\theoremstyle{plain}
\newtheorem{theor}{Theorem}
\theoremstyle{remark}
\newtheorem{rem}{Remark}
\theoremstyle{plain}
\newtheorem{prop}[theor]{Proposition}
\newtheorem{cor}[theor]{Corollary}
\newtheorem{lemma}[theor]{Lemma}
\def\R{{\mathbb R}}
\def\Prob{{\mathbb P}}
\def\N{{\mathbb N}}
\def\dist{{\rm d}}
\def\Med{{\rm Med}}
\def\Exp{{\mathbb E}}
\def\Var{{\rm Var}}
\def\Vol{{\rm Vol}}
\def\Proj{{\rm Proj}}
\def\sign{{\rm sign}}
\def\spn{{\rm span}}
\def\Ell{{\mathcal E}}
\def\Id{{\rm Id}}
\def\Lip{{\rm Lip}}
\def\grad{{\rm grad}}
\def\Event{{\mathcal E}}
\title{Superconcentration, and randomized Dvoretzky's theorem for spaces with $1$-unconditional bases}
\author{Konstantin Tikhomirov\footnote{Department of Mathematics, Princeton University; email: kt12@math.princeton.edu}}
\begin{document}

\maketitle

\begin{abstract}
Let $n$ be a sufficiently large natural number and let $B$ be an origin-symmetric convex body in $\R^n$
{\it in the $\ell$-position}, and such that the space $(\R^n,\|\cdot\|_B)$
admits a $1$-unconditional basis.
Then for any $\varepsilon\in(0,1/2]$, and for
random $c\varepsilon\log n/\log\frac{1}{\varepsilon}$--dimensional subspace $E$ distributed according to the rotation-invariant
(Haar) measure, the section $B\cap E$ is 
$(1+\varepsilon)$--Euclidean with probability close to one. This shows that the ``worst-case'' dependence on
$\varepsilon$ in the randomized Dvoretzky
theorem in the $\ell$-position is significantly better than in John's position.
It is a previously unexplored feature, which has strong connections with
the concept of superconcentration introduced by S.~Chatterjee. In fact, our main result follows from the next
theorem:
Let $B$ be as before and assume additionally that $B$ has a smooth boundary and
$\Exp_{\gamma_n}\|\cdot\|_B\leq n^c\,\Exp_{\gamma_n}\big\|\grad_B(\cdot)\big\|_2$
for a small universal constant $c>0$, where $\grad_B(\cdot)$ is the gradient of $\|\cdot\|_B$
and $\gamma_n$ is the standard Gaussian measure in $\R^n$.
Then for any $p\in[1,c\log n]$ the $p$-th power of the norm $\|\cdot\|_B^p$ is $\frac{C}{\log n}$--superconcentrated
in the Gauss space.
\end{abstract}

{\small
{\bf Keywords:} Dvoretzky's theorem, almost Euclidean sections, superconcentration, $\ell$-position

{\bf MSC 2010:} 46B06, 46B09, 52A21
}

\section{Introduction}

The term {\it superconcentration} was introduced by S.~Chatterjee to describe a situation when
the size of typical fluctuations of a function on a probability space is much smaller than
the bound provided by ``classical'' concentration inequalities \cite{Chatterjee}.
In this note, we are concerned with applications of the superconcentration phenomenon in asymptotic geometric analysis;
specifically, in the problem of finding large $(1+\varepsilon)$-Euclidean sections of convex bodies.
On the probabilistic level, we derive a concentration inequality for convex positively homogeneous functions in the Gauss space
satisfying some additional assumptions.
On the geometric level, we show that John's position may be a ``bad'' choice
as far as dependence of dimension on $\varepsilon$ is concerned
in the {\it randomized} Dvoretzky's theorem, and, at least for unit balls of normed spaces with a $1$-unconditional basis,
the {\it $\ell$-position} allows a substantially better bound on the dimension.

The theorem of A.~Dvoretzky \cite{Dvoretzky} asserts that for arbitrary fixed $k\in\N$ and
$\varepsilon>0$, every symmetric convex body of a large enough dimension contains
a $(1+\varepsilon)$-Euclidean $k$-dimensional section.
A proof of the theorem based on the concentration of measure was proposed by V.~Milman \cite{Milman 1971}.
In view of results of Y.~Gordon \cite{Gordon 1985} and G.~Schechtman \cite{Schechtman 1989},
who improved dependence of the dimension $k$ on $\varepsilon$,
the theorem of Milman reads: If $B$ is an origin-symmetric convex body in $\R^n$ with the Minkowski functional
$\|\cdot\|_B$ and
$$k(B):=\bigg(\frac{\Exp\|G\|_B}{\Lip(\|\cdot\|_B)}\bigg)^2$$
(where $G$ is the standard Gaussian vector in $\R^n$ and $\Lip(\|\cdot\|_B)$
is the Lipschitz constant of $\|\cdot\|_B$), then for any $\varepsilon\in(0,1]$
and any natural $k\leq c\varepsilon^2 k(B)$ the random $k$-dimensional subspace $E\subset \R^n$
uniformly distributed according to the rotation-invariant measure, cuts a $(1+\varepsilon)$-Euclidean section
$B\cap E$ with probability close to one.
The quantity $k(B)$ is often called the critical, or Dvoretzky's, dimension.
The last statement asserts that {\it most} sections of $B$
(with respect to the rotation-invariant probability measure) of the given dimension are
$(1+\varepsilon)$-Euclidean; in our note this version of Dvoretzky's theorem is called ``randomized''
(as opposed to ``existential'').
Let us note that Dvoretzky's theorem as well as numerous questions around it are covered in several monographs and surveys;
see, in particular, \cite{MS 1986, Pisier, Schechtman 2013, AGM 2015}.

The Dvoretzky--Rogers lemma implies that for any convex body $B$ in {\it John's position}
(i.e.\ such that the ellipsoid of maximal volume contained inside $B$
is the unit Euclidean ball) one has $\Exp\|G\|_{B}\geq c\sqrt{\log n}$,
whence $k(B)\geq c^2\log n$ for a universal constant $c>0$.
This yields
\begin{theor}
[{{}\bf Randomized Dvoretzky's theorem in John's position},
\cite{Milman 1971,Gordon 1985, Schechtman 1989}]\label{th: random John}
Let $B$ be an origin-symmetric convex body in $\R^n$ in John's position, and let $\varepsilon\in(0,1]$
and $k\leq c'\varepsilon^2\log n$.
Then, for random $k$-dimensional subspace $E$ uniformly distributed according to the rotation-invariant (Haar) measure, one has
$$\Prob\big\{B\cap E\mbox{ is $(1+\varepsilon)$-Euclidean}\big\}\geq 1-2n^{-c'\varepsilon^2}.$$
Here, $c'>0$ is a universal constant.
\end{theor}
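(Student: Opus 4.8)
The plan is to run the Gaussian proof of Dvoretzky's theorem and to invoke Gordon's comparison inequalities, which is precisely the ingredient that sharpens Milman's original admissible dimension $c\varepsilon^{2}/\log\frac1\varepsilon$ to the stated $c'\varepsilon^{2}$. First I would record the two uses of the John-position hypothesis. Since $B$ is in John's position, $B_{2}^{n}\subseteq B$, hence $\|x\|_{B}\le\|x\|_{2}$ for all $x$, so the Euclidean Lipschitz constant of $\|\cdot\|_{B}$ equals $b:=\max_{\theta\in S^{n-1}}\|\theta\|_{B}\le1$; in particular $k(B)=(M/b)^{2}\ge M^{2}$, where $M:=\Exp\|G\|_{B}$. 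By the Dvoretzky--Rogers lemma recalled above, $M\ge c_{0}\sqrt{\log n}$, so $k(B)\ge c_{0}^{2}\log n$. Next I would make the random subspace concrete: a Haar-random $k$-dimensional $E$ has the distribution of the column space of an $n\times k$ matrix $\Gamma$ with i.i.d.\ $N(0,1)$ entries, and then $S_{E}=\{\Gamma\theta/\|\Gamma\theta\|_{2}:\theta\in S^{k-1}\}$ is the Euclidean unit sphere of $E$. Since $B\cap E$ is $(1+\varepsilon)$-Euclidean as soon as $\sup_{\phi\in S_{E}}\|\phi\|_{B}\le(1+\varepsilon)\inf_{\phi\in S_{E}}\|\phi\|_{B}$, the problem reduces to controlling the four random quantities $\inf_{\theta\in S^{k-1}}\|\Gamma\theta\|_{B}$, $\sup_{\theta\in S^{k-1}}\|\Gamma\theta\|_{B}$, $\inf_{\theta}\|\Gamma\theta\|_{2}$ and $\sup_{\theta}\|\Gamma\theta\|_{2}$.

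For the expectations I would write $\|\Gamma\theta\|_{B}=\sup_{y\in B^{\circ}}\langle\Gamma\theta,y\rangle$, exhibiting $(\theta,y)\mapsto\langle\Gamma\theta,y\rangle$ as a Gaussian process on $S^{k-1}\times B^{\circ}$. Chevet's inequality then gives $\Exp\sup_{\theta\in S^{k-1}}\|\Gamma\theta\|_{B}\le M+b\sqrt{k}$, and Gordon's Gaussian min--max comparison inequality gives the matching lower bound $\Exp\inf_{\theta\in S^{k-1}}\|\Gamma\theta\|_{B}\ge M-b\sqrt{k}$; here one uses $\Exp\|h\|_{B}=M$ and $\sup_{y\in B^{\circ}}\|y\|_{2}=b$ for $h$ a standard Gaussian in $\R^{n}$, and $\Exp\|g\|_{2}\le\sqrt{k}$ for $g$ a standard Gaussian in $\R^{k}$. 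Running the same two inequalities with $B^{\circ}$ replaced by the Euclidean ball yields $\Exp\inf_{\theta}\|\Gamma\theta\|_{2}\ge\sqrt{n}-\sqrt{k}-1$ and $\Exp\sup_{\theta}\|\Gamma\theta\|_{2}\le\sqrt{n}+\sqrt{k}$.

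Each of the four quantities, as a function of $\Gamma\in\R^{n\times k}$ with the Hilbert--Schmidt norm, is Lipschitz with constant at most $b\le1$ (for the two built from $\|\cdot\|_{B}$) or at most $1$ (for the two built from $\|\cdot\|_{2}$) — being an infimum or supremum over $\theta$ of maps with that Lipschitz constant — so by Gaussian concentration each deviates from its mean by more than $t$ with probability at most $2e^{-t^{2}/2}$. Now I would fix the universal constant $c'$ small enough that $k\le c'\varepsilon^{2}\log n$ forces $b\sqrt{k}\le\frac{\varepsilon}{20}M$ — possible because $(M/b)^{2}=k(B)\ge c_{0}^{2}\log n$ — and apply the deviation bound with $t=\frac{\varepsilon}{20}M$ for the $\|\cdot\|_{B}$-quantities and $t=\frac{\varepsilon}{20}\sqrt{n}$ for the $\|\cdot\|_{2}$-quantities (here $\sqrt{k}\ll\sqrt{n}$). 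On the intersection of the four events one gets, for every $\phi=\Gamma\theta/\|\Gamma\theta\|_{2}\in S_{E}$,
\[
\|\phi\|_{B}=\frac{\|\Gamma\theta\|_{B}}{\|\Gamma\theta\|_{2}}\in\Bigl[\frac{(1-\varepsilon/10)M}{(1+\varepsilon/10)\sqrt{n}},\ \frac{(1+\varepsilon/10)M}{(1-\varepsilon/10)\sqrt{n}}\Bigr],
\]
so $\sup_{\phi\in S_{E}}\|\phi\|_{B}\le\bigl(\tfrac{1+\varepsilon/10}{1-\varepsilon/10}\bigr)^{2}\inf_{\phi\in S_{E}}\|\phi\|_{B}\le(1+\varepsilon)\inf_{\phi\in S_{E}}\|\phi\|_{B}$ for $\varepsilon\in(0,1]$, i.e.\ $B\cap E$ lies between two concentric Euclidean balls of radii ratio at most $1+\varepsilon$. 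It remains to estimate the probability of the complement: since $t^{2}/2\ge\frac{c_{0}^{2}}{800}\varepsilon^{2}\log n$ for the first two events and $t^{2}/2$ is far larger for the other two, the complement has probability at most $C_{1}n^{-c_{1}\varepsilon^{2}}$ for universal $C_{1},c_{1}>0$; and since there is nothing to prove unless $c'\varepsilon^{2}\log n\ge1$, shrinking $c'$ once more makes this at most $2n^{-c'\varepsilon^{2}}$.

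The only step that is not classical is the lower bound $\Exp\inf_{\theta\in S^{k-1}}\|\Gamma\theta\|_{B}\ge M-b\sqrt{k}$, and this is where the gain over the elementary approach sits; I expect it to be the crux. Estimating $\inf_{\theta\in S^{k-1}}\|\Gamma\theta\|_{B}$ by a union bound over an $\varepsilon$-net of $S^{k-1}$ loses a factor $\log\frac1\varepsilon$ in the admissible dimension, and for bodies such as $\ell_{\infty}^{n}$, for which $b/M$ is polynomial in $n$, the net-to-sphere approximation costs a further factor $\log n$, making the net bound genuinely weaker than the statement; Gordon's theorem instead controls the infimum over the whole sphere at once, with no such loss. (Schechtman's approach reaches the same bound by different, more elementary means.) Everything else used above — Chevet's inequality, Gaussian Lipschitz concentration, and the Dvoretzky--Rogers lower bound on $M$ — is standard.
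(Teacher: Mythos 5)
Your argument is correct and is exactly the Milman--Gordon--Schechtman proof that the paper invokes by citation: the paper itself gives no proof of this theorem, deriving it from the general statement ``$k\le c\varepsilon^2 k(B)$ suffices'' (whose sharp $\varepsilon^2$ dependence rests on Gordon's comparison inequality, precisely the step you single out as the crux) combined with the Dvoretzky--Rogers bound $k(B)\ge c^2\log n$ in John's position. All the quantitative steps (Chevet, Gordon, Gaussian Lipschitz concentration in the Hilbert--Schmidt metric, and the final choice of $c'$) check out.
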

In fact, in the above theorem ``$(1+\varepsilon)$-Euclidean'' can be replaced with a stronger notion
of {\it $(1+\varepsilon)$-spherical} which we define as a section
$B\cap E'$ such that $\lambda B_2^n\cap E'\subset B\cap E'\subset (1+\varepsilon)\lambda B_2^n\cap E'$,
where $\lambda$ is some positive real number and $B_2^n$ is the standard Euclidean ball in $\R^n$.

It is not difficult to show that dependence on $\varepsilon$ in the above theorem cannot be improved.
The following statement can be verified by elementary geometric arguments combined with basic probability
(for completeness, we give a proof in Section~\ref{s:john}):
\begin{prop}[Optimality of randomized Dvoretzky's theorem in John's position]\label{p: John poor concentration}
There are universal constants $c,C>0$ and $n_0\in\N$ with the following property:
For any $n\geq n_0$ there is an origin-symmetric convex body $B\subset\R^n$ in John's position
(and, moreover, the standard basis in $\R^n$ is $1$-unconditional with respect to the norm $\|\cdot\|_B$)
such that for all $\varepsilon\in(0,c]$ and $k\geq C\max(\varepsilon^2\log n,1)$,
the random $k$-dimensional subspace $E$ uniformly distributed according to the rotation-invariant (Haar) measure,
satisfies
$$\Prob\big\{B\cap E\mbox{ is $(1+\varepsilon)$-Euclidean}\big\}\leq\frac{1}{2}.$$
\end{prop}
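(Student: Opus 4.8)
The plan is to produce one ``rigid'' body $B$ witnessing the statement and then bound from below the Banach--Mazur distance of its random sections to $\ell_2^k$. Two constraints pin down the type of $B$ we need. First, it must be in John's position, so $B_2^n\subseteq B$, hence $\Lip(\|\cdot\|_B)\le 1$; and its critical dimension must satisfy $k(B)\asymp\log n$ --- it is $\gtrsim\log n$ automatically in John's position, while if it were $\gg\log n$ the isomorphic Dvoretzky theorem would already produce $(1+\varepsilon)$-Euclidean sections of dimension $\gg\varepsilon^2\log n$. Second, and crucially, $\|\cdot\|_B$ must \emph{not} be superconcentrated: its fluctuations in the Gauss space should be of the maximal order permitted by Gaussian concentration and, more importantly, ``delocalized'', so that on a typical $k$-plane the inherited oscillation of $\|\cdot\|_B$ cannot be flattened by a linear change of coordinates. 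A natural candidate is a block body, e.g.\ a product $B=\prod_{j}(\sqrt m\,B_1^m)$ of low-dimensional $\ell_1$-balls with $m=\lceil\log n\rceil$: each factor has a Euclidean John ellipsoid and the factors are balanced, so $B$ is in John's position, its standard basis is $1$-unconditional, $\Lip(\|\cdot\|_B)=1$, and $\Exp_{\gamma_n}\|\cdot\|_B\asymp\sqrt{\log n}$, whence $k(B)\asymp\log n$.

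Next comes the standard reduction. Writing an orthonormal basis $w_1,\dots,w_k$ of a Haar-random $k$-subspace $E$ as (essentially) independent $N(0,\tfrac1n I_n)$ vectors, the coordinates of $x=\sum_l y_l w_l$ become $x_i=\langle\rho_i,y\rangle$ with $\rho_i$ behaving like independent $N(0,\tfrac1n I_k)$ vectors, and ``$B\cap E$ is $(1+\varepsilon)$-Euclidean'' translates to: the function $y\mapsto\|\sum_l y_l w_l\|_B$ on the sphere is, after precomposition with some linear map of $\mathbb R^k$, within a factor $1+\varepsilon$ of $|\cdot|$. For the block body this function is (a rescaling of) $\max_j m^{-1/2}\|\Gamma^{(j)}y\|_1$ with $\Gamma^{(j)}$ independent Gaussian blocks. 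Each summand concentrates near a constant $\asymp\sqrt{\log n}$ but carries a genuine oscillation of size $\Theta(\sqrt k)$ as $y$ ranges over the sphere (the $\ell_1$-analogue of singular-value spread), so its \emph{relative} oscillation is $\asymp\sqrt{k/\log n}$; the point is that these oscillations are independent across the $\asymp n/\log n$ blocks and point in unrelated directions, so no single linear map can remove them simultaneously. Turning this into a lower bound on $d(B\cap E,\ell_2^k)$ --- which requires quantifying over \emph{all} ellipsoids --- is cleanest via a volumetric inequality of the form $d(B\cap E,\ell_2^k)^k\ge\Vol(B\cap E)/\Vol(\mathcal E)$ for the maximal-volume inscribed ellipsoid $\mathcal E$, combined with a lower bound on $\Vol(B\cap E)$ and an upper bound on $\Vol(\mathcal E)$ coming from the ``spiky'' shape of the section. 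One concludes that with probability $\ge\tfrac12$ one has $d(B\cap E,\ell_2^k)\ge 1+c\sqrt{k/\log n}$; taking $k\ge C\varepsilon^2\log n$ gives $d(B\cap E,\ell_2^k)\ge 1+c\sqrt{C}\,\varepsilon>1+\varepsilon$, and the same estimate with $k\ge C$ and $\varepsilon$ small (so that $\varepsilon\le c\sqrt{k/\log n}$) handles the remaining range.

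The main obstacle is exactly the step that separates this from the \emph{upper} bound in Dvoretzky's theorem: showing that the oscillation persists under an arbitrary linear change of coordinates on $E$. The round inscribed and circumscribed balls only yield an \emph{upper} bound on the distance; the lower bound forces one to use that the fluctuations of $\|\cdot\|_B$ on the random subspace are genuinely high-dimensional and unstructured --- i.e.\ that, in John's position, $\|\cdot\|_B$ fails to superconcentrate in the sharp, direction-sensitive way exploited elsewhere in the paper. The tension is that making $k(B)\asymp\log n$ pushes $B$ toward a ``sharp, max-type'' norm, which tends to superconcentrate (this is why the cube itself is \emph{not} a witness); the construction and the estimate must be arranged so that the block oscillations survive the ``max'' without self-averaging. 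Everything else --- the passage from Haar-random subspaces to Gaussian models, the Gaussian anticoncentration inside each block, the verification that $B$ is in John's position with a $1$-unconditional basis, and the volume estimates --- is routine.
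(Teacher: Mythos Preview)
Your proposal is a sketch, not a proof, and the gap you yourself flag is real and not bridged by what you wrote. The step ``show that the oscillation of $\|\cdot\|_B$ on a random $k$-plane cannot be removed by \emph{any} linear change of coordinates'' is the entire content of the proposition, and your volumetric suggestion $d(B\cap E,\ell_2^k)^k\ge \Vol(B\cap E)/\Vol(\mathcal E_{\mathrm{John}})$ just shifts the difficulty: upper-bounding the volume of the John ellipsoid of a random section of your $\ell_1$-block body is at least as hard as the original distance lower bound, and you give no mechanism for it. Moreover, to extract $d\ge 1+\varepsilon$ from the volume ratio you would need $\Vol(B\cap E)/\Vol(\mathcal E_{\mathrm{John}})\ge (1+\varepsilon)^k$, and nothing in your outline explains why the ``spikiness'' of the section should force that. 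It is also not clear that the body $\prod_j(\sqrt m\,B_1^m)$ with $m\asymp\log n$ is in fact a witness: each block functional $y\mapsto m^{-1/2}\|\Gamma^{(j)}y\|_1$ is itself $(1+O(\sqrt{k/m}))$-constant on $S^{k-1}$ by the $\ell_1$ version of Dvoretzky, and you would still need to argue that the \emph{maximum} over $j$ of such nearly-round functionals is genuinely $(1+c\varepsilon)$-far from every ellipsoid, not merely oscillating by that amount.

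The paper takes a completely different route that sidesteps the ``all ellipsoids'' quantifier. The witness body is $B=B'\cap B''$, where $B'$ is an $(n-m)$-dimensional cube-cylinder and $B''$ is an $m$-dimensional Euclidean ball-cylinder with $m\asymp\log n$. On a random $k$-subspace $E$, the section $B'\cap E$ is, with high probability, \emph{within $1\pm\varepsilon$ of a fixed round ball} (by the known superconcentration for the cube), while $B''\cap E$ is an ellipsoid whose extreme semi-axes are governed by the extreme singular values of an $m\times k$ Gaussian matrix and hence differ by a factor $1\pm c\sqrt{k/m}$. Thus $B\cap E$ is, up to a $(1+\varepsilon)$ factor, the intersection of a Euclidean ball with an ellipsoid of comparable radius and eccentricity $\asymp\sqrt{k/m}$. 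The Banach--Mazur lower bound then follows from an elementary \emph{two-dimensional} computation (ball $\cap$ ellipse in the plane spanned by the longest and shortest semi-axes), which \emph{does} quantify over all linear maps but is trivial in dimension two. The trick, in short, is to engineer the section to be ``round background plus one controlled ellipsoidal distortion'', so that the hard quantifier collapses to a planimetric identity. Your $\ell_1$-block construction has no analogous reduction, and the argument you propose does not supply one.
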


Although Theorem~\ref{th: random John} is sharp, it raises the question whether John's position is a good
choice for generating random almost Euclidean sections, or there is another {\it canonical} position
which yields a better dependence on $\varepsilon$.
As an example, let us note that in a recent paper \cite{PV} 
dealing with Dvoretzky's theorem for subspaces of $L_p$,
G.~Paouris and P.~Valettas used a position of the unit ball other than John's.

\medskip

For an origin-symmetric convex body $B$ in $\R^n$ and a linear operator $U:\R^n\to\R^n$
define
$$\ell(B,U):=\bigg(\int \|U(x)\|_B^2\, d\gamma_n(x)\bigg)^{1/2}=\big(\Exp\, \|U(G)\|_B^2\big)^{1/2},$$
where $\|\cdot\|_B$ is the Minkowski functional of $B$ and $G$ is the standard Gaussian vector in $\R^n$.
We say that $B$ is {\it in the $\ell$-position} if $\ell(B,\Id_n)=1$ and
$$1=\det \Id_n=\sup\big\{|\det U|:\,U\in\R^{n\times n},\,\ell(B,U)\leq 1\big\}.$$
It is not difficult to check that for any body $B\subset\R^n$ and a linear operator $U_0$
with $\ell(B,U_0)= 1$
and such that
$$|\det U_0|=\sup\big\{|\det U|:\,U\in\R^{n\times n},\,\ell(B,U)\leq 1\big\},$$
the image $U_0^{-1}(B)$ is in the $\ell$-position.
The importance of the $\ell$-position in asymptotic geometric analysis was
revealed by T.~Figiel and N.~Tomczak-Jaegermann in \cite{FT}
(see also \cite[\S~12]{Tomczak}, \cite[Chapter~3]{Pisier}, \cite[Chapter~6]{AGM 2015}).

The main result of our note is the following theorem:
\begin{theor}[{{}\bf Randomized Dvoretzky's theorem in the $\ell$-position}]\label{th: random ell}
Let $B$ be an origin-symmetric convex body in $\R^n$ in the $\ell$-position,
and such that the space $(\R^n,\|\cdot\|_B)$ has a $1$-unconditional basis.
Further, let $\varepsilon\in(0,1/2]$
and $k\leq c\varepsilon\log n/\log\frac{1}{\varepsilon}$.
Then for random $k$-dimensional subspace $E\subset\R^n$ uniformly distributed
according to the rotation-invariant (Haar) measure, one has
$$\Prob\big\{B\cap E\mbox{ is $(1+\varepsilon)$--spherical}\big\}\geq 1-2n^{-c\varepsilon},$$
with the notion ``$(1+\varepsilon)$--spherical'' defined above. Here, $c>0$ is a universal constant.
\end{theor}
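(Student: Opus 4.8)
\emph{Reduction.}
The plan is to deduce Theorem~\ref{th: random ell} from the superconcentration theorem stated in the abstract, in three movements. Since ``$(1+\varepsilon)$--spherical'' is an open condition on a subspace, I would first pass to a body $\widetilde B$ with $C^\infty$ boundary, $\varepsilon$-close to $B$ in the Banach--Mazur distance, still origin-symmetric, still having the standard basis $1$-unconditional, and (after a negligible linear adjustment) still in the $\ell$-position; it then suffices to prove the statement for $\widetilde B$, so I may assume $B$ is smooth. Next I would dispose of the bodies whose critical dimension $k(B)=(\Exp\|G\|_B/\Lip(\|\cdot\|_B))^2$ is already large: if $k(B)\ge C(\log n)^2$, the classical randomized Dvoretzky theorem quoted in the Introduction, in the form $k\le c'\varepsilon^2 k(B)$, produces a $(1+\varepsilon)$--spherical section of every dimension $k\le c\varepsilon\log n/\log\frac1\varepsilon$, since whenever that bound is not vacuous one has $\varepsilon\gtrsim1/\log n$ and hence $\varepsilon\log n/\log\frac1\varepsilon\le c'\varepsilon^2 k(B)$. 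So I may also assume $k(B)$ is not too large.

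\emph{Verifying superconcentration.}
For the remaining bodies I would check the hypothesis $\Exp_{\gamma_n}\|\cdot\|_B\le n^{c}\Exp_{\gamma_n}\|\grad_B(\cdot)\|_2$ of the superconcentration theorem (and, where this comparison fails, establish the superconcentration of $\|\cdot\|_B^p$ by the same tools directly). The crude bound here is Euler's identity $\langle x,\grad_B(x)\rangle=\|x\|_B$ together with Cauchy--Schwarz, giving $\|\grad_B(x)\|_2\ge\|x\|_B/\|x\|_2$ and hence only $\Exp\|\cdot\|_B\le\sqrt n\,\Exp\|\grad_B(\cdot)\|_2$ --- a factor $\sqrt n$, attained by the Euclidean ball, which is nowhere near enough. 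To gain a small power of $n$ I would use (i) the $1$-unconditionality, i.e.\ that $\partial_j\|x\|_B$ has the sign of $x_j$ for each $j$, and (ii) the Euler--Lagrange equations defining the $\ell$-position, which after Gaussian integration by parts read $\Exp\big[\mathrm{Hess}(\|G\|_B^2)\big]=\tfrac2n\,\Id_n$, equivalently $\Exp\|\grad_B(G)\|_2^2=1-\Exp\big[\|G\|_B\,\Delta\|G\|_B\big]$; combined with the assumption that $\Lip(\|\cdot\|_B)$ is large relative to $\Exp\|G\|_B$ (which in the $\ell$-position, where $\Exp\|G\|_B\le1$
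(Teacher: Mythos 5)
Your proposal breaks off before reaching the substance of the argument, and the part that is written already contains a step that does not work. The dichotomy you set up is the wrong one. You propose to split according to whether $k(B)$ is large, and in the remaining case to derive the hypothesis $\Exp\|G\|_B\le n^{c}\,\Exp\|\grad_B(G)\|_2$ from ``$\Lip(\|\cdot\|_B)$ large relative to $\Exp\|G\|_B$.'' But $\Lip(\|\cdot\|_B)=\sup_x\|\grad_B(x)\|_2$ being large says nothing about the \emph{average} $\Exp\|\grad_B(G)\|_2$: the gradient can be large on a set of tiny Gaussian measure. The paper's dichotomy is on $\Exp\|\grad_B(G)\|_2$ itself (Lemma~\ref{l: aux 09}): when $\Exp\|G\|_B> n^{c}\,\Exp\|\grad_B(G)\|_2$ one does \emph{not} prove superconcentration --- instead Markov's inequality shows $\|\grad_B(G)\|_2$ is small outside an event of probability $n^{-c/2}$, and the Poincar\'e inequality together with the pointwise bound of Lemma~\ref{l: grad trivial upper bound} and reverse H\"older (Lemma~\ref{l: aux 32}) gives a variance bound $\Var(\|G\|_B^p)\le n^{-c/16}\Exp\|G\|_B^{2p}$ that is even stronger than what superconcentration would give. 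Your fallback clause ``establish the superconcentration by the same tools directly'' is precisely the gap: in that regime superconcentration relative to $\Exp\|\grad\|_2^2$ need not hold at the claimed rate, and it is not what is needed.

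Beyond that, everything that makes the theorem work is missing. (i) The proof of superconcentration itself: Talagrand's $L_1$--$L_2$ bound applied to $\|\cdot\|_B^p$, with each coordinate of the gradient split into a ``flat'' part (controlled via the $\ell_1$ bound $\Exp\|\grad_B(G)\|_1^p\le(\pi/2)^{p/2}\Exp\|G\|_B^p$ coming from unconditionality, Lemma~\ref{l: l1 grad}) and a ``spiky'' part whose support has probability $\le n^{-c''}$ by the balancing identity $\Exp(\|G\|_B\langle\grad_B(G),e_i\rangle\langle G,e_i\rangle)=\frac1n$ combined with monotonicity/positive correlation (Lemmas~\ref{l: ell equations}, \ref{l: grad is monotone}); this small support is what produces the logarithmic gain in the $L_1$--$L_2$ denominator. (ii) The lower bound $k(B)\ge c\log n$ in the $\ell$-position (Proposition~\ref{p: ell lipschitz}), which is needed throughout and is itself proved from the balancing identities, not assumed. (iii) The passage from the variance bound on $\|G\|_B^p$ with $p\asymp\log n$ to the deviation inequality $\Prob\{|\|G\|_B-\Med\|G\|_B|\ge\varepsilon\Med\|G\|_B\}\le 2n^{-c\varepsilon}$: Chebyshev for the upper tail, and --- crucially, and nowhere hinted at in your sketch --- the Cordero-Erausquin--Fradelizi--Maurey log-concavity theorem for the lower tail. (iv) The final net argument. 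Your opening reductions (smoothing via Corollary~\ref{c:smooth}, and discarding the case of large $k(B)$ via Theorem~\ref{th: random John}) are fine, but they are the easy part.
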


A version of the above statement was known in the particular case $B=[-1,1]^n$ \cite{Schechtman 2007};
see also \cite{PVZ} where the randomized Dvoretzky's theorem for $\ell_p^n$--balls is studied.
The dependence on $\varepsilon$ in Theorem~\ref{th: random ell} is sharp
in a sense that the majority of $C\varepsilon\log n/\log\frac{1}{\varepsilon}$--dimensional sections of
the standard cube are not $(1+\varepsilon)$--spherical \cite{T 2014}.
Let us emphasize once more that the above statement {\it does not} hold in general if the $\ell$-position is replaced with John's.
We conjecture that the assertion of Theorem~\ref{th: random ell} is true without the assumption that the space $(\R^n,\|\cdot\|_B)$
admits a $1$-unconditional basis.

In \cite{Schechtman 2006} G.~Schechtman
proved, by combining random and deterministic arguments with a result of N.~Alon and V.~Milman \cite{AM 1983}, that any origin-symmetric convex body $B$ contains 
a $c\varepsilon\log n/\log^2\frac{1}{\varepsilon}$--dimensional $(1+\varepsilon)$-Euclidean section.
However, in contrast with Theorem~\ref{th: random ell},
the result of \cite{Schechtman 2006} is existential in a sense that it does not provide a canonical position for a convex body in which
most of its $c\varepsilon\log n/\log^2\frac{1}{\varepsilon}$--dimensional sections
are $(1+\varepsilon)$-Euclidean.

\bigskip

Our proof of Theorem~\ref{th: random ell} is based on the following dichotomy:
Given a convex body $B$ satisfying the assumptions of the theorem, either the
expectation of the length of the gradient of the norm $\Exp_{\gamma_n}\|\grad_B(\cdot)\|_2$ is very small
compared to $\Exp_{\gamma_n}\|\cdot\|_B$
(in which case a simple analysis shows that the assertion of the theorem is true)
or the gradient is relatively ``large'' in which case we involve the superconcentration.
Of course, only the second case is of interest.

Let $n$ be a natural number and let $\gamma_n$ be the standard Gaussian measure in $\R^n$.
For any sufficiently smooth real-valued function $f$ in $\R^n$ one has
$$\Var_{\gamma_n}(f):=\int f(x)^2\, d\gamma_n(x)-\bigg(\int f(x)\, d\gamma_n(x)\bigg)^2\leq 
\int \|\grad f(x)\|_2^2\,d\gamma_n(x),$$
where $\grad f$ is the gradient of $f$ ({{}\it the Poincar\'e inequality}).
A function $f$ in $\R^n$ is called {\it $\delta$-superconcentrated} (for some $\delta<1$) if
$$\Var_{\gamma_n}(f)\leq\delta \int \|\grad f(x)\|_2^2\,d\gamma_n(x).$$
The setting of actual interest involves a sequence of functions (indexed by the dimension $n$)
such that $\delta=\delta(n)$ tends to zero with $n\to\infty$.
We refer to \cite{Chatterjee} for definition of superconcentration in a more general context,
its relation to other properties (called ``chaos'' and ``multiple valleys''),
as well as for results dealing with specific probabilistic models.
Theorem~\ref{th: random ell} of this note follows from the next result.
\begin{theor}\label{th: superconcentration}
There are universal constants $c,C>0$ and $n_0\in\N$ with the following property.
Let $n\geq n_0$ and let $B$ be an origin-symmetric convex body in $\R^n$ in the $\ell$-position,
with a smooth boundary, and such that the space $(\R^n,\|\cdot\|_B)$ admits a $1$-unconditional basis.
Further, assume that $\Exp\|G\|_B\leq n^c\,\Exp\|\grad_B(G)\|_2$, where $G$ denotes the standard Gaussian
vector in $\R^n$ and $\grad_B(\cdot)$ is the gradient of the norm $\|\cdot\|_B$.
Then for any $p\in[1,c\log n]$ the function $\|\cdot\|_B^p$ is $\frac{C}{\log n}$--superconcentrated
in the Gauss space.
\end{theor}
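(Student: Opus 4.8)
The natural route is through the Ornstein--Uhlenbeck semigroup and the spectral/covariance representation of the variance that underlies all of Chatterjee's superconcentration estimates. Recall that for a smooth $f$ in Gauss space one has the identity
$$\Var_{\gamma_n}(f)=\int_0^\infty e^{-t}\,\Exp\big\langle \grad f(G),\,\grad f(G_t)\big\rangle\,dt,$$
where $(G,G_t)$ is a Gaussian pair with covariance $e^{-t}\Id_n$. The plan is to apply this with $f=\|\cdot\|_B^p$, so that $\grad f(x)=p\,\|x\|_B^{p-1}\grad_B(x)$, and to show that the right-hand side is smaller than $\int \|\grad f\|_2^2\,d\gamma_n$ by a factor of order $1/\log n$. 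The point is that for small $t$ the correlation term is essentially $\int\|\grad f\|_2^2$, while for $t$ bounded away from $0$ the near-independence of $G$ and $G_t$ forces a big gain; the work is to quantify both regimes and match the crossover to $t\asymp 1/\log n$.

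\medskip

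The steps I would carry out, in order:

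(1) \emph{Reduce to controlling $L_t f := e^{-t}\,\Exp\langle \grad f(G),\grad f(G_t)\rangle$ uniformly.} Split $\int_0^\infty e^{-t}L_t\,dt = \int_0^{t_0}+\int_{t_0}^\infty$ with $t_0\asymp 1/\log n$. On $[0,t_0]$ bound $L_t\le \big(\Exp\|\grad f(G)\|_2^2\big)$ crudely by Cauchy--Schwarz and stationarity, contributing at most $t_0\,\Exp\|\grad f\|_2^2$, which is already the desired $O(1/\log n)$ fraction.

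(2) \emph{Gain a factor on $[t_0,\infty)$ via hypercontractivity / the $1$-unconditional structure.} Here is where the hypotheses enter. Decompose $\grad_B$ coordinate-wise: by $1$-unconditionality, $\partial_i\|x\|_B$ has the same sign pattern as $x_i$ and, crucially, $x\mapsto |\partial_i\|x\|_B|$ is a function that does not depend on the sign of $x_i$ in a way that lets one peel off one Rademacher-type factor per coordinate. The quantity $\Exp\langle\grad f(G),\grad f(G_t)\rangle = p^2\sum_i \Exp\big[\|G\|_B^{p-1}\|G_t\|_B^{p-1}\,\partial_i\|G\|_B\,\partial_i\|G_t\|_B\big]$ should, after conditioning on $|G_i|$'s and using that $\partial_i\|\cdot\|_B$ is odd in the $i$-th variable, pick up a factor $e^{-t}$ from each relevant coordinate; combined with the number of ``influential'' coordinates being large (which is exactly what the $\ell$-position guarantees, via $\Exp\|\grad_B\|_2^2$ not being concentrated on few coordinates), one gets $L_t \le e^{-c t\log n}\,\Exp\|\grad f\|_2^2$ or similar superexponential decay, so $\int_{t_0}^\infty e^{-t}L_t\,dt \le n^{-c}\,\Exp\|\grad f\|_2^2$.

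(3) \emph{Handle the power $p$.} The replacement of $\|\cdot\|_B$ by $\|\cdot\|_B^p$ only inserts the slowly varying factor $\|G\|_B^{p-1}\|G_t\|_B^{p-1}$. Since $p\le c\log n$, a Gaussian concentration bound for $\|\cdot\|_B$ (Lipschitz with constant $\Lip(\|\cdot\|_B)$, and by the $\ell$-position $\Exp\|G\|_B$ is not too small relative to this constant) shows $\|G\|_B^{p-1}$ is within a constant factor of its mean with overwhelming probability, so this factor does not destroy the decay estimates in (2); it only costs universal constants. One also uses here the additional hypothesis $\Exp\|G\|_B\le n^c\,\Exp\|\grad_B(G)\|_2$ to rule out the degenerate case where the gradient is tiny and the semigroup argument gives nothing — in that regime the theorem is either vacuous or handled by the ``simple analysis'' mentioned before the statement.

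(4) \emph{Assemble.} Adding the two pieces gives $\Var_{\gamma_n}(\|\cdot\|_B^p)\le \big(C/\log n\big)\,\Exp\|\grad(\|\cdot\|_B^p)\|_2^2$, which is the claim.

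\medskip

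The main obstacle, I expect, is Step~(2): making precise the claim that $1$-unconditionality together with the $\ell$-position forces the off-diagonal OU correlation $\Exp\langle\grad_B(G),\grad_B(G_t)\rangle$ to decay like $e^{-\Omega(t\log n)}$ rather than merely like $e^{-t}$. Morally this is the statement that $\grad_B$ has ``spread-out'' Hermite expansion — its mass sits at Hermite level $\gtrsim\log n$ — and the $1$-unconditional assumption is what lets one verify this by a coordinate-by-coordinate (tensorization) argument rather than having to understand the full spectral measure; the $\ell$-position is what prevents the norm from behaving like a low-dimensional norm (e.g.\ $\max$ of few coordinates) for which the Hermite mass would sit at a bounded level. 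Getting the quantitative dependence right — in particular that the exponent is a genuine multiple of $\log n$ and not, say, $\log\log n$ — is the crux, and is presumably where the constant $c$ in the hypothesis $\Exp\|G\|_B\le n^c\,\Exp\|\grad_B(G)\|_2$ must be chosen small.
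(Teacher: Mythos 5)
Your overall architecture (semigroup representation of the variance plus a hypercontractive gain off the diagonal) is morally the same engine that powers Talagrand's $L_1$--$L_2$ bound, which is what the paper invokes as a black box (Theorem~\ref{th: talagrand l1l2}). But Step~(2), which you yourself flag as the crux, is a genuine gap, and the statement you aim for there is too strong. Uniform decay $L_t\le e^{-ct\log n}\,\Exp\|\grad f\|_2^2$ for $t\ge t_0$ would require every coordinate $\partial_i f$ to have essentially no Hermite mass below level $c\log n$. That is false in general: by unconditionality $\partial_i\|\cdot\|_B$ is odd in $x_i$ and even in the other variables, so its level-one Hermite coefficient is $\Exp\big[|\partial_i\|G\|_B|\,|G_i|\big]>0$, and this can carry essentially all of the $L_2$ mass (for the Euclidean ball it does, which is exactly why $\|\cdot\|_2$ is not superconcentrated; that body is excluded only by the hypothesis $\Exp\|G\|_B\le n^c\,\Exp\|\grad_B(G)\|_2$, whose role your sketch relegates to a ``degenerate case'' in Step~(3) rather than to the heart of the matter). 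The correct, weaker statement is that the low-level Hermite mass is controlled by $\Exp|\partial_i f|$ rather than by $\|\partial_i f\|_{L_2}$ --- that is precisely the $L_1$--$L_2$ inequality --- and the entire difficulty of the theorem is to show that $\Exp|\partial_i f|$ is polynomially smaller than $\big(\Exp|\partial_i f|^2\big)^{1/2}$. Your appeal to ``the $\ell$-position guarantees the gradient is not concentrated on few coordinates'' neither proves this nor is what the $\ell$-position actually provides.

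What the paper does instead is split each $\langle\grad_B(x),e_i\rangle$ into a flat part (of size at most $n^{-1/8}\Exp\|G\|_B$) and a spiky part. The flat parts admit no $L_1$--$L_2$ gain and are instead disposed of by the unconditionality bound $\Exp\|\grad_B(G)\|_1^q\le(\pi/2)^{q/2}\Exp\|G\|_B^q$ (Lemma~\ref{l: l1 grad}) together with Lemma~\ref{l: norm to gradient}, which is where the hypothesis $\Exp\|G\|_B\le n^c\,\Exp\|\grad_B(G)\|_2$ enters essentially; their total contribution is $n^{-1/16}\Exp\|G\|_B^{2p}$, negligible against $\Exp\|\grad(\|\cdot\|_B^p)\|_2^2$. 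For the spiky parts the $\ell$-position enters through the balancing identity $\Exp\big(\|G\|_B\langle\grad_B(G),e_i\rangle\langle G,e_i\rangle\big)=\frac1n\Exp\|G\|_B^2$ (Lemma~\ref{l: ell equations}), which, combined with a correlation inequality based on the monotonicity of $|\partial_i\|\cdot\|_B|$ in $|x_i|$ (Lemma~\ref{l: grad is monotone}, again from unconditionality), forces $\Prob\{S_i\neq0\}\le n^{-c''}$; Cauchy--Schwarz then yields the required $L_1$-versus-$L_2$ gap and hence the $\log n$ in the denominator of Talagrand's bound. None of these quantitative inputs appears in your sketch, so as written the proposal does not close.
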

The main tool in the proof of Theorem~\ref{th: superconcentration} is Talagrand's $L_1-L_2$ bound
(see Theorem~\ref{th: talagrand l1l2}),
which we combine with some special properties of the $\ell$-position
(``balancing conditions'').
The proof is not difficult and admits various generalizations in a sense
that the $\ell$-position can be replaced with other transformations of the convex body
that provide appropriate ``balancing'' in regard to the Gaussian measure
(we'll return to this issue at the end of the paper).

\section{Notation and preliminaries}

Let $n$ be a natural number.
The canonical basis in $\R^n$ will be denoted by $e_1,e_2,\dots,e_n$ and the standard
inner product --- by $\langle \cdot,\cdot\rangle$.
Given a set of vectors $\{y_1,y_2,\dots,y_k\}$ in $\R^n$, we denote their linear span by $\spn\{y_1,y_2,\dots,y_k\}$.
For a subspace $E\subset\R^n$, $E^\perp$ is its orthogonal complement in $\R^n$
and $\Proj_E$ is the orthogonal projection operator onto $E$.
Given a boolean variable $b$, denote by $\chi_{b}$ the indicator function of $b$, so that
$\chi_{b}=1$ if and only if $b$ is true. Similarly, for an event $\Event$ denote by $\chi_{\Event}$
the indicator function of the event.

A convex body in $\R^n$ is any compact convex set with non-empty interior.
Everywhere in this note, we say that the boundary $\partial B$ of a convex body $B$ is {\it smooth}
if every point of $\partial B$ admits a unique tangent hyperplane.
Given an origin-symmetric convex body $B$, denote by $\|\cdot\|_B$ its Minkowski functional.
By some abuse of notation, for any subspace $E\subset\R^n$ we denote by $\|\cdot\|_{B\cap E}$
the Minkowski functional of $B\cap E$ considered as a convex body inside $E$.
Further, given a $k$-dimensional subspace $E\subset\R^n$, we say that the section $B\cap E$ is $L$--Euclidean
(for some $L\geq 1$) if the Banach--Mazur distance from $B\cap E$ to a $k$-dimensional Euclidean ball
is bounded from above by $L$.

A basis $y_1,y_2,\dots,y_n$ of an $n$-dimensional normed space $W$ with a norm $\|\cdot\|$ is {\it $1$-unconditional}
if $\|\sum_{i=1}^n a_i y_i\|=\|\sum_{i=1}^n \sigma_i a_i y_i\|$ for any scalars $a_1,a_2,\dots,a_n$
and any signs $\sigma_1,\sigma_2,\dots,\sigma_n\in\{-1,1\}$.
The canonical basis of $\R^n$ is $1$-unconditional with respect to a norm $\|\cdot\|$ if and only if the
unit ball of $\|\cdot\|$ is symmetric with respect to coordinate hyperplanes.

Given a real-valued function $f$ in $\R^n$, by $\Lip(f)$ we denote its Lipschitz constant.

The set of all $k$-dimensional subspaces of $\R^n$
admits a unique normalized rotation-invariant Borel measure (the Haar measure).
Whenever we speak about a random subspace in this note, we assume it is distributed according to that measure.
The standard Gaussian measure in $\R^n$ is denoted by $\gamma_n$, the standard Gaussian vector --- by $G$,
and the standard real-valued Gaussian variable --- by $g$ (or $g_i$'s when there are several of them).

Universal constants will be denoted by $C,c$, etc.\ and their value may be different on different ocasions.

\subsection{Gaussian concentration inequalities}

The next theorem (with a worse constant in the exponent) is due to G.~Pisier.

\begin{theor}[{see, in particular, \cite[Chapter~4]{Pisier}, \cite[p.~12]{LT} or \cite[Chapter~2]{Ledoux 2001}}]\label{th: gauss concentration}
Let $G$ be a standard Gaussian vector in $\R^N$ and $f:\R^N\to\R$ be a $1$-Lipschitz function.
Then
$$\Prob\big\{f(G)-\Exp\,f(G)\geq t\big\}\leq \exp(-t^2/2),\;\;\;t>0.$$
\end{theor}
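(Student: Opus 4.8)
The plan is to first establish the sub-Gaussian bound on the moment generating function,
$$\Exp\,\exp\big(\lambda(f(G)-\Exp f(G))\big)\le e^{\lambda^2/2},\qquad \lambda>0,$$
and then to apply Markov's inequality to $\exp(\lambda(f(G)-\Exp f(G)))$, which gives $\Prob\{f(G)-\Exp f(G)\ge t\}\le e^{\lambda^2/2-\lambda t}$; optimizing over $\lambda$ (taking $\lambda=t$) yields the claimed bound $e^{-t^2/2}$.

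Before proving the moment bound I would reduce to the case where $f$ is smooth and bounded, with $\|\nabla f(x)\|_2\le 1$ for all $x$ and $\Exp f(G)=0$. Subtracting the mean is harmless. To obtain smoothness and boundedness, replace $f$ by the truncation at height $\pm R$ of the convolution $f*\varphi_\delta$ with a Gaussian mollifier $\varphi_\delta$; this operation does not increase the Lipschitz constant, and letting $\delta\to0$ and $R\to\infty$ recovers the statement for the original $f$ (here one uses that $f(G)$ is integrable, which itself follows from applying the bound to the bounded truncations). A smooth $1$-Lipschitz function satisfies $\|\nabla f\|_2\le 1$ everywhere.

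The core of the argument is the inequality $\Exp\,e^{\lambda f(G)}\le e^{\lambda^2/2}$ (with $\Exp f(G)=0$), which I would prove via the Ornstein--Uhlenbeck semigroup, following the circle of ideas of G.~Pisier. Write $P_th(x)=\Exp\,h(e^{-t}x+\sqrt{1-e^{-2t}}\,G)$ for the OU semigroup, with generator $Lh=\Delta h-\langle x,\nabla h\rangle$; I will use that $P_0=\Id$, that $P_th\to\Exp\,h(G)$ as $t\to\infty$, the integration-by-parts identity $\Exp[(Lh)(G)\psi(G)]=-\Exp\langle\nabla h(G),\nabla\psi(G)\rangle$, and the commutation relation $\nabla(P_th)=e^{-t}P_t(\nabla h)$. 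Fixing $\lambda$, set $\alpha(t):=\log\Exp\,e^{\lambda P_tf(G)}$, so that $\alpha(0)=\log\Exp\,e^{\lambda f(G)}$ and $\alpha(t)\to 0$. Differentiating, using $\partial_tP_tf=L(P_tf)$, and integrating by parts against $\psi=e^{\lambda P_tf}$ (whose gradient is $\lambda e^{\lambda P_tf}\nabla P_tf$), one obtains
$$\alpha'(t)=\frac{-\lambda^2\,\Exp\big[\|\nabla P_tf(G)\|_2^2\,e^{\lambda P_tf(G)}\big]}{\Exp\,e^{\lambda P_tf(G)}}.$$
The commutation relation together with Jensen's inequality and $\|\nabla f\|_2\le1$ give $\|\nabla P_tf\|_2=e^{-t}\|P_t(\nabla f)\|_2\le e^{-t}P_t(\|\nabla f\|_2)\le e^{-t}$ pointwise, hence $-\alpha'(t)\le\lambda^2e^{-2t}$. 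Integrating over $t\in[0,\infty)$ yields $\log\Exp\,e^{\lambda f(G)}=-\int_0^\infty\alpha'(t)\,dt\le\lambda^2/2$, which is exactly the desired bound (and, in particular, produces the sharp constant $1/2$).

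I expect the only delicate point to be the routine but slightly tedious reduction to smooth bounded $f$ (and the accompanying justification of differentiating under the expectation in $\alpha$ and of the semigroup limits); the semigroup computation itself is short. It is worth recording two alternative routes that also reach the constant $1/2$: one may deduce the theorem from the Gaussian isoperimetric inequality of Borell and Sudakov--Tsirelson, applied to $A=\{f\le\Med f\}$ whose $t$-enlargement is contained in $\{f\le\Med f+t\}$, followed by a passage from the median to the mean; or from the Gaussian logarithmic Sobolev inequality via Herbst's argument, which likewise produces $\Exp\,e^{\lambda f(G)}\le e^{\lambda^2/2}$.
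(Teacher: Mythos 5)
Your proof is correct. Note that the paper does not actually prove this statement: it is quoted as a classical result with references to Pisier, Ledoux--Talagrand and Ledoux, so there is no internal argument to compare against. Your Ornstein--Uhlenbeck semigroup interpolation (differentiating $\alpha(t)=\log\Exp\,e^{\lambda P_tf(G)}$, using the commutation $\nabla P_tf=e^{-t}P_t(\nabla f)$ and integration by parts) is precisely the standard route in the cited sources that yields the sharp constant $\exp(-t^2/2)$ — as the paper remarks, Pisier's original interpolation-along-a-Gaussian-path argument gives a worse constant — and the remaining work (reduction to smooth bounded $f$, justification of differentiation under the expectation and of the limits $t\to0,\infty$) is routine exactly as you describe.
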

The next statement plays a crucial role in our analysis (a version of this inequality for 
the uniform measure on discrete cube was proved by M.~Talagrand in \cite{Talagrand}):
\begin{theor}[{Talagrand's $L_1\text{--}L_2$ bound; see, in particular, \cite{CEL}, \cite[Chapter~5]{Chatterjee}}]\label{th: talagrand l1l2}
Let $f$ be an absolutely continuous function in $\R^N$ and let $\partial_i f$ ($i\leq N$) denote $i$-th component of the gradient of $f$.
Then
$$\Var\big(f(G)\big)\leq C\sum\limits_{i=1}^N \frac{\Exp|\partial_i f(G)|^2}
{1+\log\big(\sqrt{\Exp|\partial_i f(G)|^2}/\Exp|\partial_i f(G)|\big)},$$
where $C>0$ is a universal constant.
\end{theor}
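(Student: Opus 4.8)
The plan is to reproduce the standard semigroup proof of the $L_1$--$L_2$ inequality in Gauss space (as in \cite{CEL}), whose two engines are the Ornstein--Uhlenbeck dynamics and Nelson's hypercontractivity theorem. First I would reduce, by mollification, to the case that $f$ is smooth with $\Exp\|\grad f(G)\|_2^2<\infty$ (if this expectation is infinite the asserted bound is vacuous), verifying that both sides are stable under $f_\delta=f*\phi_\delta\to f$ so that the general absolutely continuous case follows by a limiting argument. I will use the Ornstein--Uhlenbeck semigroup $P_th(x)=\int h(e^{-t}x+\sqrt{1-e^{-2t}}\,y)\,d\gamma_N(y)$ with generator $L=\Delta-\langle x,\nabla\rangle$, together with the standard facts that $P_t$ is self-adjoint on $L_2(\gamma_N)$, that $P_\infty h=\Exp h$, that $\Exp[h\,Lg]=-\Exp\langle\grad h,\grad g\rangle$, and the commutation identity $\grad P_th=e^{-t}P_t(\grad h)$, all immediate from the defining formula.

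The first substantive step is the exact variance representation. Writing $f-\Exp f=-\int_0^\infty LP_tf\,dt$ and integrating by parts gives $\Var(f(G))=\int_0^\infty\Exp\langle\grad f(G),\grad P_tf(G)\rangle\,dt$; applying the commutation identity, then self-adjointness together with the semigroup law $P_t=P_{t/2}P_{t/2}$, turns this into
$$\Var(f(G))=\sum_{i=1}^N\int_0^\infty e^{-t}\,\big\|P_{t/2}(\partial_if)\big\|_{L_2(\gamma_N)}^2\,dt.$$
The second substantive step inserts hypercontractivity: Nelson's theorem yields $\|P_{t/2}h\|_{L_2(\gamma_N)}\le\|h\|_{L_{1+e^{-t}}(\gamma_N)}$, and log-convexity of $L_p$-norms on the probability space $(\R^N,\gamma_N)$, interpolating between $L_1$ and $L_2$ (one computes that the $L_1$-exponent equals $\tanh(t/2)$), gives $\|h\|_{L_{1+e^{-t}}(\gamma_N)}\le\|h\|_{L_1(\gamma_N)}^{\tanh(t/2)}\|h\|_{L_2(\gamma_N)}^{1-\tanh(t/2)}$. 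Hence, setting $a_i:=\Exp|\partial_if(G)|$ and $b_i:=(\Exp|\partial_if(G)|^2)^{1/2}\ge a_i$,
$$\big\|P_{t/2}(\partial_if)\big\|_{L_2(\gamma_N)}^2\le a_i^{\,2\tanh(t/2)}\,b_i^{\,2(1-\tanh(t/2))}=b_i^2\,(a_i/b_i)^{2\tanh(t/2)}.$$

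It then remains to estimate a single scalar integral, the only place where computation is needed: I will show $\int_0^\infty e^{-t}r^{2\tanh(t/2)}\,dt\le C/(1+\log(1/r))$ for all $r\in(0,1]$. Taking $r=a_i/b_i$, so that $\log(1/r)=\log(b_i/a_i)=\log\!\big(\sqrt{\Exp|\partial_if(G)|^2}/\Exp|\partial_if(G)|\big)$, and summing the resulting bounds $Cb_i^2/(1+\log(b_i/a_i))$ over $i$, the theorem follows (the indices with $a_i=0$ contribute zero on both sides). For the scalar estimate I use the elementary bound $\tanh(t/2)\ge c_0\min(t,1)$ with a universal $c_0>0$, so $r^{2\tanh(t/2)}\le e^{-2c_0\min(t,1)\log(1/r)}$; splitting at $t=1$, the part over $[0,1]$ is at most $\int_0^\infty e^{-t(1+2c_0\log(1/r))}\,dt=(1+2c_0\log(1/r))^{-1}\le C/(1+\log(1/r))$, while the part over $[1,\infty)$ is at most $e^{-1}r^{2c_0}\le C/(1+\log(1/r))$ because $u\mapsto e^{-2c_0u}(1+u)$ is bounded on $[0,\infty)$.

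The conceptual core is the coupling of the commutation identity with Nelson's hypercontractivity inside the variance representation; the main obstacle I anticipate is regularity, namely justifying the semigroup manipulations when $f$ is only absolutely continuous, which the mollification reduction handles. The final calculus estimate is routine, but it is worth flagging where the logarithmic improvement over Poincar\'e comes from: entirely from $t$ near $0$, where $\tanh(t/2)\approx t/2$ makes the integrand decay like $e^{-t(1+\log(1/r))}$. Replacing hypercontractivity by the trivial bound $\|P_{t/2}h\|_{L_2(\gamma_N)}\le\|h\|_{L_2(\gamma_N)}$ collapses the time integral to $\int_0^\infty e^{-t}\,dt=1$ and recovers exactly the Poincar\'e inequality $\Var(f(G))\le\Exp\|\grad f(G)\|_2^2$.
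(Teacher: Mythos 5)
The paper does not prove this statement---it is quoted as a known result with references to \cite{CEL} and \cite[Chapter~5]{Chatterjee}---and your argument is precisely the standard semigroup proof found in those sources: the variance representation via the Ornstein--Uhlenbeck semigroup and the commutation $\grad P_t h = e^{-t}P_t(\grad h)$, followed by Nelson's hypercontractivity, $L_1$--$L_2$ interpolation with exponent $\tanh(t/2)$, and the elementary integral bound $\int_0^\infty e^{-t}r^{2\tanh(t/2)}\,dt\leq C/(1+\log(1/r))$. All steps, including the mollification reduction and the treatment of indices with $\Exp|\partial_i f(G)|=0$, check out, so your proposal is correct and matches the intended (cited) proof.
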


\subsection{Canonical positions of convex bodies}

Given an origin-symmetric convex body $B$, its {\it position} is any convex body $T(B)$
for some invertible linear transformation $T$. The two {\it canonical} positions we consider in this
note are John's and the $\ell$-position, which were defined in the introduction.
Recall that a position $T(B)$ is John's if the ellipsoid of maximal volume contained in $T(B)$ is the unit Euclidean ball.
The concept was used by F.~John \cite{John} to estimate the Banach--Mazur distance of arbitrary convex body to the Euclidean ball
(see also \cite{Ball}).

A viewpoint to canonical positions involving arbitrary norms on the space of linear operators $\mathcal L(\R^n,\R^n)$
was developed by D.R.~Lewis \cite{Lewis} (see also \cite[Chapter~3]{Tomczak}, \cite[Chapter~3]{Pisier},
as well as an alternative approach of A.~Giannopoulos and V.~Milman \cite{GM} based on isotropic measures on the Euclidean sphere).
In particular, the $\ell$-position can be defined as a linear transformation $T(B)$ satisfying
$$\ell(T(B),\Id_n)=\ell(B,T^{-1})=1,\quad\quad \ell^*(T(B),\Id_n)=n,$$
where $\ell(\cdot)$ is the norm on the space of linear operators in $\R^n$ defined in the introduction,
and $\ell^*(\cdot)$ is the norm {\it in trace duality} with $\ell(\cdot)$ \cite{Lewis,Tomczak,Pisier,AGM 2015}.
It is easy to see that the $\ell$-position is rotation-invariant in a sense that, together with $T(B)$,
any linear image of the form $UT(B)$ ($U\in O_n$) is in the $\ell$-position.
At the same time, the $\ell$-position is {\it unique} up to an orthogonal transformation \cite[Proposition~14.3]{Tomczak}.
In fact, the following stability result is true:
\begin{lemma}[Stability of the $\ell$-position]\label{l:stability}
Let $n>1$ and let $B$ be an origin-symmetric convex body in $\R^n$ in the $\ell$-position.
Then for any $\delta>0$ there is $\kappa=\kappa(\delta)>0$ depending only on $\delta$ with the following property:
whenever $T$ is an invertible linear operator in $\R^n$ with $\ell(T(B),\Id_n)=1$ and $|\det T|\leq 1+\kappa$, there
exists an orthogonal transformation $U$ of $\R^n$ such that $(1-\delta)B\subset UT(B)\subset (1+\delta)B$.
\end{lemma}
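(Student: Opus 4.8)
The plan is to argue by contradiction using a compactness argument on the space of invertible operators, exploiting the uniqueness of the $\ell$-position up to an orthogonal transformation. Suppose the conclusion fails for some $\delta>0$: then there is a sequence of invertible linear operators $T_m$ with $\ell(T_m(B),\Id_n)=1$ and $|\det T_m|\to 1$ (we may take $|\det T_m|\le 1+1/m$), yet for every orthogonal $U$ we have $UT_m(B)\not\subset(1+\delta)B$ or $(1-\delta)B\not\subset UT_m(B)$. The key preliminary observation is that the family $\{T_m\}$ is \emph{bounded} and \emph{bounded away from singular}: indeed $\ell(B,T_m^{-1})=\ell(T_m(B),\Id_n)=1$ together with $|\det T_m|$ close to $1$ forces control on both operator norms $\|T_m\|$ and $\|T_m^{-1}\|$. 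Concretely, since $\ell(B,\cdot)$ is a norm on $\R^{n\times n}$ equivalent (in a dimension-dependent but $m$-independent way) to the operator norm, $\ell(B,T_m^{-1})=1$ bounds $\|T_m^{-1}\|$ from above; and then $|\det T_m|\le 2$ combined with $\|T_m^{-1}\|\le M$ bounds $\|T_m\|$ by $M^{n-1}|\det T_m|$ via the cofactor/adjugate formula $T_m=(\det T_m)(T_m^{-1})^{-1}$, equivalently via $\|T_m\|\le \|T_m^{-1}\|^{n-1}\,|\det T_m|$. Hence $\{T_m\}$ lies in a compact subset of $GL_n(\R)$.

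Next I would pass to a convergent subsequence $T_m\to T_\infty$ with $T_\infty$ invertible. By continuity of the determinant, $|\det T_\infty|=1$, and by continuity of $\ell(B,\cdot)$ we get $\ell(B,T_\infty^{-1})=1$, i.e.\ $\ell(T_\infty(B),\Id_n)=1$. Since $|\det T_\infty|=1=\sup\{|\det U|:\ell(B,U)\le 1\}$ (because $B$ is in the $\ell$-position and $\ell(B,T_\infty^{-1})=1$ means $T_\infty^{-1}$ is a maximizer of the determinant subject to $\ell(B,\cdot)\le 1$), the operator $T_\infty^{-1}$ realizes the defining extremal property of the $\ell$-position. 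By the uniqueness of the $\ell$-position up to orthogonal transformations (\cite[Proposition~14.3]{Tomczak}), $T_\infty(B)$ is itself an $\ell$-position of $B$, so there exists $U_\infty\in O_n$ with $U_\infty T_\infty(B)=B$, equivalently $(1-\delta/2)B\subset U_\infty T_\infty(B)\subset(1+\delta/2)B$.

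Finally I would transfer this to the tail of the sequence. Because the map $T\mapsto T(B)$ is continuous with respect to the Hausdorff metric on bodies (the support function of $T(B)$ depends continuously on $T$, uniformly on the sphere, as long as $T$ stays in a fixed compact subset of $GL_n$), and because inclusion relations of the form $aB\subset K\subset bB$ are stable under small Hausdorff perturbations of $K$ (using that $B$ has nonempty interior and is bounded), there is $m_0$ such that for all $m\ge m_0$ we have $(1-\delta)B\subset U_\infty T_m(B)\subset(1+\delta)B$. This contradicts the choice of the $T_m$, completing the proof. The main obstacle — and the only place requiring care — is the first step: extracting $m$-independent bounds on $\|T_m\|$ and $\|T_m^{-1}\|$ so that the sequence stays in a fixed compact set of invertible operators; once that is in place, the rest is a routine continuity-plus-uniqueness argument. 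One should note that $\kappa=\kappa(\delta)$ produced this way genuinely depends on $n$ as well, which is acceptable since $n$ is fixed throughout.
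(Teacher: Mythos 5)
Your compactness argument is internally coherent for a \emph{fixed} body $B$ and dimension $n$: the a priori bounds on $\|T_m^{-1}\|$ and $\|T_m\|$, the passage to an invertible limit $T_\infty$ with $|\det T_\infty|=1$ and $\ell(B,T_\infty^{-1})=1$, the identification of $T_\infty(B)$ as an $\ell$-position of $B$, and the appeal to uniqueness up to $O_n$ are all sound. The genuine gap is the uniformity of $\kappa$. A proof by contradiction along a sequence gives no control on where the contradiction first bites, so the $\kappa$ you extract depends not only on $\delta$ and $n$ (which you flag) but also on $B$ itself --- already the equivalence constants between $\ell(B,\cdot)$ and the operator norm, hence the compact subset of $GL_n(\R)$ containing the $T_m$, are $B$-dependent. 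The lemma asserts $\kappa=\kappa(\delta)$ depending only on $\delta$, and this is genuinely used: in Corollary~\ref{c:smooth} the lemma is applied to the body $T(B'')$, which is constructed only \emph{after} $\widetilde\kappa=\min(\kappa(\delta/4),\delta/4)$ has been fixed, so a body-dependent $\kappa$ would make that application circular. (The corollary could be salvaged by applying a $B$-dependent version to $B$ itself and a rescaled $T$, but that repairs the paper, not your proof of the stated lemma.)

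The paper's proof is entirely different and quantitative: write $T=U^{-1}P$ by polar decomposition, suppose some eigenvalue of $P$ leaves $[1-\delta,1+\delta]$, set $W^{-1}:=\frac12(P^{-1}+\Id_n)$, and combine the convexity bound $\ell(W(B),\Id_n)^2\le\frac12\ell(P(B),\Id_n)^2+\frac12\ell(B,\Id_n)^2=1$ with the elementary inequality $\frac{\lambda^{-1}+1}{2}\ge\lambda^{-1/2}$ (strict by a factor $\sqrt{1+\kappa}$ for the outlying eigenvalue) to force $|\det W^{-1}|>1$ while $\ell(B,W^{-1})\le1$, contradicting the extremal property of the $\ell$-position. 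This yields the explicit, body- and dimension-free value $\kappa=\frac{\delta^2}{4+4\delta}$. To keep a soft argument you would at least need compactness of the family of all $\ell$-positions modulo $O_n$ together with uniform continuity statements, which is substantially more work than the paper's convexity trick; as written, your proposal proves a strictly weaker statement than the lemma.
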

\begin{proof}
The proof to a large extent follows the argument in \cite[Proposition~14.3]{Tomczak}.
Fix a small $\delta>0$ and define $\kappa:=\frac{\delta^2}{4+4\delta}$.
Now, let $T$ be an operator satisfying the assumptions of the lemma.
Choose an orthogonal operator $U$
so that $T=U^{-1}P$, with $P$ being positive definite ({\it{}the polar decomposition} of $T$).
Then $\det P\leq 1+\kappa$ and $\ell(P(B),\Id_n)=1$. It remains to show that
$1-\delta\leq\lambda_{\min}(P)\leq \lambda_{\max}(P)\leq 1+\delta$.
We shall prove this by contradiction. Assume that either $\lambda_{\min}(P)<1-\delta$
or $\lambda_{\max}(P)>1+\delta$. Define an operator $W$ via its inverse:
$W^{-1}:=\frac{1}{2}\big(P^{-1}+\Id_n\big)$.
Clearly, if $\lambda_1,\lambda_2,\dots,\lambda_n$ are eigenvalues of $P$ then
$$\det W^{-1}=\prod_{i=1}^n\frac{{\lambda_i}^{-1}+1}{2}.$$
Obviously, $\frac{{\lambda_i}^{-1}+1}{2}\geq {\lambda_i}^{-1/2}$ for all $i\leq n$ and,
additionally, as at least one of the eigenvalues $\lambda_{i_0}$ satisfies $|\lambda_{i_0}-1|>\delta$, we have
\begin{align*}
\frac{{\lambda_{i_0}}^{-1}+1}{2}
&={\lambda_{i_0}}^{-1/2}\,
\frac{{\lambda_{i_0}}^{-1/2}+{\lambda_{i_0}}^{1/2}}{2}\\
&> {\lambda_{i_0}}^{-1/2}\,\frac{{(1+\delta)}^{-1/2}+{(1+\delta)}^{1/2}}{2}\\
&= {\lambda_{i_0}}^{-1/2}\,\sqrt{1+\frac{\delta^2}{4+4\delta}}\\
&={\lambda_{i_0}}^{-1/2}\,\sqrt{1+\kappa}.
\end{align*}
Thus,
$$\det W^{-1}>\sqrt{1+\kappa}\prod_{i=1}^n {\lambda_i}^{-1/2}\geq 1,$$
whence $\det W<1$. Next, observe that for any vector $x\in\R^n$ we have
$$\|x\|_{W(B)}^2=\|W^{-1}x\|_B^2\leq \bigg(\frac{\|P^{-1}x\|_B+\|x\|_B}{2}\bigg)^2
\leq \frac{1}{2}\|x\|_{P(B)}^2+\frac{1}{2}\|x\|_{B}^2.$$
Thus, $\ell(W(B),\Id_n)^2\leq \frac{1}{2}\ell(P(B),\Id_n)^2+\frac{1}{2}\ell(B,\Id_n)^2=1$ while $|\det W|<1$.
This contradicts the assumption that $B$ is in the $\ell$-position.
\end{proof}
As a simple corollary, we obtain
\begin{cor}\label{c:smooth}
Let $n>1$ and let $B$ be an origin-symmetric convex body in the $\ell$-position.
Then for any $\delta>0$ there is an origin-symmetric convex body $B_\delta$ with a smooth boundary,
in the $\ell$-position, and such that $(1-\delta)B\subset B_\delta\subset (1+\delta)B$.
Moreover, if the norm $\|\cdot\|_B$ admits a $1$-unconditional basis then $B_\delta$ can be defined so that
$\|\cdot\|_{B_\delta}$ is $1$-unconditional as well.
\end{cor}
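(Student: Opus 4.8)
The plan is to smooth $B$, then restore the $\ell$-position by an invertible linear map, and finally invoke the stability Lemma~\ref{l:stability} to bound how much the body has moved. For the smoothing, fix $0<r\le R$ with $rB_2^n\subseteq B\subseteq RB_2^n$ (possible since $B$ is a convex body with the origin in its interior), pick a small $\eta>0$, and let $B':=B+\eta B_2^n$. As an outer parallel body of a convex body, $B'$ has a smooth boundary; one has $B\subseteq B'\subseteq(1+\delta_0)B$ with $\delta_0:=\eta/r$, so that $\ell(B',U)\le\ell(B,U)\le(1+\delta_0)\,\ell(B',U)$ for every operator $U$; and $B'$ is coordinate-symmetric whenever $B$ is (a Minkowski sum of two such bodies). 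The delicate point will be that the linear map restoring the $\ell$-position of $B'$ must be taken \emph{positive definite} --- and, when $B$ is $1$-unconditional, \emph{diagonal} --- since otherwise Lemma~\ref{l:stability} would only place the final body near \emph{some} rotation of $B$ (the $\ell$-position being unique only up to orthogonal maps), and not near $B$ itself.

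To restore the $\ell$-position, note that $\{U:\ell(B',U)\le1\}$ is the unit ball of a norm on $\R^{n\times n}$, so $|\det|$ attains a maximum there, at some invertible $U_0$ with $\ell(B',U_0)=1$ (here $\Id$ is feasible, since $\ell(B',\Id)\le\ell(B,\Id)=1$). Because $\ell(B',U_0Q)=\ell(B',U_0)$ and $|\det(U_0Q)|=|\det U_0|$ for every orthogonal $Q$ (as $QG$ has the same law as $G$), replacing $U_0$ by the positive-definite factor of its polar decomposition we may assume $U_0$ is positive definite. If moreover $B$, hence $B'$, is coordinate-symmetric, then $\ell(B',\sigma U_0\sigma)=\ell(B',U_0)$ and $|\det(\sigma U_0\sigma)|=|\det U_0|$ for every diagonal sign matrix $\sigma\in\{-1,1\}^n$, so the feasible point $2^{-n}\sum_\sigma\sigma U_0\sigma=\mathrm{diag}\big((U_0)_{11},\dots,(U_0)_{nn}\big)$ has determinant $\prod_{i}(U_0)_{ii}\ge\det U_0$ by Hadamard's inequality for positive-definite matrices, hence is again a maximizer; thus $U_0$ may be taken positive-definite diagonal. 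Set $B_\delta:=U_0^{-1}(B')$. As recalled in the introduction, $U_0^{-1}(B')$ is then in the $\ell$-position; it is smooth, being a linear image of $B'$; and it is coordinate-symmetric whenever $B$ is, since then $U_0$ is diagonal.

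It remains to choose $\eta$ so that $(1-\delta)B\subseteq B_\delta\subseteq(1+\delta)B$. Feasibility of $\Id$ for $B'$ gives $\det U_0\ge1$; since $\ell(B,U_0)\le(1+\delta_0)\ell(B',U_0)=1+\delta_0$, feasibility of $(1+\delta_0)^{-1}U_0$ for $B$ and the fact that $B$ is in the $\ell$-position give $\det U_0\le(1+\delta_0)^n$; also $\ell(B,U_0)\in[1,1+\delta_0]$. Put $T:=\ell(B,U_0)\,U_0^{-1}$; then $T$ is positive definite, $\ell(T(B),\Id)=\ell(B,T^{-1})=1$, and $|\det T|=\ell(B,U_0)^n/\det U_0\le(1+\delta_0)^n$. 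Given $\delta$, fix $\delta_1:=\delta/3$, let $\kappa=\kappa(\delta_1)$ be as in Lemma~\ref{l:stability}, and choose $\eta$ (hence $\delta_0$) small enough that $(1+\delta_0)^n\le1+\kappa$, $(1-\delta_1)(1+\delta_0)^{-1}\ge1-\delta$, and $(1+\delta_0)(1+\delta_1)\le1+\delta$. Lemma~\ref{l:stability} applied to $T$ furnishes an orthogonal $U$ with $(1-\delta_1)B\subseteq UT(B)\subseteq(1+\delta_1)B$; but $T$ is positive definite, so by the proof of that lemma --- where $U$ is the orthogonal factor in the polar decomposition of $T$ --- we have $U=\Id$, hence $(1-\delta_1)B\subseteq T(B)\subseteq(1+\delta_1)B$, i.e.\ $(1-\delta_1)\ell(B,U_0)^{-1}B\subseteq U_0^{-1}(B)\subseteq(1+\delta_1)\ell(B,U_0)^{-1}B$. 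Combining this with $U_0^{-1}(B)\subseteq U_0^{-1}(B')=B_\delta\subseteq(1+\delta_0)U_0^{-1}(B)$ and $\ell(B,U_0)\in[1,1+\delta_0]$ yields $(1-\delta_1)(1+\delta_0)^{-1}B\subseteq B_\delta\subseteq(1+\delta_0)(1+\delta_1)B$, and therefore $(1-\delta)B\subseteq B_\delta\subseteq(1+\delta)B$, as required. The only genuine obstacle in this scheme is the reduction to a positive-definite (and, under $1$-unconditionality, diagonal) repositioning map; everything else is bookkeeping with the quantities $\ell(B,U_0)$ and $\det U_0$.
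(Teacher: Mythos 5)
Your proof is correct, and while it shares the paper's overall skeleton (smooth the body, reposition it, invoke the stability Lemma~\ref{l:stability}), it differs in two substantive ways. First, you smooth by passing to the outer parallel body $B+\eta B_2^n$, which is elementary and visibly preserves coordinate symmetry; the paper instead cites a smooth approximation of the norm and then renormalizes so that $\ell(B'',\Id_n)=1$. Second --- the more interesting divergence --- you resolve the orthogonal ambiguity of the $\ell$-position at the source: by exploiting the invariance of $\ell(B',\cdot)$ and $|\det|$ under right multiplication by orthogonal maps and (in the unconditional case) under conjugation by sign matrices, you arrange for the repositioning map to be positive-definite diagonal, and then note that the orthogonal correction produced by Lemma~\ref{l:stability} is the identity. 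The paper sidesteps this entirely: it applies the stability lemma to the already-repositioned body $T(B'')$ with the operator $T^{-1}$ and simply absorbs the resulting rotation into the definition $B_\delta:=U^{-1}T(B'')$, which still admits a $1$-unconditional basis (namely $\{U^{-1}Te_i\}$) even though it is no longer coordinate-symmetric. Your route buys a literally coordinate-symmetric $B_\delta$ and a self-contained smoothing, at the cost of the symmetrization step and of leaning on the \emph{proof} of Lemma~\ref{l:stability} (where $U$ is the orthogonal polar factor of $T$, hence the identity when $T$ is positive definite) rather than on its statement alone; it would be cleaner to record that strengthened form of the lemma explicitly. Two minor points to tighten: justify that the maximizer $U_0$ exists, is invertible, and satisfies $\ell(B',U_0)=1$ exactly (compactness of the unit ball of the operator norm $\ell(B',\cdot)$ plus a scaling argument), and note that placing the $1$-unconditional basis at the standard coordinate basis is legitimate by Lemma~\ref{l: uncond} together with the rotation-invariance of the $\ell$-position and of your construction.
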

\begin{proof}
Fix the convex body $B$ and a small positive $\delta$.
Define a positive number $\widetilde \kappa:=\min(\kappa(\delta/4),\delta/4)$,
where the function $\kappa$ is taken from Lemma~\ref{l:stability}.
First, one can construct a smooth approximation $B'$
of $B$ satisfying the relations
$$(1+\widetilde\kappa/4)^{-1/n}B'\subset B\subset (1+\widetilde\kappa/4)^{1/n}B',$$
and such that $\|\cdot\|_{B'}$ is $1$-unconditional whenever $\|\cdot\|_B$ is
(see, for example, \cite{Koldobsky}).
The inclusion relations imply that $(1+\widetilde\kappa/4)^{-1/n}\leq\ell(B',\Id_n)\leq (1+\widetilde\kappa/4)^{1/n}$,
whence, applying an appropriate dilation, we get a smooth convex body $B''$ satisfying
\begin{equation}\label{eq:aux659}
(1+\widetilde\kappa)^{-1/n}B\subset B''\subset (1+\widetilde\kappa)^{1/n}B
\end{equation}
and such that $\ell(B'',\Id_n)=1$. Now, let $T$ be an invertible linear transformation
so that $T(B'')$ is in the $\ell$-position.
Obviously, $T(B'')\subset (1+\widetilde\kappa)^{1/n} T(B)$, and, as $B$ is in the $\ell$-position,
we have $\ell(T(B),\Id_n)\geq |\det T|^{-1/n}$. Thus,
$1=\ell(T(B''),\Id_n)\geq (1+\widetilde\kappa)^{-1/n}|\det T|^{-1/n}$, i.e.\ $|\det T^{-1}|\leq 1+\widetilde\kappa$.
By Lemma~\ref{l:stability} (applied to $T(B'')$ and operator $T^{-1}$), there is an orthogonal transformation $U$
such that
$$(1-\delta/4)T(B'')\subset UT^{-1}T(B'')=U(B'')\subset (1+\delta/4)T(B''),$$
whence
$$(1-\delta/4)U^{-1}T(B'')\subset B''\subset (1+\delta/4)U^{-1}T(B'').$$
Together with \eqref{eq:aux659}, this implies that $U^{-1}T(B'')$ is the smooth convex body in the $\ell$-position satisfying
the required conditions.
\end{proof}
\begin{rem}
Corollary~\ref{c:smooth} will allow us to reduce the proof of Theorem~\ref{th: random ell}
to the case when the underlying convex body is smooth.
\end{rem}

The next statement is intuitively obvious; we give its proof for completeness.
\begin{lemma}\label{l: uncond}
Let $B$ be an origin-symmetric convex body in $\R^n$ in the $\ell$-position, and assume that
the normed space $(\R^n,\|\cdot\|_B)$ admits a $1$-unconditional basis. Then the basis is orthogonal
with respect to the canonical inner product in $\R^n$. 
\end{lemma}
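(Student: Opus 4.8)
The plan is to deduce the lemma from the more general fact that \emph{every} invertible linear operator $R$ with $R(B)=B$ is orthogonal once $B$ is in the $\ell$-position, and then to specialize to the sign-change operators attached to a $1$-unconditional basis.

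\emph{Setup.} Let $y_1,\dots,y_n$ be a $1$-unconditional basis of $(\R^n,\|\cdot\|_B)$, and for each $i\le n$ let $R_i$ be the linear operator with $R_i y_i=-y_i$ and $R_i y_j=y_j$ for $j\ne i$. By $1$-unconditionality $\|R_i x\|_B=\|x\|_B$ for every $x\in\R^n$, hence $R_i(B)=B$; moreover $R_i$ is an involution whose $(+1)$-eigenspace is $H_i:=\spn\{y_j:\,j\ne i\}$ and whose $(-1)$-eigenspace is $\R y_i$. It therefore suffices to show that each $R_i$ is orthogonal: an orthogonal involution is symmetric, so its eigenspaces $\R y_i$ and $H_i$ are mutually orthogonal, giving $\langle y_i,y_j\rangle=0$ for all $j\ne i$.

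\emph{Main step.} I would prove: if $R\in\R^{n\times n}$ is invertible and $\|Rx\|_B=\|x\|_B$ for all $x$, then $R$ is orthogonal. Since $\|RG\|_B=\|G\|_B$ pointwise, $\ell(B,R)=\ell(B,\Id_n)=1$. Take the polar decomposition $R=SU$ with $S=(RR^{\top})^{1/2}$ positive definite and $U$ orthogonal. Because $G$ is rotationally invariant in distribution, $\ell(B,\cdot)$ is invariant under right multiplication by orthogonal operators, so $\ell(B,S)=\ell(B,RU^{-1})=\ell(B,R)=1$, while $|\det S|=|\det R|=1$. As $B$ is in the $\ell$-position, $\sup\{|\det V|:\,\ell(B,V)\le 1\}=1$, so $S$ maximizes $|\det V|$ over this set — and so does $\Id_n$. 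Now $V\mapsto\ell(B,V)$ is a norm on $\R^{n\times n}$ (convexity is immediate from convexity of $x\mapsto\|x\|_B$), so $\{V\text{ positive definite}:\,\ell(B,V)\le 1\}$ is convex, and on it $\log\det$ is strictly concave; hence it admits at most one maximizer, forcing $S=\Id_n$. Thus $RR^{\top}=\Id_n$, i.e.\ $R$ is orthogonal. Applying this to each $R_i$ and invoking the Setup finishes the proof.

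\emph{Expected obstacle.} The only delicate point — and the reason one cannot merely quote ``uniqueness of the $\ell$-position up to an orthogonal transformation'' — is that the supremum defining the $\ell$-position could a priori be attained at operators other than $\Id_n$. The argument sidesteps this by passing, via the polar decomposition together with the right-invariance of $\ell(B,\cdot)$ under $O_n$, to positive-definite competitors, where strict concavity of $\log\det$ pins the maximizer down uniquely. Everything else is routine, and in particular no smoothness of $\partial B$ is needed.
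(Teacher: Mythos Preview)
Your argument is correct, and it takes a genuinely different route from the paper's proof. The paper argues by contradiction: assuming some $x_n$ is not orthogonal to $H=\spn\{x_1,\dots,x_{n-1}\}$, it builds a specific volume-preserving shear $T$ (fixing $x_1,\dots,x_{n-1}$ and sending $x_n\mapsto x_n-\Proj_H x_n$), and then uses the $1$-unconditionality of the basis in a direct computation to show $\ell(T(B),\Id_n)\le\ell(B,\Id_n)$, which contradicts uniqueness of the $\ell$-position. In contrast, you prove the cleaner and more general fact that \emph{every} linear isometry of $(\R^n,\|\cdot\|_B)$ is orthogonal once $B$ is in the $\ell$-position, via polar decomposition and strict concavity of $\log\det$ on positive-definite matrices, and then you read off orthogonality of the basis from the sign-flip isometries $R_i$. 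Your approach never touches the unconditional structure except to produce the $R_i$'s, and it yields a statement of independent interest (the isometry group of $B$ sits inside $O_n$); the paper's approach is more hands-on and localizes the computation to the specific geometry of the unconditional basis. Your caution in the ``Expected obstacle'' paragraph is well placed: the bare phrase ``uniqueness up to an orthogonal transformation'' only promises that $R(B)=U(B)$ for \emph{some} $U\in O_n$, which is vacuous here since $R(B)=B$; pinning down $R$ itself requires exactly the strict-concavity step you supply (this is, in effect, the content of the paper's Lemma~\ref{l:stability} at $\kappa=0$).
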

\begin{proof}
Let $x_1,x_2,\dots,x_n$ be a $1$-unconditional basis in $(\R^n,\|\cdot\|_B)$,
and suppose that it is not orthogonal. Without loss of generality, we can assume that $H:=\spn\{x_1,\dots,x_{n-1}\}$
and $x_n$ are not orthogonal. Let $T$ be the linear transformation of $\R^n$ given by its action on the basis vectors:
$Tx_i=x_i$ for all $i\leq n-1$, and $Tx_n=x_n-\Proj_{H}x_n$, where $\Proj_H$ is the orthogonal projection onto $H$.
It is easy to see that the transformation $T$ is volume-preserving. Further,
define a convex body $B'$ via its Minkowski functional:
$$\Big\|\sum_{i=1}^n a_i T x_i\Big\|_{B'}:=\Big\|\sum_{i=1}^n a_i x_i\Big\|_{B},\quad\mbox{for all }a_i\in\R,\;\;i\leq n.$$
Thus, $B'$ is a (non-orthogonal) linear transformation of $B$ and $\Vol(B)=\Vol(B')$.
We will show that $\ell(B',\Id_n)\leq\ell(B,\Id_n)$ which, in view of the uniqueness of the $\ell$-position
mentioned above (see \cite[Proposition~14.3]{Tomczak} or the last corollary) leads to contradiction.

Let $G'$ be the standard $(n-1)$--dimensional Gaussian vector in $H$ and let $g_n$ be the standard Gaussian variable
independent from $G'$. We consider three random variables $\xi,\eta_1,\eta_2$ on the probability space given by
\begin{align*}
\xi&:=\Big\|G'+\frac{Tx_n}{\|Tx_n\|_2}g_n\Big\|_{B'};\\
\eta_1&:=\Big\|G'+\frac{Tx_n}{\|Tx_n\|_2}g_n\Big\|_{B};\\
\eta_2&:=\Big\|G'-\frac{Tx_n}{\|Tx_n\|_2}g_n\Big\|_{B}.
\end{align*}
Obviously, $\ell(B,\Id_n)=\big(\Exp\,\eta_1^2\big)^{1/2}=\big(\Exp\,\eta_2^2\big)^{1/2}$, and
$\ell(B',\Id_n)=\big(\Exp\,\xi^2\big)^{1/2}$. At the same time,
using $1$-unconditionality of the basis $\{x_1,\dots,x_n\}$ with respect to $\|\cdot\|_B$, we obtain
\begin{align*}
\eta_1+\eta_2&=\Big\|G'+\frac{Tx_n}{\|Tx_n\|_2}g_n\Big\|_{B}+\Big\|G'-\frac{Tx_n}{\|Tx_n\|_2}g_n\Big\|_{B}\\
&=\Big\|G'+\frac{x_n-\Proj_H x_n}{\|Tx_n\|_2}g_n\Big\|_{B}+\Big\|G'-\frac{x_n-\Proj_H x_n}{\|Tx_n\|_2}g_n\Big\|_{B}\\
&=\Big\|G'+\frac{x_n-\Proj_H x_n}{\|Tx_n\|_2}g_n\Big\|_{B}+\Big\|G'+\frac{x_n+\Proj_H x_n}{\|Tx_n\|_2}g_n\Big\|_{B}\\
&\geq 2\Big\|G'+\frac{x_n}{\|Tx_n\|_2}g_n\Big\|_{B}\\
&=2\Big\|G'+\frac{Tx_n}{\|Tx_n\|_2}g_n\Big\|_{B'}\\
&=2\xi.
\end{align*}
Thus, by the triangle inequality we get
$$2\big(\Exp\,\xi^2\big)^{1/2}\leq \big(\Exp\,\eta_1^2\big)^{1/2}+\big(\Exp\,\eta_2^2\big)^{1/2},$$
whence
$$\ell(B',\Id_n)\leq\ell(B,\Id_n).$$
This implies that $B'$ must also be in the $\ell$-position contradicting the fact that
the position is unique up to an orthogonal transformation.
\end{proof}

\subsection{The gradient}

Let $B$ be an origin-symmetric convex body in $\R^n$ with a smooth boundary.
For any point $x\in\R^n\setminus \{0\}$,
the gradient $\grad_B(x)$ of the function $\|\cdot\|_B$ at point $x$ is well defined.
It is not difficult to check that
\begin{equation}\label{eq: grad max property}
\|x\|_B=\langle \grad_B(x),x\rangle=\sup_{y\in \R^n\setminus\{0\}}\langle \grad_B(y),x\rangle,
\end{equation}
and that $\grad_B(\lambda x)=\grad_B(x)=-\grad_B(-x)$ for all $x\in\R^n\setminus\{0\}$ and $\lambda>0$.
Further, the gradient of $\|\cdot\|_B$ is continuous everywhere on its domain.

The next statement follows from the fact that any $1$-unconditional norm in $\R^n$ is a monotone function
in the positive cone, as well as from hyperplane symmetries. We omit the proof.
\begin{lemma}\label{l: unconditional grad}
Let $B$ be a convex body in $\R^n$ with a smooth boundary such that the standard basis $e_1,e_2,\dots,e_n$
is $1$-unconditional with respect to $\|\cdot\|_B$.
Then for every point $(x_1,x_2,\dots,x_n)\in\R^n\setminus\{0\}$
and every collection of signs $(\sigma_j)_{j=1}^n\in\{-1,1\}^n$ we have
$$0\leq \Big\langle\grad_B\Big(\sum\nolimits_{j\leq n}x_j e_j\Big),x_ie_i\Big\rangle
=\Big\langle\grad_B\Big(\sum\nolimits_{j\leq n}\sigma_j x_j e_j\Big),\sigma_i x_i e_i\Big\rangle,\quad i\leq n.$$
\end{lemma}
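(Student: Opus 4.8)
\textbf{Proof plan for Lemma~\ref{l: unconditional grad}.}

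The plan is to establish the equality and the inequality separately, and each reduces to an elementary observation about the symmetries and monotonicity of a $1$-unconditional norm combined with the chain rule for the gradient.

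First I would handle the equality. Let $R_\sigma$ denote the diagonal (sign-flip) isometry $R_\sigma(\sum_j x_j e_j) = \sum_j \sigma_j x_j e_j$. Since $e_1,\dots,e_n$ is $1$-unconditional with respect to $\|\cdot\|_B$, we have $\|R_\sigma y\|_B = \|y\|_B$ for every $y\in\R^n$; in other words $\|\cdot\|_B\circ R_\sigma = \|\cdot\|_B$. Differentiating this identity at a point $x = \sum_j x_j e_j \in\R^n\setminus\{0\}$ and using that $R_\sigma$ is linear and self-adjoint (indeed $R_\sigma^{-1}=R_\sigma$), the chain rule gives $R_\sigma\big(\grad_B(R_\sigma x)\big) = \grad_B(x)$, hence $\grad_B(R_\sigma x) = R_\sigma\,\grad_B(x)$. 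Taking the inner product with $\sigma_i x_i e_i$ and using that $R_\sigma$ is orthogonal, $\langle \grad_B(R_\sigma x), \sigma_i x_i e_i\rangle = \langle R_\sigma\grad_B(x), R_\sigma(x_i e_i)\rangle = \langle \grad_B(x), x_i e_i\rangle$, which is exactly the claimed equality. (Here I use that the norm is smooth away from the origin, so the gradient exists and the chain rule applies; this is part of the hypothesis that $\partial B$ is smooth, combined with the facts recalled in \eqref{eq: grad max property}.)

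Next I would prove $\langle \grad_B(x), x_i e_i\rangle \geq 0$. By the equality just established, it suffices to treat the case $x_i\ge 0$ (replace $x$ by $R_\sigma x$ with $\sigma_i$ chosen so that $\sigma_i x_i\ge 0$ and all other $\sigma_j=1$; this does not change the inner product). A $1$-unconditional norm is nondecreasing in each coordinate on the positive cone: if $0\le s\le t$ and all other coordinates of two points agree and are arbitrary, then $\|\cdot + s e_i\|_B \le \|\cdot + t e_i\|_B$ — this follows because, writing the point with $i$-th coordinate $s$ as a convex combination of the two points with $i$-th coordinates $\pm t$ (namely $s e_i = \frac{t+s}{2t}(t e_i) + \frac{t-s}{2t}(-t e_i)$ when $t>0$, combined with unconditionality $\|\cdot + t e_i\|_B = \|\cdot - t e_i\|_B$) and applying the triangle inequality, $\|\cdot + s e_i\|_B \le \|\cdot + t e_i\|_B$. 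Consequently the function $t\mapsto \|x - x_i e_i + t e_i\|_B = \|\,\sum_{j\ne i} x_j e_j + t e_i\,\|_B$ is nondecreasing on $[0,\infty)$, so its derivative at $t = x_i \ge 0$ is nonnegative; but that derivative is precisely $\langle \grad_B(x), e_i\rangle$, and multiplying by $x_i\ge 0$ gives $\langle \grad_B(x), x_i e_i\rangle \ge 0$.

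The only mild subtlety, and the one place one must be slightly careful, is the case $x_i=0$: then $x$ may lie on a coordinate hyperplane where a priori one wants to be sure the gradient behaves well, but smoothness of $\partial B$ is assumed globally on $\R^n\setminus\{0\}$, so $\grad_B(x)$ exists there too, and the monotonicity argument still yields that the one-sided derivative at $t=0$ from the right is $\ge 0$ while (by the reflection $e_i\mapsto -e_i$) the one from the left is $\le 0$; smoothness forces them to agree, so $\langle\grad_B(x),e_i\rangle$ can be nonzero only with the correct sign, and in any case $\langle\grad_B(x),x_ie_i\rangle = 0\cdot\langle\grad_B(x),e_i\rangle = 0 \ge 0$. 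Thus the inequality holds in all cases, completing the proof. I do not expect any real obstacle here; the statement is, as the authors say, intuitively obvious, and the work is just bookkeeping with the chain rule and the definition of $1$-unconditionality.
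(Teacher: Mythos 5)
Your proof is correct and follows exactly the route the paper indicates for this omitted proof: hyperplane symmetries (via the chain rule applied to $\|\cdot\|_B\circ R_\sigma=\|\cdot\|_B$) for the equality, and monotonicity of a $1$-unconditional norm on the positive cone for the sign of $\langle\grad_B(x),x_ie_i\rangle$. The handling of the boundary case $x_i=0$ is also fine.
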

As an almost immediate consequence, we obtain
\begin{lemma}\label{l: l1 grad}
Let $B$ be a convex body in $\R^n$ with a smooth boundary such that the standard basis $e_1,e_2,\dots,e_n$
is $1$-unconditional with respect to $\|\cdot\|_B$.
Then for any $p\geq 1$ we have
$$
\Exp\|\grad_B(G)\|_1^p=\Exp\Big(\sum_{i=1}^n|\langle \grad_B(G),e_i\rangle|\Big)^p
\leq \big(\pi/2\big)^{p/2}\,\Exp\|G\|_B^p.
$$
\end{lemma}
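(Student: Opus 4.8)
The plan is to prove a \emph{pointwise} inequality $\|\grad_B(G)\|_1\le\big\|\sum_{i=1}^n e_i\big\|_B$ (valid for almost every realization of $G$) and then to bound $\big\|\sum_{i=1}^n e_i\big\|_B$ by $(\pi/2)^{1/2}\big(\Exp\|G\|_B^p\big)^{1/p}$ via a single application of Jensen's inequality; the constant $\sqrt{\pi/2}$ appears precisely because it equals $1/\Exp|g|$.

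First I would extract the sign pattern of the gradient. Since $\partial B$ is smooth and $G\neq 0$ almost surely, $\grad_B(G)$ is well defined, and applying Lemma~\ref{l: unconditional grad} with $x=G$ gives $g_i\langle\grad_B(G),e_i\rangle\ge 0$ for every $i$; hence, on the full-measure event $\{g_i\neq 0\text{ for all }i\}$, one has $|\langle\grad_B(G),e_i\rangle|=\sigma_i\langle\grad_B(G),e_i\rangle$ with $\sigma_i:=\sign(g_i)$. Summing over $i$,
$$\|\grad_B(G)\|_1=\Big\langle\grad_B(G),\ \sum_{i=1}^n\sigma_i e_i\Big\rangle\le\Big\|\sum_{i=1}^n\sigma_i e_i\Big\|_B=\Big\|\sum_{i=1}^n e_i\Big\|_B,$$
where the inequality is the maximality property \eqref{eq: grad max property} of the gradient and the last equality uses $1$-unconditionality of $e_1,\dots,e_n$. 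Raising to the $p$-th power and integrating yields $\Exp\|\grad_B(G)\|_1^p\le\big\|\sum_{i=1}^n e_i\big\|_B^p$ for every $p\ge 1$.

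Second, I would pass from $G$ to $(|g_1|,\dots,|g_n|)$: by $1$-unconditionality $\|G\|_B=\big\|\sum_i|g_i|e_i\big\|_B$, and $x\mapsto\|x\|_B^p$ is convex for $p\ge1$ (composition of the convex, nondecreasing map $t\mapsto t^p$ on $[0,\infty)$ with the norm), so Jensen's inequality gives
$$\Exp\|G\|_B^p=\Exp\Big\|\sum_{i=1}^n|g_i|e_i\Big\|_B^p\ \ge\ \Big\|\sum_{i=1}^n(\Exp|g_i|)\,e_i\Big\|_B^p=\Big(\frac{2}{\pi}\Big)^{p/2}\Big\|\sum_{i=1}^n e_i\Big\|_B^p.$$
Combining the last two displays gives the claimed bound. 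I expect no genuine obstacle here — this is the ``almost immediate consequence'' advertised before the statement — and the only points needing care are the almost-sure reductions ($G\neq0$ and $g_i\neq0$, so that the gradient and its coordinate signs are unambiguous, using smoothness of $\partial B$) and checking that $\|\cdot\|_B^p$ is convex so that Jensen applies in the form used.
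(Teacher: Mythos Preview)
Your proof is correct, and it differs from the paper's argument in an instructive way. The paper introduces an independent copy $G'$ of $G$, writes $\sum_i|\langle\grad_B(G),e_i\rangle|=\sqrt{\pi/2}\,\Exp_{G'}\sum_i|\langle\grad_B(G),e_i\rangle\,g_i'|$, applies Jensen in $G'$, then uses the sign information from Lemma~\ref{l: unconditional grad} together with \eqref{eq: grad max property} to dominate by $\|Z\|_B^p$ for a random vector $Z=\sum_i\sign(g_ig_i')g_i'e_i$ that is again standard Gaussian. You instead prove the \emph{pointwise} bound $\|\grad_B(G)\|_1\le\big\|\sum_{i}e_i\big\|_B$ directly from \eqref{eq: grad max property} (no auxiliary randomness needed), and only then invoke Jensen on the convex function $x\mapsto\|x\|_B^p$ applied to $(|g_1|,\dots,|g_n|)$ to recover the constant $(\pi/2)^{p/2}$. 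Both routes hinge on the same two ingredients (Lemma~\ref{l: unconditional grad} and \eqref{eq: grad max property}), but yours is shorter and yields the extra deterministic fact that $\|\grad_B(x)\|_1\le\|(1,\dots,1)\|_B$ for every $x\neq 0$, while the paper's randomization trick stays entirely at the level of expectations.
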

\begin{proof}
Let $G'$ be an independent copy of $G$. Then, appying Lemma~\ref{l: unconditional grad} and
formula \eqref{eq: grad max property}, as well as standard estimates for the moments of Gaussians, we get
\begin{align*}
\Exp\Big(\sum_{i=1}^n|\langle \grad_B(G),e_i\rangle|\Big)^p
&\leq\big(\pi/2\big)^{p/2}\,\Exp\Big(\sum_{i=1}^n|\langle \grad_B(G),e_i\rangle\langle G',e_i\rangle|\Big)^p\\
&=\big(\pi/2\big)^{p/2}\,
\Exp\Big(\sum_{i=1}^n\langle \grad_B(G),e_i\rangle\sign(\langle G,e_i\rangle\langle G',e_i\rangle)\langle G',e_i\rangle\Big)^p\\
&\leq \big(\pi/2\big)^{p/2}\,
\Exp\Big(\sum_{i=1}^n\langle \grad_B(Z),e_i\rangle\sign(\langle G,e_i\rangle\langle G',e_i\rangle)\langle G',e_i\rangle\Big)^p\\
&=\big(\pi/2\big)^{p/2}\,\Exp\|Z\|_B^p,
\end{align*}
where $Z=\sum_{i=1}^n \sign(\langle G,e_i\rangle\langle G',e_i\rangle)\langle G',e_i\rangle e_i$.
It remains to note that $Z$ is the standard Gaussian vector in $\R^n$.
\end{proof}

Let us state one more simple geometric property of the gradient:
\begin{lemma}\label{l: grad is monotone}
Let $B$ be a smooth convex body in $\R^n$ such that the standard basis $e_1,e_2,\dots,e_n$
is $1$-unconditional with respect to $\|\cdot\|_B$.
Then for any $i\leq n$ and any fixed numbers $x_j$ ($j\neq i$),
the function $|\langle \grad_B(x_1,x_2,\dots,x_n),e_i\rangle|$ of one variable $x_i\in\R$ is non-increasing on $(-\infty,0)$
and non-decreasing on $(0,\infty)$. 
\end{lemma}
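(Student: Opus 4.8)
The plan is to reduce the statement to the elementary fact that a convex $C^1$ function of one real variable has a non-decreasing derivative, and then read off the sign of that derivative from Lemma~\ref{l: unconditional grad}. Fix $i\leq n$ and fix the coordinates $x_j$ for $j\neq i$; set $y:=\sum_{j\neq i}x_j e_j$ and define the one-variable function
$$\phi(t):=\|y+t e_i\|_B,\qquad t\in\R.$$
Since $\|\cdot\|_B$ is convex, $\phi$ is convex on $\R$.

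First I would dispose of the degenerate case $y=0$: then $\phi(t)=|t|\,\|e_i\|_B$, so $\langle\grad_B(t e_i),e_i\rangle=\sign(t)\,\|e_i\|_B$ for $t\neq 0$, and the function $|\langle\grad_B(t e_i),e_i\rangle|\equiv\|e_i\|_B$ is constant; in particular it is non-increasing on $(-\infty,0)$ and non-decreasing on $(0,\infty)$.

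Now assume $y\neq 0$. Then the line $\{y+te_i:\,t\in\R\}$ avoids the origin, so by smoothness of $\partial B$ the norm $\|\cdot\|_B$ is differentiable at every point of this line, with gradient depending continuously on the point (as recorded after \eqref{eq: grad max property}); consequently $\phi$ is continuously differentiable on $\R$ with
$$\phi'(t)=\langle\grad_B(y+t e_i),e_i\rangle.$$
By convexity of $\phi$, the derivative $\phi'$ is non-decreasing on all of $\R$. On the other hand, Lemma~\ref{l: unconditional grad}, applied to the point $y+te_i\neq 0$, gives $0\leq\langle\grad_B(y+t e_i),t e_i\rangle=t\,\phi'(t)$, so $\phi'(t)\geq 0$ for $t>0$ and $\phi'(t)\leq 0$ for $t<0$. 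Hence on $(0,\infty)$ we have $|\phi'(t)|=\phi'(t)$, which is non-decreasing, and on $(-\infty,0)$ we have $|\phi'(t)|=-\phi'(t)$, which is non-increasing because $\phi'$ is non-decreasing there. This is exactly the assertion of the lemma.

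I do not expect a genuine obstacle here; the only points requiring a little care are the passage from ``$\partial B$ smooth'' to ``$\phi\in C^1(\R)$'', which relies on the line $\{y+te_i\}$ not meeting the origin together with the stated continuity of $\grad_B$ on $\R^n\setminus\{0\}$, and the separate treatment of the coordinate axis $y=0$, where $\|\cdot\|_B$ is non-differentiable only at the single point $0$ and the conclusion is trivial.
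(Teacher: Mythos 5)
Your proof is correct. The paper actually states this lemma without proof (it is presented as a ``simple geometric property''), and your argument --- restricting $\|\cdot\|_B$ to the line $\{y+te_i\}$, using convexity to get that $\phi'(t)=\langle\grad_B(y+te_i),e_i\rangle$ is non-decreasing, and reading off its sign from Lemma~\ref{l: unconditional grad} --- is exactly the intended one; the separate handling of the degenerate case $y=0$, where the line passes through the origin, is a worthwhile extra care.
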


The next elementary observation follows directly from property~\eqref{eq: grad max property}.
\begin{lemma}\label{l: grad trivial upper bound}
Let $B$ be an origin-symmetric convex body in $\R^n$ with a smooth boundary.
There is a universal constant $C>0$ such that for all $x\in\R^n\setminus\{0\}$ we have
$$\|\grad_B(x)\|_2\leq C\Exp\|G\|_B.$$
\end{lemma}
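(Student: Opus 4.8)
The plan is to extract the bound directly from the ``dual'' description of the gradient in \eqref{eq: grad max property}, using no geometry beyond that identity. First I would fix $x\in\R^n\setminus\{0\}$ and set $v:=\grad_B(x)$. By \eqref{eq: grad max property}, for every $y\in\R^n$ one has $\langle v,y\rangle\le\sup_{z\neq 0}\langle\grad_B(z),y\rangle=\|y\|_B$. Applying this with $y$ and then with $-y$, and using that $B$ is origin-symmetric so that $\|-y\|_B=\|y\|_B$, gives
$$|\langle v,y\rangle|\le\|y\|_B\qquad\text{for all }y\in\R^n.$$

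Next I would specialize $y=G$ and take expectations, obtaining $\Exp\,|\langle v,G\rangle|\le\Exp\,\|G\|_B$. Since $v$ is a fixed vector, $\langle v,G\rangle$ is a centered Gaussian variable with variance $\|v\|_2^2$, hence $\Exp\,|\langle v,G\rangle|=\sqrt{2/\pi}\,\|v\|_2$. Combining the last two facts yields $\|\grad_B(x)\|_2=\|v\|_2\le\sqrt{\pi/2}\,\Exp\,\|G\|_B$, which is the asserted inequality with $C=\sqrt{\pi/2}$.

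I do not anticipate any real obstacle: this is precisely the ``trivial'' estimate promised by the name of the lemma, and the only ingredients are \eqref{eq: grad max property} and the elementary identity $\Exp|g|=\sqrt{2/\pi}$ for a standard one-dimensional Gaussian $g$. (One could equally use the positive part: from $\langle v,G\rangle\le\|G\|_B$ together with $\|G\|_B\ge 0$ one gets $\max(\langle v,G\rangle,0)\le\|G\|_B$, and taking expectations with $\Exp\max(g,0)=(2\pi)^{-1/2}$ leads to the same conclusion with a slightly worse absolute constant.)
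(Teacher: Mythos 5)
Your argument is correct and is exactly the ``direct consequence of \eqref{eq: grad max property}'' that the paper has in mind (the paper omits the proof): the identity shows $\grad_B(x)$ lies in the polar body, i.e.\ $|\langle \grad_B(x),y\rangle|\le\|y\|_B$ for all $y$, and testing against $G$ with $\Exp|\langle v,G\rangle|=\sqrt{2/\pi}\,\|v\|_2$ gives the bound with $C=\sqrt{\pi/2}$. No gaps.
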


\section{Basic properties of the $\ell$-position}

Let us note that all auxiliary results 
proved in this section work for arbitrary origin-symmetric
convex bodies with smooth boundaries in the $\ell$-position.

\bigskip

\begin{lemma}\label{l: ell equations}
Let $B\subset\R^n$ be a smooth origin-symmetric convex body in the $\ell$-position. Then
$$\Exp\big(\|G\|_B \langle \grad_B(G),u\rangle \langle G,u\rangle\big)=\frac{1}{n}\Exp\|G\|_B^2=\frac{1}{n}\quad
\mbox{ for any }\quad u\in S^{n-1}.$$
\end{lemma}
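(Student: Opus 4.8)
The plan is to derive the identity from the variational characterization of the $\ell$-position together with the homogeneity relations for $\grad_B$. The starting point is that $B$ is in the $\ell$-position, so the function $U\mapsto\ell(B,U)$ has a critical point (relative to the determinant constraint) at $U=\Id_n$; equivalently, $\ell(B,\Id_n)=1$ is the minimum of $\ell(B,U)$ over all $U$ with $|\det U|\geq 1$. First I would perturb: for a fixed unit vector $u$ and $t$ near $0$, consider the rank-one perturbation $U_t:=\Id_n+(t-1)\,u\otimes u$ (so $U_t$ acts as multiplication by $t$ on $\mathrm{span}\{u\}$ and as the identity on $u^\perp$). Then $|\det U_t|=|t|$, and the normalized operator $V_t:=U_t/|t|^{1/n}$ has determinant $1$. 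By optimality, the function $t\mapsto \ell(B,V_t)^2=\Exp\|V_t(G)\|_B^2$ has a minimum at $t=1$, so its derivative there vanishes.

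Next I would compute that derivative. Writing $G=\langle G,u\rangle u + G_u$ with $G_u=\Proj_{u^\perp}G$, we have $U_t(G)=t\langle G,u\rangle u+G_u$, hence $V_t(G)=|t|^{-1/n}\big(t\langle G,u\rangle u+G_u\big)$. Differentiating $\|V_t(G)\|_B$ in $t$ at $t=1$ using $\frac{d}{dt}\|x(t)\|_B=\langle\grad_B(x(t)),x'(t)\rangle$ (valid by smoothness of $\partial B$), together with $V_1(G)=G$, $V_1'(G)=\langle G,u\rangle u-\tfrac1n G$, and the Euler relation $\langle\grad_B(G),G\rangle=\|G\|_B$ from \eqref{eq: grad max property}, gives
$$
\frac{d}{dt}\Big|_{t=1}\|V_t(G)\|_B=\langle\grad_B(G),u\rangle\langle G,u\rangle-\frac1n\|G\|_B.
$$
Multiplying by $2\|G\|_B$ and taking expectations, the vanishing of $\frac{d}{dt}|_{t=1}\Exp\|V_t(G)\|_B^2$ yields exactly
$$
\Exp\big(\|G\|_B\langle\grad_B(G),u\rangle\langle G,u\rangle\big)=\frac1n\Exp\|G\|_B^2,
$$
and the right-hand side equals $\frac1n$ since $\ell(B,\Id_n)^2=\Exp\|G\|_B^2=1$ by hypothesis.

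The main obstacle is making the differentiation under the expectation rigorous: one must justify interchanging $\frac{d}{dt}$ and $\Exp$, and check that $t\mapsto\|V_t(G)\|_B$ is genuinely differentiable at $t=1$ for (almost) every $G$. Smoothness of $\partial B$ gives pointwise differentiability away from $0$, and the needed domination comes from $1$-Lipschitz-type bounds: $\|\grad_B(x)\|_2\leq C\Exp\|G\|_B$ by Lemma~\ref{l: grad trivial upper bound}, while $\|V_t(G)\|_B$ and its $t$-derivative are controlled by $(1+|\langle G,u\rangle|)\|G\|_B$, which is integrable against $\gamma_n$. A clean alternative, avoiding differentiation altogether, is to use the exact minimality inequality $\Exp\|V_t(G)\|_B^2\geq 1$ for all $t>0$, expand, and let $t\to1$ from both sides to squeeze out the identity; I would present whichever is shorter. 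One should also remark that by rotational invariance of the $\ell$-position it suffices to prove the identity for a single $u$, say $u=e_1$, though the perturbation argument works verbatim for every $u\in S^{n-1}$.
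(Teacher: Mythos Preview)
Your proposal is correct and rests on the same variational idea as the paper: the $\ell$-position makes $U\mapsto\Exp\|UG\|_B^2$ minimal among determinant-one operators, and the first-order condition at $U=\Id_n$ is exactly the claimed identity. The execution differs, though. You normalize the rank-one perturbation to have determinant one, differentiate $t\mapsto\Exp\|V_t G\|_B^2$ at $t=1$, and read off the identity directly from the vanishing derivative; this is clean but requires the dominated-convergence justification you outline. The paper instead works one-sidedly: it takes a diagonal determinant-one perturbation $D_\varepsilon$ with $\varepsilon>0$, expands $\Exp\|D_\varepsilon G\|_B^2$ via $\|x\|_B=\langle\grad_B(x),x\rangle$, uses only the inequality $\Exp\|G\|_B^2\le\Exp\|D_\varepsilon G\|_B^2$, and lets $\varepsilon\to0$ to obtain
\[
n\,\Exp\big(\|G\|_B\langle\grad_B(G),e_i\rangle\langle G,e_i\rangle\big)\le\Exp\|G\|_B^2\quad\text{for each }i.
\]
Since these $n$ inequalities sum to the trivial identity $\Exp\|G\|_B^2=\Exp\|G\|_B^2$, each is forced to be an equality, and rotation-invariance of the $\ell$-position extends this from $e_i$ to all $u\in S^{n-1}$. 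The paper's route trades your differentiation-under-the-integral step for a soft ``inequality plus summation'' trick; your route is shorter once the domination is in hand and needs no appeal to rotation-invariance.
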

\begin{proof}
We will show that
$$\Exp\big(\|G\|_B  \langle\grad_B(G),e_i\rangle \langle G,e_i\rangle\big)=\frac{1}{n}\Exp\|G\|_B^2=\frac{1}{n},\quad i\leq n;$$
the statement will then follow by rotation-invariance of the $\ell$-position.
Fix for a moment any $i\leq n$, take $\varepsilon\in(0,1)$ and define a diagonal operator $D=D_\varepsilon$ via its diagonal entries:
$$d_{jj}=\begin{cases}(1-\varepsilon)^{n-1},&\mbox{ if }j=i;\\(1-\varepsilon)^{-1},&\mbox{ otherwise.}\end{cases}$$
We clearly have $\det D=1$, and, in view of \eqref{eq: grad max property},
\begin{align*}
\Exp \|DG\|_B^2&=\Exp\bigg(\sum_{j=1}^n \langle\grad_B(DG),e_j\rangle d_{jj} \langle G,e_j\rangle \bigg)^2\\
&=\Exp\bigg(\sum_{j=1}^n \langle\grad_B(DG),e_j\rangle \langle G,e_j\rangle
+\sum_{j=1}^n \langle\grad_B(DG),e_j\rangle (d_{jj}-1) \langle G,e_j\rangle \bigg)^2\\
&\leq\Exp\bigg(\sum_{j=1}^n \langle\grad_B(G),e_j\rangle \langle G,e_j\rangle
+\sum_{j=1}^n \langle\grad_B(DG),e_j\rangle (d_{jj}-1) \langle G,e_j\rangle \bigg)^2\\
&=\Exp\bigg(\|G\|_B-\varepsilon(n-1) \langle\grad_B(DG),e_i\rangle\langle G,e_i\rangle\\
&\hspace{1cm}+\varepsilon\sum_{j\neq i} \langle\grad_B(DG),e_j\rangle \langle G,e_j\rangle
+o(\varepsilon)\|G\|_B\bigg)^2\\
&=\Exp\bigg(\|G\|_B-\varepsilon n \langle\grad_B(DG),e_i\rangle\langle G,e_i\rangle\\
&\hspace{1cm}+\varepsilon\sum_{j=1}^n \langle\grad_B(DG),e_j\rangle \langle G,e_j\rangle
+o(\varepsilon)\|G\|_B\bigg)^2\\
&=\Exp\bigg(\|G\|_B+\varepsilon\sum_{j=1}^n \langle\grad_B(DG),e_j\rangle \langle G,e_j\rangle\bigg)^2\\
&\hspace{1cm}-2\varepsilon n\Exp\big(\|G\|_B \langle\grad_B(DG),e_i\rangle\langle G,e_i\rangle\big)+o(\varepsilon).
\end{align*}
Further, as $\grad_B(\cdot)$ is continuous at every point of $\R^n\setminus\{0\}$, we get
\begin{align*}
\Exp&\bigg(\|G\|_B+\varepsilon\sum_{j=1}^n \langle\grad_B(DG),e_j\rangle \langle G,e_j\rangle\bigg)^2
-2\varepsilon n\Exp\big(\|G\|_B \langle\grad_B(DG),e_i\rangle\langle G,e_i\rangle\big)\\
&\leq
\Exp\bigg(\|G\|_B+\varepsilon\sum_{j=1}^n \langle\grad_B(G),e_j\rangle \langle G,e_j\rangle\bigg)^2
-2\varepsilon n\Exp\big(\|G\|_B\langle\grad_B(G),e_i\rangle\langle G,e_i\rangle\big)+o(\varepsilon)\\
&=(1+2\varepsilon)\Exp\|G\|_B^2-2\varepsilon n\Exp\big(\|G\|_B\langle\grad_B(G),e_i\rangle\langle G,e_i\rangle\big)+o(\varepsilon).
\end{align*}
On the other hand, in view of the definition of the $\ell$-position, we have $\Exp \|G\|_B^2\leq \Exp\|DG\|_B^2$
for any $\varepsilon$. Combining this with the above inequalities, we obtain
$$\Exp \|G\|_B^2\leq (1+2\varepsilon)\Exp\|G\|_B^2-2\varepsilon n\Exp\big(\|G\|_B\langle\grad_B(G),e_i\rangle\langle G,e_i\rangle\big)+o(\varepsilon).$$
Taking the limit when $\varepsilon\to 0$, we get
$$ n\Exp\big(\|G\|_B\langle\grad_B(G),e_i\rangle\langle G,e_i\rangle\big)\leq \Exp\|G\|_B^2,\quad\quad i\leq n.$$
At the same time, obviously
$$\sum_{i=1}^n \Exp\big(\|G\|_B \langle\grad_B(G),e_i\rangle\langle G,e_i\rangle\big)=\Exp\|G\|_B^2.$$
Thus, the above relations must be equalities for all $i$.
\end{proof}

\begin{lemma}\label{l: expectation}
For any $\delta>0$ and $p\in[1,\infty)$ there are numbers $n_0=n_0(\delta)$ depending only on $\delta$
and $c_{\delta,p}>0$ depending on $\delta$ and $p$ with the following property.
Let $n\geq n_0$, and let $B$ be an origin-symmetric convex body in $\R^n$ such that
\begin{equation}\label{eq: aux 349}
\Prob\big\{\|g x\|_B^p\geq \Exp\|G\|_B^p\big\}\leq n^{-\delta}\quad\mbox{ for any vector }x\in S^{n-1}.
\end{equation}
Then
$$\frac{\Exp \|G\|_B}{\Lip(\|\cdot\|_B)}\geq c_{\delta,p}\sqrt{\log n}.$$
\end{lemma}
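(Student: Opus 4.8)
The plan is to test the hypothesis \eqref{eq: aux 349} against the single worst-case direction and then read off the conclusion from the lower tail of a one-dimensional Gaussian. Set $L:=\Lip(\|\cdot\|_B)$; since $\|\cdot\|_B$ is a norm we have $L=\max_{x\in S^{n-1}}\|x\|_B$, and by continuity this maximum is attained at some $x^*\in S^{n-1}$. Writing $M_p:=\big(\Exp\|G\|_B^p\big)^{1/p}$ and using that $\|gx^*\|_B=|g|\,L$, the assumption applied to $x^*$ (which, among all $x\in S^{n-1}$, maximizes $\Prob\{|g|\geq M_p/\|x\|_B\}$) becomes
$$\Prob\big\{|g|\geq M_p/L\big\}\leq n^{-\delta}.$$

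First I would invoke the elementary lower bounds for the standard Gaussian tail: $\Prob\{|g|\geq t\}$ is bounded below by an absolute positive constant when $t\leq 1$, and by $c\,e^{-t^2}$ when $t\geq 1$. If $M_p/L\leq 1$, the first bound forces $n$ to be bounded above by a quantity depending on $\delta$ only; hence, choosing $n_0(\delta)$ large enough we may assume $M_p/L>1$ for $n\geq n_0(\delta)$, and then the second bound gives $(M_p/L)^2\geq\delta\log n-\log(1/c)$. Enlarging $n_0(\delta)$ if necessary so that the right-hand side is at least $\tfrac{\delta}{2}\log n$, we arrive at $M_p\geq\sqrt{(\delta/2)\log n}\,L$.

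It remains to pass from the $L_p$-norm $M_p$ of $\|G\|_B$ to its mean. Since $\|\cdot\|_B/L$ is $1$-Lipschitz, Theorem~\ref{th: gauss concentration} (applied to $\|\cdot\|_B/L$ and to its negative) shows that $\|G\|_B$ has sub-Gaussian fluctuations of scale $L$ about its mean, whence the routine moment estimate $\big(\Exp\big|\|G\|_B-\Exp\|G\|_B\big|^p\big)^{1/p}\leq C\sqrt p\,L$; the triangle inequality in $L_p$ then yields $M_p\leq\Exp\|G\|_B+C\sqrt p\,L$, so that
$$\Exp\|G\|_B\geq\Big(\sqrt{(\delta/2)\log n}-C\sqrt p\Big)\,L.$$
For $n$ above a threshold $n_1=n_1(\delta,p)$ the right-hand side is at least $\tfrac12\sqrt{(\delta/2)\log n}\,L$, which proves the claim in that range. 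For the remaining values $n_0(\delta)\leq n<n_1(\delta,p)$ I would fall back on the $p$-free universal bound $\Exp\|G\|_B\geq\sqrt{2/\pi}\,L$ — which follows from $L=\sup_{y\in B^\circ}\|y\|_2$ together with $\Exp\|G\|_B=\Exp\sup_{y\in B^\circ}\langle G,y\rangle\geq\Exp|\langle G,y^*\rangle|=\sqrt{2/\pi}\,\|y^*\|_2$ for an extremal $y^*$ in the polar body $B^\circ$ (using symmetry of $B$) — combined with $\sqrt{\log n}\leq\sqrt{\log n_1(\delta,p)}$. Taking $c_{\delta,p}$ to be the minimum of $\tfrac12\sqrt{\delta/2}$ and $\sqrt{2/\pi}/\sqrt{\log n_1(\delta,p)}$ completes the argument.

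The one genuinely delicate point is arranging that $n_0$ depends on $\delta$ alone and not also on $p$; this is exactly what forces the use of the universal estimate $\Exp\|G\|_B\gtrsim\Lip(\|\cdot\|_B)$ for the intermediate range of $n$, while everything else reduces to standard Gaussian tail and moment bounds.
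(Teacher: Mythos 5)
Your proof is correct and follows essentially the same route as the paper: test \eqref{eq: aux 349} in an extremal direction $x^*$ with $\|x^*\|_B=\Lip(\|\cdot\|_B)$, invert the lower Gaussian tail bound to conclude $\big(\Exp\|G\|_B^p\big)^{1/p}\geq c\sqrt{\delta\log n}\,\Lip(\|\cdot\|_B)$, and then compare the $p$-th moment of $\|G\|_B$ to its mean. The only (cosmetic) divergence is in that last comparison, where the paper invokes the multiplicative equivalence of Gaussian moments $\Exp\|G\|_B\geq c_p\big(\Exp\|G\|_B^p\big)^{1/p}$ from \cite[Corollary~3.2]{LT} and hence needs no further case analysis, whereas you derive the additive bound $M_p\leq \Exp\|G\|_B+C\sqrt{p}\,\Lip(\|\cdot\|_B)$ from concentration and then patch the intermediate range $n_0(\delta)\leq n<n_1(\delta,p)$ with the universal estimate $\Exp\|G\|_B\geq\sqrt{2/\pi}\,\Lip(\|\cdot\|_B)$ --- a self-contained and equally valid way to keep $n_0$ depending on $\delta$ alone.
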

\begin{proof}
Without loss of generality, $n^\delta$ is large and $\Lip(\|\cdot\|_B)=1$.
Fix a vector $x\in S^{n-1}$ with $\|x\|_B=1$. Standard deviation estimates for Gaussian variables imply
$$\Prob\big\{\|gx\|_B\geq \sqrt{\log n^{\delta}}\big\}\geq n^{-\delta},$$
whence, in view of \eqref{eq: aux 349}, we have
$$\Exp\|G\|_B^p\geq \big(\log n^{\delta}\big)^{p/2}.$$
It remains to note that $\Exp\|G\|_B\geq c_p\big(\Exp\|G\|_B^p\big)^{1/p}$
for some $c_p>0$ depending only on $p$ (see, for example, \cite[Corollary~3.2]{LT}).
\end{proof}

Together Lemmas~\ref{l: ell equations} and~\ref{l: expectation} imply

\begin{prop}\label{p: ell lipschitz}
There are universal constants $n_o\in\N$ and $c>0$ with the following property.
Let $n\geq n_0$, and let $B$ be an origin-symmetric convex body in $\R^n$ with a smooth boundary in the $\ell$-position.
Then
$$\frac{\Exp \|G\|_B}{\Lip(\|\cdot\|_B)}\geq c\sqrt{\log n}.$$
\end{prop}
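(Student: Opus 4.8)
The plan is to combine Lemma~\ref{l: ell equations} with the Lipschitz property of the norm. The proof I have in mind runs as follows. By Corollary~\ref{c:smooth} (smooth approximation) and Proposition~\ref{p: ell lipschitz} it suffices to verify the hypothesis~\eqref{eq: aux 349} of Lemma~\ref{l: expectation} for a smooth body $B$ in the $\ell$-position, and then simply invoke that lemma. So the real content is: for a smooth origin-symmetric $B$ in the $\ell$-position, for every $x\in S^{n-1}$,
$$\Prob\big\{\|gx\|_B^p\geq \Exp\|G\|_B^p\big\}\leq n^{-\delta}$$
for some universal $\delta>0$ (which we are free to make small). Actually, to get Proposition~\ref{p: ell lipschitz} as stated, the cleanest route is to bypass Lemma~\ref{l: expectation} and argue directly: show $\Exp\|G\|_B\geq c\sqrt{\log n}\,\Lip(\|\cdot\|_B)$.

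First I would normalize $\Lip(\|\cdot\|_B)=1$, so that for the unit vector $u$ achieving $\|u\|_B=\Lip(\|\cdot\|_{B^\circ})$... more simply: fix $u\in S^{n-1}$. The key pointwise inequality is that the Lipschitz constant of $\|\cdot\|_B$ equals $\sup_y\|\grad_B(y)\|_2$, and by~\eqref{eq: grad max property} the gradient has unit-norm values at most this Lipschitz constant; combined with Lemma~\ref{l: ell equations},
$$\frac{1}{n}=\Exp\big(\|G\|_B\,\langle\grad_B(G),u\rangle\,\langle G,u\rangle\big)\leq \Lip(\|\cdot\|_B)\,\Exp\big(\|G\|_B\,|\langle G,u\rangle|\big).$$
Now apply Cauchy--Schwarz: $\Exp\big(\|G\|_B\,|\langle G,u\rangle|\big)\leq \big(\Exp\|G\|_B^2\big)^{1/2}\big(\Exp|\langle G,u\rangle|^2\big)^{1/2}=\big(\Exp\|G\|_B^2\big)^{1/2}$, which gives only $\Exp\|G\|_B^2\geq n^{-2}\Lip^{-2}$ — far too weak. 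The fix is to not use Cauchy--Schwarz so crudely: decompose $G=\langle G,u\rangle u+G_u^\perp$ where $G_u^\perp$ is the projection orthogonal to $u$, independent of $g:=\langle G,u\rangle$; on the event $|g|\leq t$ one has, by $1$-Lipschitzness, $\|G\|_B\leq \|G_u^\perp\|_B+t\le \|G_u^\perp\|_B+t$, and on the complementary event $|g|>t$ one controls the tail using that $\|G\|_B\leq \|G_u^\perp\|_B+|g|$ together with Gaussian tail bounds. A more robust alternative, and the one I would actually carry out, is: use Lemma~\ref{l: ell equations} with a \emph{random} direction, or rather sum/average the identity over an orthonormal basis $e_1,\dots,e_n$ to get $\Exp\big(\|G\|_B\sum_i \langle\grad_B(G),e_i\rangle\langle G,e_i\rangle\big)=\Exp\|G\|_B^2$, which is just $\Exp\|G\|_B\cdot\|G\|_B$ by~\eqref{eq: grad max property} — a tautology, confirming that the per-coordinate identity is where the strength lies.

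The honest route, matching the structure the paper has set up, is therefore via Lemma~\ref{l: expectation}, so the step to prove is the small-ball/deviation hypothesis~\eqref{eq: aux 349}. Fix $u\in S^{n-1}$ with $\|u\|_B\leq \Lip(\|\cdot\|_B)$ — we may take $\|u\|_B$ as large as $\Lip(\|\cdot\|_B)=:L$. Write $g=\langle G,u\rangle$. Then $\|G\|_B\geq \|gu\|_B-\|G-gu\|_B\ge |g|\,\|u\|_B - L\,\|G-gu\|_2$ by Lipschitzness... this cross term is again awkward. The clean statement I want is the reverse: $\Exp\|G\|_B^p \geq c_p\,L^p (\log n)^{p/2}$ would follow if $\Prob\{|g|\ge \sqrt{\log n^\delta}\}\ge n^{-\delta}$ \emph{and} on that event $\|G\|_B\gtrsim |g|$; the latter needs a lower bound on $\|G\|_B$ in terms of the $u$-component, which holds \emph{in expectation} via Lemma~\ref{l: ell equations} but not pointwise. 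I expect the main obstacle to be precisely this: converting the second-moment balancing identity of Lemma~\ref{l: ell equations} (an $L_2$-type statement) into a genuine lower bound on $\Exp\|G\|_B$. The resolution I anticipate is to pick $u$ to be the direction where $\|\cdot\|_B$ attains a ``nearly extremal'' value, combine $\|G\|_B\geq \langle\grad_B(y),G\rangle$ for a well-chosen fixed $y$ (a contact-type point), and feed the one-sided Gaussian tail $\Prob\{\langle\grad_B(y),G\rangle\ge t\|\grad_B(y)\|_2\}\approx e^{-t^2/2}$ together with $\|\grad_B(y)\|_2$ being comparable to $L$ for the right $y$; then Lemma~\ref{l: ell equations} is used only to certify that such a $y$ with $\|\grad_B(y)\|_2\gtrsim L$ exists in a way compatible with~\eqref{eq: aux 349}. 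Once~\eqref{eq: aux 349} is in hand with some fixed universal $\delta$, Lemma~\ref{l: expectation} yields $\Exp\|G\|_B/\Lip(\|\cdot\|_B)\ge c\sqrt{\log n}$ directly, and the smooth-boundary hypothesis is available by assumption, completing the proof of Proposition~\ref{p: ell lipschitz}.
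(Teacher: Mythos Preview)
Your proposal is not a proof; it is a sequence of attempts, each of which you correctly abandon, followed by a vague plan that still does not close the gap. The central obstacle is real and you have not gotten past it: verifying hypothesis~\eqref{eq: aux 349} of Lemma~\ref{l: expectation} for \emph{every} $x\in S^{n-1}$ is essentially equivalent to the conclusion. For the unit vector $x$ with $\|x\|_B=\Lip(\|\cdot\|_B)=:L$, the condition $\Prob\{|g|\,\|x\|_B\geq (\Exp\|G\|_B^p)^{1/p}\}\leq n^{-\delta}$ forces $(\Exp\|G\|_B^p)^{1/p}/L\gtrsim \sqrt{\delta\log n}$, which by moment equivalence is exactly $\Exp\|G\|_B\gtrsim L\sqrt{\log n}$. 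So Lemma~\ref{l: expectation} cannot be invoked as a black box; the $\sqrt{\log n}$ factor has to be \emph{produced}, not assumed. Your contact-point idea at the end also falls short: from $\|G\|_B\geq |\langle\grad_B(y),G\rangle|$ for a single fixed $y$ one gets only $\Exp\|G\|_B\geq \sqrt{2/\pi}\,\|\grad_B(y)\|_2\leq \sqrt{2/\pi}\,L$, with no logarithmic gain. One direction never yields $\sqrt{\log n}$; many are needed.

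The paper supplies the missing ingredient via a Dvoretzky--Rogers type construction. Build orthogonal vectors $y_1,\dots,y_n$ with $\|y_k\|_2=\Lip(\|\cdot\|_{B\cap\spn\{y_1,\dots,y_{k-1}\}^\perp})$ and $\|y_k\|_B=\|y_k\|_2^2$, and split into two cases at $m=\lfloor\sqrt{n}\rfloor$. If $\|y_m\|_2$ is still large, then $\Exp\|G\|_B\geq \frac12\Exp\max_{i\leq m}|g_i|\,\|y_i\|_2$ extracts the factor $\sqrt{\log m}\sim\sqrt{\log n}$ from the maximum of $m$ independent Gaussians. If $\|y_m\|_2$ is small, then on $H_m=\spn\{y_1,\dots,y_m\}^\perp$ the norm has small Lipschitz constant, so $\|\Proj_{H_m}G\|_B$ concentrates sharply near $q:=\Exp\|G\|_B$; combined with the one-direction tail $\Prob\{\|G\|_B\gtrsim L\sqrt{\log n}\}\gg n^{-1/2}$ coming from $y_1$, this forces $\Exp\big(\|G\|_B\sum_{i\leq m}\|y_i\|_2^{-2}\langle\grad_B(G),y_i\rangle\langle G,y_i\rangle\big)$ to be large. \emph{This} is where Lemma~\ref{l: ell equations} enters: the balancing identity caps that same expectation by $\frac{m}{n}\Exp\|G\|_B^2\lesssim q^2/\sqrt{n}$, and comparing the two bounds yields $q\gtrsim L\sqrt{\log n}$. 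The role of Lemma~\ref{l: ell equations} is thus far more delicate than the one-line Cauchy--Schwarz application you tried.
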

\begin{proof}
We can assume that $n$ is large.
We will construct an orthogonal basis $y_1,y_2,\dots,y_n$ in $\R^n$ as follows.
First, there is a vector $y_1$ with $\|y_1\|_2=\Lip(\|\cdot\|_B)$
such that $B\subset \big\{x\in\R^n:\,|\langle x,y_1\rangle|\leq 1\big\}$.
We set $H_1:={y_1}^\perp$. Now, assuming that $y_1,y_2,\dots,y_k$ are constructed, choose a vector
$y_{k+1}\in H_k:=\spn\{y_1,y_2,\dots,y_k\}^\perp$ with
$\|y_{k+1}\|_2=\Lip(\|\cdot\|_{B\cap H_k})$ such that
$B\cap H_k\subset \big\{x\in H_k:\,|\langle x,y_{k+1}\rangle|\leq 1\big\}$.

Note that $\|y_k\|_B=\|y_k\|_2^2$ for any $k\leq n$ and that $\|y_{k+1}\|_2\leq \|y_{k}\|_2$ for all $k\leq n-1$.
Now, set $q:=\Exp \|G\|_B$, $m:=\lfloor \sqrt{n}\rfloor$, and consider two cases.

\begin{itemize}

\item Suppose that $\|y_{m}\|_2\geq \frac{\sqrt{q\,\Lip(\|\cdot\|_B)}}{(\log n)^{1/4}}$.
In view of the triangle inequality and the definition of $y_i$'s, we have
$$\Exp\|G\|_B\geq\Exp\Big\|\sum_{i=1}^m \frac{y_i}{\|y_i\|_2}g_i\Big\|_B
\geq \frac{1}{2}\Exp\max_{i\leq m}\frac{|g_i|\|y_i\|_B}{\|y_i\|_2}
=\frac{1}{2}\Exp\max_{i\leq m}|g_i|\|y_i\|_2$$
(here and further in the proof $g_i$'s are independent standard Gaussians).
Then standard estimates for the maximum of independent Gaussian variables \cite[p.~302]{DN 2003},
together with the assumption on the Euclidean norm of $y_i$'s, imply
$$q=\Exp\|G\|_B\geq \frac{\sqrt{q\,\Lip(\|\cdot\|_B)}}{2(\log n)^{1/4}}
\Exp\max_{i\leq m}|g_i|>\frac{\sqrt{q\,\Lip(\|\cdot\|_B)}(\log n)^{1/4}}{4}.$$
Hence, we get
$$\frac{q}{\Lip(\|\cdot\|_B)}>\frac{1}{16}\sqrt{\log n}.$$

\item Assume that $\|y_{m}\|_2< \frac{\sqrt{q\,\Lip(\|\cdot\|_B)}}{(\log n)^{1/4}}$.
Thus, the Lipschitz constant of $\|\cdot\|_{B\cap H_m}$
is less than $\frac{\sqrt{q\,\Lip(\|\cdot\|_B)}}{(\log n)^{1/4}}$, and Theorem~\ref{th: gauss concentration},
together with the relation $\Exp\|G\|_B\geq \Exp\|\Proj_{H_m}G\|_B$, imply
\begin{equation}\label{eq: aux 94}
\Prob\big\{\|\Proj_{H_m} G\|_{B\cap H_m}\geq q+4\sqrt{q\,\Lip(\|\cdot\|_B)}(\log n)^{1/4}\big\}\leq \frac{1}{n^2}.
\end{equation}
On the other hand, in view of the choice of $y_1$ and standard deviation estimates for a Gaussian variable, we have
$$\Prob\Big\{\|G\|_{B}\geq \frac{1}{2}\sqrt{\log n}\,\Lip(\|\cdot\|_B)\Big\}
\geq \frac{1}{2}\Prob\Big\{\frac{\|g y_1\|_{B}}{\|y_1\|_2}\geq \frac{1}{2}\sqrt{\log n}\,\Lip(\|\cdot\|_B)\Big\}
> \frac{1}{\sqrt{n}}.$$
Clearly, in view of \eqref{eq: grad max property} we have
$$\|G\|_B=\sum_{i=1}^n \|y_i\|_2^{-2}\langle \grad_B(G),y_i\rangle \langle G,y_i\rangle$$
and
\begin{align*}
\|\Proj_{H_m} G\|_{B\cap H_m}&=\sum_{i=m+1}^n\|y_i\|_2^{-2}\langle \grad_B( \Proj_{H_m} G),y_i\rangle \langle G,y_i\rangle\\
&\geq \sum_{i=m+1}^n\|y_i\|_2^{-2}\langle \grad_B( G),y_i\rangle \langle G,y_i\rangle.
\end{align*}
Thus, \eqref{eq: aux 94} yields
$$\Prob\Big\{\|G\|_B-\sum_{i=1}^m \|y_i\|_2^{-2}\langle \grad_B(G),y_i\rangle \langle G,y_i\rangle\geq q+4\sqrt{q\,\Lip(\|\cdot\|_B)}(\log n)^{1/4}\Big\}\leq \frac{1}{n^2}.$$
The last inequality, together with the above deviation estimates for $\|G\|_B$, implies
\begin{align*}
\Prob\bigg\{&\|G\|_B\sum_{i=1}^m \|y_i\|_2^{-2}\langle \grad_B(G),y_i\rangle \langle G,y_i\rangle
\geq \frac{1}{2}\sqrt{\log n}\,\Lip(\|\cdot\|_B)\cdot\\
&\Big(\frac{1}{2}\sqrt{\log n}\,\Lip(\|\cdot\|_B)-q-4\sqrt{q\,\Lip(\|\cdot\|_B)}(\log n)^{1/4}\Big)\bigg\}
\geq \frac{1}{\sqrt{n}}-\frac{1}{n^2}>\frac{1}{2\sqrt{n}},
\end{align*}
whence
\begin{align*}
\Exp\bigg(&\|G\|_B\sum_{i=1}^m \|y_i\|_2^{-2}\langle \grad_B(G),y_i\rangle \langle G,y_i\rangle\bigg)\\
&>\frac{\Lip(\|\cdot\|_B)\sqrt{\log n}}{4\sqrt{n}}\Big(\frac{1}{2}\sqrt{\log n}\,\Lip(\|\cdot\|_B)
-q-4\sqrt{q\,\Lip(\|\cdot\|_B)}(\log n)^{1/4}\Big).
\end{align*}
On the other hand, by Lemma~\ref{l: ell equations} and in view of the equivalence of moments of $\|G\|_B$
(see \cite[Corollary~3.2]{LT}),
$$\Exp\bigg(\|G\|_B\sum_{i=1}^m \|y_i\|_2^{-2}\langle \grad_B(G),y_i\rangle \langle G,y_i\rangle\bigg)
=\frac{m}{n}\Exp\|G\|^2_B\leq \frac{Cq^2}{\sqrt{n}}$$
for a universal constant $C>0$.
Thus,
$$Cq^2\geq \frac{\Lip(\|\cdot\|_B)\sqrt{\log n}}{4}\Big(\frac{1}{2}\sqrt{\log n}\,\Lip(\|\cdot\|_B)
-q-4\sqrt{q\,\Lip(\|\cdot\|_B)}(\log n)^{1/4}\Big).$$
Solving for $q$, we get
$$q\geq c'\Lip(\|\cdot\|_B)\sqrt{\log n}$$
for some constant $c'>0$.
\end{itemize}
\end{proof}

As a consequence of the above proposition, we get
\begin{lemma}\label{l: norm to gradient}
There are universal constants $n_0\in\N$ and $c>0$ with the following property.
Let $n\geq n_0$, let $B$ be an origin-symmetric convex body in $\R^n$ with a smooth boundary in the $\ell$-position.
Assume that
$$\Exp\|G\|_B\leq n^{c}\,\Exp\|\grad_B(G)\|_2.$$
Then for all $p\in[1,c\log n]$ we have
$$\Exp\|G\|_B^{2p}\leq n^{1/32}\,\Exp_{\gamma_n}\|\grad(\|\cdot\|_B^p)\|_2^2
= n^{1/32}p^2\,\Exp\big(\|G\|_B^{2p-2}\|\grad_B(G)\|_2^2\big),$$
where $\grad(\|\cdot\|_B^p)$ is the gradient of the $p$-th power of the norm $\|\cdot\|_B$.
\end{lemma}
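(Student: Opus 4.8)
The plan is to reduce the claim to the stated inequality and then bound its two sides separately in terms of $q:=\Exp\|G\|_B$; the only genuine work is a lower bound on the right‑hand side that uses the hypothesis.

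First I would dispose of the equality, which is just the chain rule $\grad(\|\cdot\|_B^p)(x)=p\|x\|_B^{p-1}\grad_B(x)$, so that it remains to prove $\Exp\|G\|_B^{2p}\le n^{1/32}p^2D$, where $D:=\Exp\big(\|G\|_B^{2p-2}\|\grad_B(G)\|_2^2\big)$. Two inputs are available from the start: Proposition~\ref{p: ell lipschitz} gives $\Lip(\|\cdot\|_B)\le q/(c_0\sqrt{\log n})$ for a universal $c_0>0$, and Lemma~\ref{l: grad trivial upper bound} gives $\|\grad_B(x)\|_2\le Cq$ for every $x\ne0$. Applying Theorem~\ref{th: gauss concentration} to $\pm\|\cdot\|_B/\Lip(\|\cdot\|_B)$ and using $q/\Lip(\|\cdot\|_B)\ge c_0\sqrt{\log n}$ yields, on one side, the lower‑tail estimate $\Prob\{\|G\|_B<q/2\}\le\exp\!\big(-q^2/(8\Lip(\|\cdot\|_B)^2)\big)\le n^{-c_0^2/8}$ and, on the other side, after integrating the upper tail, the bound $\Exp\|G\|_B^{2p}\le\big(2q+C\sqrt p\,\Lip(\|\cdot\|_B)\big)^{2p}\le(3q)^{2p}\le n^{2c\log 3}\,q^{2p}$, the last two inequalities holding once $c$ is a small enough universal constant and $p\le c\log n$.

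The core of the argument is the matching lower bound $D\ge n^{-O(c)}q^{2p}$, and this is where the hypothesis $q\le n^c\,\Exp\|\grad_B(G)\|_2$ enters. On the event $\Event:=\{\|G\|_B\ge q/2\}$ one has $\|G\|_B^{2p-2}\ge(q/2)^{2p-2}$, so $D\ge(q/2)^{2p-2}\,\Exp\big(\chi_\Event\|\grad_B(G)\|_2^2\big)$. I would then split $\Exp\big(\chi_\Event\|\grad_B(G)\|_2^2\big)=\Exp\|\grad_B(G)\|_2^2-\Exp\big(\chi_{\Event^c}\|\grad_B(G)\|_2^2\big)$: by Jensen and the hypothesis the first term is at least $(\Exp\|\grad_B(G)\|_2)^2\ge n^{-2c}q^2$, while by $\|\grad_B(G)\|_2\le Cq$ and the tail bound the second is at most $C^2q^2\,n^{-c_0^2/8}$. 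Choosing $c$ small enough that $2c<c_0^2/8$, the subtracted term is at most $\frac{1}{2}n^{-2c}q^2$ for $n$ large, so $\Exp\big(\chi_\Event\|\grad_B(G)\|_2^2\big)\ge\frac{1}{2}n^{-2c}q^2$ and hence, using $2^{2p-2}\le n^{2c\log 2}$, $D\ge\frac{1}{2}n^{-2c(1+\log 2)}q^{2p}$. Combining with the upper bound of the previous paragraph gives $\Exp\|G\|_B^{2p}\le 2\,n^{2c(1+\log 6)}D\le n^{1/32}D\le n^{1/32}p^2D$, provided $c$ additionally satisfies $2c(1+\log 6)\le 1/64$ and $n\ge n_0$.

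The step I expect to be the real obstacle is the cancellation in the preceding paragraph. A priori the $\ell_2$‑mass of $\grad_B(G)$ might be carried mostly by the atypical event $\{\|G\|_B<q/2\}$, on which $\|\grad_B(G)\|_2$ can be of order $q$ — far larger than the average value $n^{-c}q$ that the hypothesis merely guarantees — and then the weight $\|G\|_B^{2p-2}$ would destroy this contribution. What rules this out is that $\{\|G\|_B<q/2\}$ has probability only $n^{-c_0^2/8}$, a polynomial saving coming from Proposition~\ref{p: ell lipschitz} together with Gaussian concentration; this dominates $n^{-2c}$ precisely because $c$ is taken small relative to $c_0$. Every remaining loss is a factor $n^{O(c)}$, which is why $c$ must be a sufficiently small universal constant and why the statement has the slack $n^{1/32}$ rather than a constant. (The unconditionality and smoothness hypotheses are not needed for this lemma beyond making $\grad_B$ defined $\gamma_n$‑almost everywhere; they are inherited from the setting of Theorem~\ref{th: superconcentration}.)
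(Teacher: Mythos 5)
Your argument is correct and follows essentially the same route as the paper: both proofs reduce the claim to showing $\Exp\|G\|_B^{2p}\leq n^{O(c)}(\Exp\|G\|_B)^{2p}$ (via Proposition~\ref{p: ell lipschitz} and Gaussian concentration) and $\Exp\big(\|G\|_B^{2p-2}\|\grad_B(G)\|_2^2\big)\geq n^{-O(c)}(\Exp\|G\|_B)^{2p}$ (via the hypothesis, the pointwise bound of Lemma~\ref{l: grad trivial upper bound}, and the polynomially small lower tail of $\|G\|_B$). The only divergence is in the second step: you lower-bound $\Exp\big(\chi_{\Event}\|\grad_B(G)\|_2^2\big)$ by Jensen plus subtraction of the bad-event contribution, whereas the paper first shows $\Prob\big\{\|\grad_B(G)\|_2\geq \tfrac12 n^{-c}\Exp\|G\|_B\big\}\geq \tfrac{1}{2C}n^{-c}$ by a reverse-Markov splitting and then intersects this with $\{\|G\|_B\geq\tfrac12\Exp\|G\|_B\}$ --- an immaterial difference, as both hinge on $c$ being small relative to the concentration exponent.
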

\begin{proof}
We can assume that $n$ is large.
Proposition~\ref{p: ell lipschitz} and Theorem~\ref{th: gauss concentration}
imply that $\|G\|_B\geq\frac{1}{2}\Exp\|G\|_B$ with probability at least $1-n^{-c'}$
for a universal constant $c'\in(0,1/64]$. Now, set $c:=c'/2$ and assume that
$\Exp\|G\|_B\leq n^{c}\,\Exp\|\grad_B(G)\|_2$.
In view of Lemma~\ref{l: grad trivial upper bound} we have $\|\grad_B(x)\|_2\leq C\Exp\|G\|_B$
for all non-zero vectors $x$ and a universal constant $C>0$.
Hence, denoting by $\Event$ the event that $\|\grad_B(G)\|_2\geq \frac{1}{2}n^{-c}\Exp\|G\|_B$, we obtain
\begin{align*}
\Exp\|G\|_B
&\leq n^c\,\Exp\big(\|\grad_B(G)\|_2\,\chi_{\Event^c}\big)+n^c\,\Exp\big(\|\grad_B(G)\|_2\,\chi_{\Event}\big)\\
&\leq \frac{1}{2}\Exp\|G\|_B+Cn^{c}\,\Prob(\Event)\Exp\|G\|_B,
\end{align*}
implying $\Prob(\Event)\geq \frac{1}{2C}n^{-c}$.
Thus, with probability at least
$\frac{1}{2C}n^{-c}-n^{-c'}>n^{-c'}$ we have
$$\|G\|_B\geq\frac{1}{2}\Exp\|G\|_B\quad\mbox{and}\quad \|\grad_B(G)\|_2\geq \frac{1}{2}n^{-c}\Exp\|G\|_B,$$
whence for all $p\geq 1$ we get
$$\Exp\big(\|G\|_B^{2p-2}\|\grad_B(G)\|_2^2\big)\geq 2^{-2p}n^{-2c'}\big(\Exp\|G\|_B\big)^{2p}.$$
On the other hand, the concentration of $\|G\|_B$ (again, provided by Proposition~\ref{p: ell lipschitz}
and Theorem~\ref{th: gauss concentration}) implies that
$$\Exp\|G\|_B^{2p}<2^p (\Exp\|G\|_B)^{2p}$$
for all $p\leq c''\log n$ for a sufficiently small universal constant $c''>0$.
Thus, for all such $p$ we have
$$\Exp\|G\|_B^{2p}\leq 2^{3p}n^{2c'}\Exp\big(\|G\|_B^{2p-2}\|\grad_B(G)\|_2^2\big),$$
and the result follows.
\end{proof}

\section{The superconcentration of $\|\cdot\|_B^p$}

Let $B$ be an origin-symmetric convex body in $\R^n$ with a smooth boundary and let $p\geq 1$.
For any point $x\in\R^n\setminus\{0\}$ the $i$-th partial derivative of $\|\cdot\|_B^p$ at $x$ is equal to
$p\|x\|_B^{p-1}\langle \grad_B(x),e_i\rangle$.
Hence, applying Theorem~\ref{th: talagrand l1l2}, we get
$$\Var\big(\|G\|_B^p\big)\leq \sum\limits_{i=1}^n \frac{Cp^2\,\Exp(\|G\|_B^{2p-2}\langle \grad_B(G),e_i\rangle^2)}
{1+\log\big(\sqrt{\Exp(\|G\|_B^{2p-2}\langle \grad_B(G),e_i\rangle^2)}/\Exp|\|G\|_B^{p-1}\langle \grad_B(G),e_i\rangle|\big)},$$
where $C>0$ is a universal constant.
For each $i\leq n$, write
$$\langle \grad_B(x),e_i\rangle=F_i(B,x)+S_i(B,x),\quad\quad x\in\R^n\setminus\{0\},$$
where
\begin{align*}
F_i(B,x)&:=\langle \grad_B(x),e_i\rangle\chi_{\{|\langle \grad_B(x),e_i\rangle|\leq n^{-1/8}\Exp\|G\|_B\}};\\
S_i(B,x)&:=\langle \grad_B(x),e_i\rangle\chi_{\{|\langle \grad_B(x),e_i\rangle|> n^{-1/8}\Exp\|G\|_B\}}.
\end{align*}
Here, ``F'' stands for ``flat'' and ``S'' --- for ``spiky''.
Then the upper bound for the variance can be written as
\begin{align}
\Var\big(\|G\|_B^p\big)
&\leq C'p^2\, \Exp\Big(\|G\|_B^{2p-2}\sum_{i=1}^n F_i^2(B,G)\Big)\nonumber\\
&\hspace{-1cm}+\sum\limits_{i=1}^n \frac{C'p^2\,\Exp(\|G\|_B^{2p-2}S_i^2(B,G))}
{1+\log\big(\sqrt{\Exp(\|G\|_B^{2p-2}\langle \grad_B(G),e_i\rangle^2)}/\Exp|\|G\|_B^{p-1}\langle \grad_B(G),e_i\rangle|\big)}.
\label{eq: var bound}
\end{align}
We will treat the flat and the spiky parts separately.

\begin{lemma}[The flat part]\label{l: flat}
There are universal constants $n_0\in\N$ and $c>0$ with the following property.
Let $n\geq n_0$, let $B$ be a smooth origin-symmetric convex body in $\R^n$ in the $\ell$-position,
and assume that the standard basis in $\R^n$
is $1$-unconditional with respect to $\|\cdot\|_B$. Then for all $p$ in the interval $1\leq p\leq c\log n$
we have
$$p^2\,\Exp\Big(\|G\|_B^{2p-2}\sum\limits_{i=1}^n F_i^2(B,G)\Big)\leq n^{-1/16}\,\Exp\|G\|_B^{2p}.$$
\end{lemma}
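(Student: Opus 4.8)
The plan is to exploit the fact that, by construction, every ``flat'' coordinate of the gradient is bounded pointwise by $n^{-1/8}\Exp\|G\|_B$; this lets us replace one of the two factors of $F_i$ by this crude bound, collapse $\sum_i F_i^2$ into the single quantity $\|\grad_B(\cdot)\|_1$, and then finish with Lemma~\ref{l: l1 grad} and elementary moment comparisons. The $\ell$-position hypothesis is not actually used in the argument below --- only smoothness and $1$-unconditionality of the standard basis; it enters the companion ``spiky'' estimate.

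First I would record the pointwise inequality: for every $x\in\R^n\setminus\{0\}$ and every $i\le n$,
$$|F_i(B,x)|\le\min\big\{|\langle\grad_B(x),e_i\rangle|,\ n^{-1/8}\Exp\|G\|_B\big\},$$
so that
$$\sum_{i=1}^n F_i^2(B,x)\le n^{-1/8}\big(\Exp\|G\|_B\big)\sum_{i=1}^n|\langle\grad_B(x),e_i\rangle|=n^{-1/8}\big(\Exp\|G\|_B\big)\,\|\grad_B(x)\|_1.$$
Multiplying by $\|G\|_B^{2p-2}$ and taking expectations reduces the claim to an upper bound on $\Exp\big(\|G\|_B^{2p-2}\|\grad_B(G)\|_1\big)$.

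To bound that quantity I would apply Hölder's inequality with exponents $\tfrac{2p-1}{2p-2}$ and $2p-1$ (the case $p=1$ being immediate, since then the weight is trivial), and then Lemma~\ref{l: l1 grad} with exponent $2p-1$:
$$\Exp\big(\|G\|_B^{2p-2}\|\grad_B(G)\|_1\big)\le\big(\Exp\|G\|_B^{2p-1}\big)^{\frac{2p-2}{2p-1}}\big(\Exp\|\grad_B(G)\|_1^{2p-1}\big)^{\frac{1}{2p-1}}\le\sqrt{\pi/2}\,\Exp\|G\|_B^{2p-1}.$$
Next, Jensen's inequality gives $\Exp\|G\|_B\le\big(\Exp\|G\|_B^{2p}\big)^{1/(2p)}$ and $\Exp\|G\|_B^{2p-1}\le\big(\Exp\|G\|_B^{2p}\big)^{(2p-1)/(2p)}$, hence $\big(\Exp\|G\|_B\big)\big(\Exp\|G\|_B^{2p-1}\big)\le\Exp\|G\|_B^{2p}$. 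Combining the three displays,
$$p^2\,\Exp\Big(\|G\|_B^{2p-2}\sum_{i=1}^n F_i^2(B,G)\Big)\le\sqrt{\pi/2}\,p^2\,n^{-1/8}\,\Exp\|G\|_B^{2p}.$$
Since $1\le p\le c\log n$, one has $\sqrt{\pi/2}\,p^2\le\sqrt{\pi/2}\,(c\log n)^2\le n^{1/16}$ for all $n$ exceeding a suitable $n_0$, which yields the asserted bound $n^{-1/16}\Exp\|G\|_B^{2p}$.

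I do not expect a genuine obstacle here: the only points requiring care are (i) legitimizing the Hölder exponents, in particular treating $p=1$ separately, and (ii) the elementary numeric step $\sqrt{\pi/2}\,(c\log n)^2\le n^{1/16}$, which is what pins down the admissible range of $p$ and the threshold $n_0$. Everything else is just the pointwise truncation at level $n^{-1/8}\Exp\|G\|_B$ together with Lemma~\ref{l: l1 grad}.
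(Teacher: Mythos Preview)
Your proposal is correct and follows essentially the same line as the paper's proof: the pointwise truncation $\sum_iF_i^2\le n^{-1/8}(\Exp\|G\|_B)\sum_i|\langle\grad_B,e_i\rangle|$, then H\"older with exponents $\tfrac{2p-1}{2p-2}$ and $2p-1$, then Lemma~\ref{l: l1 grad}, and finally the numeric step $\sqrt{\pi/2}\,p^2\le n^{1/16}$. Your write-up is in fact slightly more explicit than the paper's, since you spell out the Jensen step $(\Exp\|G\|_B)(\Exp\|G\|_B^{2p-1})\le\Exp\|G\|_B^{2p}$ and flag the $p=1$ endpoint of the H\"older argument; you are also right that the $\ell$-position is not used here.
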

\begin{proof}
We will assume that $n$ is large.
Note that
$$\sum\limits_{i=1}^n F_i^2(B,G)
\leq n^{-1/8}\Exp\|G\|_B\sum\limits_{i=1}^n |F_i(B,G)|,$$
whence for all $p\geq 1$ we get
\begin{align*}
\Exp\Big(&\|G\|_B^{2p-2}\sum\limits_{i=1}^n F_i^2(B,G)\Big)\\
&\leq n^{-1/8}\Exp\|G\|_B\,\Exp\Big(\|G\|_B^{2p-2}\sum\limits_{i=1}^n |F_i(B,G)|\Big)\\
&\leq n^{-1/8}\Exp\|G\|_B\,\big(\Exp\|G\|_B^{2p-1}\big)^{(2p-2)/(2p-1)}
\bigg(\Exp\Big(\sum\limits_{i=1}^n |F_i(B,G)|\Big)^{2p-1}\bigg)^{1/(2p-1)},
\end{align*}
where the last relation follows from H\"older's inequality.
Next, in view of Lemma~\ref{l: l1 grad},
$$\Exp\Big(\sum\limits_{i=1}^n |F_i(B,G)|\Big)^{2p-1}
\leq \Exp\Big(\sum\limits_{i=1}^n |\langle \grad_B(G),e_i\rangle|\Big)^{2p-1}
\leq (\pi/2)^{p-1/2}\Exp\|G\|_B^{2p-1},$$
whence
$$\Exp\Big(\|G\|_B^{2p-2}\sum\limits_{i=1}^n F_i^2(B,G)\Big)
\leq \sqrt{\pi/2}\,n^{-1/8}\Exp\|G\|_B\,\Exp\|G\|_B^{2p-1}.$$
It remains to note that we can choose the constant $c>0$ small enough and $n_0$ large enough to guarantee that
$$\sqrt{\pi/2}\,p^2n^{-1/8}\leq n^{-1/16}.$$
\end{proof}

\begin{lemma}[The spiky part]\label{l: spiky}
There are universal constants $n_0\in\N$ and $c'>0$ with the following property.
Let $n\geq n_0$, let $B$ be as in the last lemma, and let $i\leq n$, $p\geq 1$ and $\tau>0$ be such that
$$\Exp\big(\|G\|_B^{2p-2}F_i^2(B,G)\big)
\leq n^{-\tau}\Exp\big(\|G\|_B^{2p-2}\langle \grad_B(G),e_i\rangle^2\big).$$
Then
$$
\big(\Exp\big|\|G\|_B^{p-1}\langle \grad_B(G),e_i\rangle\big|\big)^2\leq
2(n^{-c'}+n^{-\tau})\Exp(\|G\|_B^{2p-2}\langle \grad_B(G),e_i\rangle^2).
$$
\end{lemma}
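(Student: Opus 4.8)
The plan is to bound the left-hand side by splitting $\langle\grad_B(G),e_i\rangle=F_i(B,G)+S_i(B,G)$ into its flat and spiky parts and controlling each against the quantity $R_i:=\big(\Exp(\|G\|_B^{2p-2}\langle\grad_B(G),e_i\rangle^2)\big)^{1/2}$ appearing on the right. For the flat part, Cauchy--Schwarz against the constant function $1$ and the hypothesis of the lemma give at once $\Exp\big(\|G\|_B^{p-1}|F_i(B,G)|\big)\le\big(\Exp(\|G\|_B^{2p-2}F_i^2(B,G))\big)^{1/2}\le n^{-\tau/2}R_i$. For the spiky part, since $|S_i(B,G)|\le|\langle\grad_B(G),e_i\rangle|$ and $S_i(B,G)$ is supported on the event $\Event_i:=\{|\langle\grad_B(G),e_i\rangle|>n^{-1/8}\Exp\|G\|_B\}$, Cauchy--Schwarz gives $\Exp\big(\|G\|_B^{p-1}|S_i(B,G)|\big)\le R_i\,\Prob(\Event_i)^{1/2}$. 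Adding these via the triangle inequality and squaring (using $(a+b)^2\le 2(a^2+b^2)$), the lemma reduces to the purely geometric estimate $\Prob(\Event_i)\le n^{-c'}$ for a universal $c'>0$ independent of $p$. Throughout I will use that in the $\ell$-position $\Exp\|G\|_B^2=1$, hence $\Exp\|G\|_B\asymp 1$, and that $\Lip(\|\cdot\|_B)\le C/\sqrt{\log n}$ by Proposition~\ref{p: ell lipschitz} (so also $\|e_i\|_B\le\Lip(\|\cdot\|_B)\le C/\sqrt{\log n}$).

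To estimate $\Prob(\Event_i)$ I would condition on $(G_j)_{j\ne i}$. By Lemma~\ref{l: grad is monotone} together with the coordinate reflection symmetry of $1$-unconditional norms, the map $t\mapsto|\langle\grad_B(G_1,\dots,t,\dots,G_n),e_i\rangle|$ is even and non-decreasing in $|t|$ and continuous, so there is a threshold $t_i=t_i\big((G_j)_{j\ne i}\big)\in[0,\infty]$ with $\Event_i=\{|G_i|>t_i\}$ and hence $\Prob(\Event_i\mid (G_j)_{j\ne i})=2\bar\Phi(t_i)$, writing $\bar\Phi(t):=\Prob\{g>t\}$. On $\Event_i$ one has, by Lemma~\ref{l: unconditional grad}, $\langle\grad_B(G),e_i\rangle\langle G,e_i\rangle=|\langle\grad_B(G),e_i\rangle|\,|G_i|\ge n^{-1/8}\Exp\|G\|_B\cdot|G_i|$, and moreover $\|G\|_B\ge\|G^{(i)}\|_B$ where $G^{(i)}$ is $G$ with its $i$-th coordinate replaced by $0$ (monotonicity of $1$-unconditional norms). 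Using $\int_{|s|>t}|s|\,d\gamma_1(s)=2(2\pi)^{-1/2}e^{-t^2/2}=:2\varphi(t)$, integrating out $G_i$ and then taking expectation over $(G_j)_{j\ne i}$, the balancing identity $\Exp\big(\|G\|_B\langle\grad_B(G),e_i\rangle\langle G,e_i\rangle\big)=1/n$ of Lemma~\ref{l: ell equations} yields $\tfrac1n\ge 2n^{-1/8}\,\Exp\|G\|_B\cdot\Exp\big(\|G^{(i)}\|_B\varphi(t_i)\big)$, and therefore $\Exp\big(\|G^{(i)}\|_B\varphi(t_i)\big)\le C\,n^{-7/8}$.

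It then remains to convert this into a bound on $\Prob(\Event_i)=\Exp\big(2\bar\Phi(t_i)\big)$. I would invoke the elementary inequality $\bar\Phi(t)\le C''\varphi(t)$ valid for every $t\ge 0$ (from $\bar\Phi(t)\le t^{-1}\varphi(t)$ for $t\ge 1$ and $\bar\Phi(t)\le\tfrac12\le\varphi(1)^{-1}\varphi(t)/2$ for $0\le t\le 1$), and split the expectation according to whether $\|G^{(i)}\|_B\ge\tfrac14\Exp\|G^{(i)}\|_B$. On the complementary event, $2\bar\Phi(t_i)\le 1$ contributes at most $\Prob\big(\|G^{(i)}\|_B<\tfrac14\Exp\|G^{(i)}\|_B\big)\le n^{-c_1}$ by Theorem~\ref{th: gauss concentration}, since $\|G^{(i)}\|_B$ is $\Lip(\|\cdot\|_B)$-Lipschitz with $\Lip(\|\cdot\|_B)\le C/\sqrt{\log n}$ while $\Exp\|G^{(i)}\|_B\ge\Exp\|G\|_B-\sqrt{2/\pi}\,\|e_i\|_B\ge c_0/2$ for $n$ large. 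On the event $\|G^{(i)}\|_B\ge\tfrac14\Exp\|G^{(i)}\|_B$ one has $\varphi(t_i)\le 4(\Exp\|G^{(i)}\|_B)^{-1}\|G^{(i)}\|_B\varphi(t_i)$, so that contribution is at most $8C''(\Exp\|G^{(i)}\|_B)^{-1}\Exp\big(\|G^{(i)}\|_B\varphi(t_i)\big)\le C'n^{-7/8}$. Altogether $\Prob(\Event_i)\le n^{-c_1}+C'n^{-7/8}\le n^{-c'}$ for a universal $c'>0$ and all $n\ge n_0$, which, as explained above, completes the proof.

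The only genuinely non-routine point I expect is the reduction of $\Event_i$ to the one-dimensional threshold event $\{|G_i|>t_i\}$ through Lemma~\ref{l: grad is monotone}: it is exactly this reduction that turns the $\ell$-position balancing identity of Lemma~\ref{l: ell equations} into tail control of $t_i$, via the Gaussian moment identity $\int_{t_i}^{\infty}s\,d\gamma_1(s)=\varphi(t_i)$. The remaining ingredients --- the two Cauchy--Schwarz steps, Gaussian concentration for $\|G^{(i)}\|_B$, and the standard comparison between the Gaussian tail and density --- are routine.
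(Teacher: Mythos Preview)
Your proof is correct and follows the same overall architecture as the paper's: reduce the statement via Cauchy--Schwarz and the splitting $\langle\grad_B(G),e_i\rangle=F_i+S_i$ to the single estimate $\Prob\{S_i(B,G)\neq 0\}\le n^{-c'}$, and then derive that estimate from the balancing identity of Lemma~\ref{l: ell equations} combined with the monotonicity in Lemma~\ref{l: grad is monotone} and Gaussian concentration (Proposition~\ref{p: ell lipschitz} plus Theorem~\ref{th: gauss concentration}).

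The technical route you take to the probability bound is a genuine variant of the paper's. The paper keeps $\|G\|_B$ intact and, after observing that both $\|G\|_B$ and $\chi_{\{S_i\neq 0\}}$ are monotone in $|G_i|$ once the other coordinates are frozen, invokes the one-dimensional correlation inequality for comonotone functions to strip off the factor $|G_i|$; it then bounds $\Exp(\|G\|_B\chi_{\{S_i\neq 0\}})$ below by $\tfrac12\Exp\|G\|_B\cdot(\Prob\{S_i\neq 0\}-n^{-c'})$ via concentration of $\|G\|_B$. You instead replace $\|G\|_B$ by the $(G_j)_{j\neq i}$-measurable lower bound $\|G^{(i)}\|_B$, which lets you compute the $G_i$-integral exactly as $2\varphi(t_i)$ through the threshold $t_i$; you then pass from $\varphi(t_i)$ to $\bar\Phi(t_i)$ via the Mills-ratio bound $\bar\Phi\le C''\varphi$, and handle the event $\{\|G^{(i)}\|_B\mbox{ small}\}$ by concentration. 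Both arguments exploit the same structural facts and yield the same polynomial saving; yours trades the correlation inequality for an explicit one-dimensional Gaussian computation, which is arguably more transparent, while the paper's version avoids the auxiliary quantity $\|G^{(i)}\|_B$ and the Mills-ratio step.
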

\begin{proof}
We will assume that $n$ is large.
In view of Lemma~\ref{l: ell equations}
as well as Lemma~\ref{l: unconditional grad} and the definition of $S_i(B,G)$, we have
$$\Exp\|G\|_B\,\Exp\big(\|G\|_B|\langle G,e_i\rangle|\chi_{\{S_i(B,G)\neq 0\}}n^{-1/8}\big)
\leq\Exp\big(\|G\|_B\langle \grad_B(G),e_i\rangle\langle G,e_i\rangle\big)
= \frac{1}{n}\Exp\|G\|_B^2.$$
On the other hand, 
conditioned on any realization of $\langle G,e_j\rangle$ ($j\neq i$),
$\|G\|_B$ is a monotone function of $|\langle G,e_i\rangle|$,
and, in view of Lemma~\ref{l: grad is monotone} and the definition of $S_i$, $\chi_{\{S_i(B,G)\neq 0\}}$
is a monotone function of $|\langle G,e_i\rangle|$. Hence, setting $g_j:=\langle G,e_j\rangle$ ($j\leq n$),
we obtain
\begin{align*}
\Exp\big(\|G\|_B|\langle G,e_i\rangle|\chi_{\{S_i(B,G)\neq 0\}}\big)
&=\Exp_{\{g_j,\;j\neq i\}}\Exp_{g_i}\big(\|G\|_B|\langle G,e_i\rangle|\chi_{\{S_i(B,G)\neq 0\}}\big)\\
&\geq \Exp\big(\|G\|_B\,\chi_{\{S_i(B,G)\neq 0\}}\big)\,\Exp|\langle G,e_i\rangle|\\
&=\sqrt{\frac{2}{\pi}}\,\Exp\big(\|G\|_B\,\chi_{\{S_i(B,G)\neq 0\}}\big).
\end{align*}
Further,
\begin{align*}
\Exp\big(\|G\|_B\,\chi_{\{S_i(B,G)\neq 0\}}\big)
&\geq\Exp\big(\|G\|_B\,\chi_{\{S_i(B,G)\neq 0\mbox{ and }2\|G\|_B\geq \Exp\|G\|_B\}}\big)\\
&\geq \frac{1}{2}\Exp\|G\|_B\,\Prob\big\{S_i(B,G)\neq 0\mbox{ and }2\|G\|_B\geq \Exp\|G\|_B\big\}\\
&\geq \frac{1}{2}\Exp\|G\|_B\,\big(\Prob\big\{S_i(B,G)\neq 0\big\}-\Prob\big\{2\|G\|_B< \Exp\|G\|_B\big\}\big)\\
&\geq \frac{1}{2}\Exp\|G\|_B\,\big(\Prob\big\{S_i(B,G)\neq 0\big\}-n^{-c'}\big),
\end{align*}
where the last inequality follows from Proposition~\ref{p: ell lipschitz} and Theorem~\ref{th: gauss concentration}.
Combining all the above inequalities, we obtain
$$\frac{1}{n}=\frac{1}{n}\Exp\|G\|_B^2\geq \frac{1}{\sqrt{2\pi}}\,n^{-1/8}\big(\Exp\|G\|_B\big)^2
\,\big(\Prob\big\{S_i(B,G)\neq 0\big\}-n^{-c'}\big).$$
Taking into consideration the equivalence of moments of $\|G\|_B$, we get that
$$\Prob\big\{S_i(B,G)\neq 0\big\}\leq n^{-c''}$$
for a universal constant $c''>0$.
Thus,
$$\big(\Exp|\|G\|_B^{p-1} S_i(B,G)|\big)^2\leq n^{-c''}\Exp\big(\|G\|_B^{2p-2} S_i^2(B,G)\big).$$
Finally, we have, by the assumption of the lemma and the above relation,
\begin{align*}
\Exp\big(\|G\|_B^{2p-2}\langle \grad_B(G),e_i\rangle^2\big)
&\geq n^{c''}\big(\Exp|\|G\|_B^{p-1} S_i(B,G)|\big)^2;\\
\Exp\big(\|G\|_B^{2p-2}\langle \grad_B(G),e_i\rangle^2\big)
&\geq n^{\tau}\big(\Exp|\|G\|_B^{p-1} F_i(B,G)|\big)^2,
\end{align*}
whence
$$\big(\Exp|\|G\|_B^{p-1}\langle \grad_B(G),e_i\rangle|\big)^2
\leq 2(n^{-c''}+n^{-\tau})\Exp\big(\|G\|_B^{2p-2}\langle \grad_B(G),e_i\rangle^2\big).$$
\end{proof}

\begin{proof}[Proof of Theorem~\ref{th: superconcentration}]
We suppose that $n$ is large.
Moreover, in view of Lemma~\ref{l: uncond} and rotation-invariance of the Gaussian distribution, we can assume without loss of generality
that the standard basis in $\R^n$ is $1$-unconditional with respect to $\|\cdot\|_B$.
Let $c>0$ be minimum of the constants from Lemmas~\ref{l: norm to gradient} and~\ref{l: flat},
and let $p\in[1,c\log n]$.
We assume that
\begin{equation}\label{eq: aux 6t}
\Exp\|G\|_B\leq n^{c}\,\Exp\|\grad_B(G)\|_2.
\end{equation}
Let us start by applying Lemma~\ref{l: spiky} with $\tau:=1/64$.
Note that for those $i\leq n$ with
$$\Exp\big(\|G\|_B^{2p-2}F_i^2(B,G)\big)
\leq n^{-1/64}\Exp\big(\|G\|_B^{2p-2}\langle \grad_B(G),e_i\rangle^2\big)$$
we have
$$\log\big(\sqrt{\Exp(\|G\|_B^{2p-2}\langle \grad_B(G),e_i\rangle^2)}/\Exp|\|G\|_B^{p-1}\langle \grad_B(G),e_i\rangle|\big)
\geq \widetilde c\log n$$
for a universal constant $\widetilde c>0$.
Hence,
\begin{align*}
&\sum\limits_{i=1}^n \frac{C'p^2\,\Exp(\|G\|_B^{2p-2}S_i^2(B,G))}
{1+\log\big(\sqrt{\Exp(\|G\|_B^{2p-2}\langle \grad_B(G),e_i\rangle^2)}/\Exp|\|G\|_B^{p-1}\langle \grad_B(G),e_i\rangle|\big)}\\
&\hspace{1cm}\leq
C'p^2n^{1/64}\sum\limits_{i=1}^n \Exp(\|G\|_B^{2p-2}F_i^2(B,G))
+\frac{C''p^2}{\log n}\sum_{i=1}^n\Exp(\|G\|_B^{2p-2}S_i^2(B,G)).
\end{align*}
Together with relation~\eqref{eq: var bound} this gives
$$
\Var\big(\|G\|_B^p\big)
\leq \widetilde C p^2n^{1/64}\sum\limits_{i=1}^n \Exp(\|G\|_B^{2p-2}F_i^2(B,G))
+\frac{\widetilde C p^2}{\log n}\sum_{i=1}^n \Exp(\|G\|_B^{2p-2}S_i^2(B,G)).$$
Next, applying Lemma~\ref{l: flat} we obtain
$$
\Var\big(\|G\|_B^p\big)
\leq \widetilde C n^{-3/64}\,\Exp\|G\|_B^{2p}
+\frac{\widetilde C p^2}{\log n}\sum_{i=1}^n\Exp(\|G\|_B^{2p-2}S_i^2(B,G)).$$
Finally, in view of Lemma~\ref{l: norm to gradient} and~\eqref{eq: aux 6t}, this gives
$$
\Var\big(\|G\|_B^p\big)
\leq \Big(\widetilde C p^2 n^{-1/64}+
\frac{\widetilde C p^2}{\log n}\Big)\Exp(\|G\|_B^{2p-2}\|\grad_B(G)\|_B^2).$$
It remains to note that $p\|G\|_B^{p-1}\| \grad_B(G)\|_2$ equals the Euclidean norm of the gradient
of $\|\cdot\|^p_B$ at $G$, and apply the definition of superconcentration.
\end{proof}

\section{The randomized Dvoretzky theorem}

We will show how the variance bound from the previous section is translated into
a small deviations inequality for $\|G\|_B$. At a high level, the procedure is rather standard;
for example, let us refer to \cite[Chapter~3]{Ledoux 2001} for a very general scheme
that allows to deduce exponential concentration from {\it the Poincar\'e inequality}.
On the other hand, as we have better bounds on the variance of $\|G\|_B^p$
than those provided by the Poincar\'e inequality, our deviation estimates are stronger.
Let us remark that the use of superconcentration in our analysis was inspired by a recent paper
of G.~Paouris, P.~Valettas and J.~Zinn \cite{PVZ} dealing with almost Euclidean sections of $\ell_p^n$-balls,
which complemented earlier results of A.~Naor \cite{Naor}.

\medskip

Let us start with a simple lemma that follows immediately from Proposition~\ref{p: ell lipschitz}
and Theorem~\ref{th: gauss concentration}:
\begin{lemma}\label{l: aux 32}
There are universal constants $n_0\in\N$ and $C\geq e$ with the following property.
Let $n\geq n_0$, let $B$ be an origin-symmetric convex body in $\R^n$ in the $\ell$-position, with a smooth boundary, and let $q\leq\log n$.
Then $\Exp\|G\|_B^{2q}\leq C^q\,\big(\Exp\|G\|_B^{q}\big)^2$.
\end{lemma}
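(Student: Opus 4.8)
The plan is to reduce everything to a subgaussian moment estimate for $\|G\|_B$ coming from Gaussian concentration, and then to absorb the fluctuation term using the lower bound $\Exp\|G\|_B\geq c\sqrt{\log n}\,\Lip(\|\cdot\|_B)$ of Proposition~\ref{p: ell lipschitz}. We may assume $n$ is large (so that Proposition~\ref{p: ell lipschitz} applies) and $q\geq 1$. Set $X:=\|G\|_B$, $M:=\Exp X$ and $L:=\Lip(\|\cdot\|_B)$; note $X\leq L\|G\|_2$ since $\|\cdot\|_B$ is $L$-Lipschitz and vanishes at the origin, so $M<\infty$ and all moments of $X$ are finite. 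Since $t\mapsto t^q$ is convex on $[0,\infty)$, Jensen's inequality gives $\Exp X^q\geq (\Exp X)^q$, hence $(\Exp X^q)^2\geq M^{2q}$; consequently it suffices to prove $\Exp X^{2q}\leq C^q M^{2q}$, i.e.\ $\big(\Exp X^{2q}\big)^{1/(2q)}\leq \sqrt{C}\,M$.

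First I would invoke Theorem~\ref{th: gauss concentration}: applied to the $L$-Lipschitz function $\|\cdot\|_B$ it yields $\Prob\{\,|X-M|\geq t\,\}\leq 2\exp(-t^2/(2L^2))$ for $t>0$. Writing $Y:=|X-M|$, so that $X\leq M+Y$ and $Y$ has this subgaussian tail, a routine integration of $\Exp Y^{2q}=\int_0^\infty 2q\,t^{2q-1}\Prob\{Y\geq t\}\,dt$ against the Gaussian tail (together with the crude bound $\Gamma(q)\leq C_1^{\,q}q^{q}$) gives $\big(\Exp Y^{2q}\big)^{1/(2q)}\leq C_0 L\sqrt{q}$ for a universal constant $C_0$. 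By the triangle inequality in $L^{2q}$,
$$\big(\Exp X^{2q}\big)^{1/(2q)}\leq M+\big(\Exp Y^{2q}\big)^{1/(2q)}\leq M+C_0 L\sqrt{q}.$$

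Now I would bring in Proposition~\ref{p: ell lipschitz}, which says $M\geq c\sqrt{\log n}\,L$, i.e.\ $L\leq M/(c\sqrt{\log n})$. Since $q\leq\log n$, this gives $C_0 L\sqrt{q}\leq (C_0/c)\,M$, whence $\big(\Exp X^{2q}\big)^{1/(2q)}\leq (1+C_0/c)\,M$ and therefore $\Exp X^{2q}\leq (1+C_0/c)^{2q}M^{2q}\leq (1+C_0/c)^{2q}\big(\Exp X^q\big)^2$. Taking $C:=\max\big(e,\,(1+C_0/c)^2\big)$ and $n_0$ at least as large as the threshold in Proposition~\ref{p: ell lipschitz} completes the proof.

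There is no real obstacle here: the statement is essentially a repackaging of the moment comparison already used inside the proof of Lemma~\ref{l: norm to gradient}, and the only points requiring mild care are the bookkeeping of universal constants and the use of Proposition~\ref{p: ell lipschitz} (hence the restriction $n\geq n_0$), which is precisely where the $\ell$-position enters.
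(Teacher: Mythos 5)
Your argument is correct and is exactly the route the paper intends: the paper states this lemma without proof, remarking only that it ``follows immediately from Proposition~\ref{p: ell lipschitz} and Theorem~\ref{th: gauss concentration}'', which is precisely the combination you carry out (concentration gives $(\Exp|X-M|^{2q})^{1/(2q)}\leq C_0\Lip(\|\cdot\|_B)\sqrt{q}$, and the $\ell$-position enters only through $M\geq c\sqrt{\log n}\,\Lip(\|\cdot\|_B)$). The one small point to tidy is the reduction ``we may assume $q\geq 1$'': for $q\in(0,1)$ the Jensen step $\Exp X^{q}\geq(\Exp X)^{q}$ reverses, and this range is formally allowed by the statement (and arises in its application with $q=2p-2$ for $p$ close to $1$), so it needs a one-line supplement, e.g.\ deducing it from the case $q=1$ together with the lower bound $\Exp X^{q}\geq (M/2)^{q}\,\Prob\{X\geq M/2\}$ furnished by the same concentration estimate.
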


\begin{lemma}\label{l: aux 09}
There are constants $n_0\in\N$, $c'>0$ and $C'>0$ with the following property.
Let $B$ be an origin-symmetric convex body in $\R^n$ in the $\ell$-position, with a smooth boundary,
and such that $(\R^n,\|\cdot\|_B)$ admits a $1$-unconditional basis.
Then for all $p\in[1,c'\log n]$ we have
$$\Var(\|G\|_B^p)\leq\frac{C'\,p^2}{(\log n)^2}\,\Exp\|G\|_B^{2p}.$$
\end{lemma}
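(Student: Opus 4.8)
The plan is to split into the two cases dictated by the dichotomy described in the introduction. First suppose the ``degenerate'' case holds, namely $\Exp\|G\|_B > n^{c}\,\Exp\|\grad_B(G)\|_2$, where $c$ is the constant from Theorem~\ref{th: superconcentration} (equivalently from Lemma~\ref{l: norm to gradient}). In this regime I would not use superconcentration at all: applying the Poincar\'e inequality to $\|\cdot\|_B^p$ gives $\Var(\|G\|_B^p)\leq p^2\Exp(\|G\|_B^{2p-2}\|\grad_B(G)\|_2^2)$, and since $\|\grad_B(\cdot)\|_2$ is everywhere bounded by $C\Exp\|G\|_B$ (Lemma~\ref{l: grad trivial upper bound}), we get $\Var(\|G\|_B^p)\leq C p^2 n^{-2c}(\Exp\|G\|_B)^2\,\Exp\|G\|_B^{2p-2}$. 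Then I would use Lemma~\ref{l: aux 32} (with $q$ on the order of $p\leq c'\log n$) to pass between $(\Exp\|G\|_B)^{2}\Exp\|G\|_B^{2p-2}$ and $\Exp\|G\|_B^{2p}$ at the cost of a factor $C^{p}$, which for $p\leq c'\log n$ with $c'$ small enough is at most $n^{c}$; this absorbs the $n^{-2c}$ and leaves a bound $\leq n^{-c}\Exp\|G\|_B^{2p}$, which is far stronger than the claimed $\frac{C'p^2}{(\log n)^2}\Exp\|G\|_B^{2p}$.

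In the complementary ``non-degenerate'' case $\Exp\|G\|_B\leq n^{c}\,\Exp\|\grad_B(G)\|_2$, the hypotheses of Theorem~\ref{th: superconcentration} are met (smoothness, $\ell$-position, $1$-unconditional basis all come from the hypotheses of the present lemma, possibly after the reduction via Lemma~\ref{l: uncond} to the canonical-basis case), so for every $p\in[1,c\log n]$ the function $\|\cdot\|_B^p$ is $\frac{C}{\log n}$-superconcentrated, i.e. $\Var(\|G\|_B^p)\leq\frac{C}{\log n}\,\Exp\|\grad(\|\cdot\|_B^p)\|_2^2=\frac{C}{\log n}\,p^2\,\Exp(\|G\|_B^{2p-2}\|\grad_B(G)\|_2^2)$. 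To finish I need to bound $\Exp(\|G\|_B^{2p-2}\|\grad_B(G)\|_2^2)$ by $\frac{C'}{\log n}\Exp\|G\|_B^{2p}$ (which then yields the extra $\frac{1}{\log n}$ to reach the $(\log n)^{-2}$ in the statement). The key point is that $\|\grad_B(G)\|_2^2$ is typically of order $\frac{1}{n}(\Exp\|G\|_B)^2\cdot$(a bounded factor): indeed Lemma~\ref{l: ell equations} with $u=e_i$ gives $\Exp(\|G\|_B\langle\grad_B(G),e_i\rangle\langle G,e_i\rangle)=\frac1n$, and summing / using Cauchy--Schwarz and the $1$-unconditional monotonicity (Lemma~\ref{l: grad is monotone}, as in the proof of Lemma~\ref{l: spiky}) lets one compare $\Exp(\|G\|_B^{2p-2}\|\grad_B(G)\|_2^2)$ to $\frac{1}{\log n}\,\Exp\|G\|_B^{2p}$, with Lemma~\ref{l: aux 32} again used to shuffle moments of $\|G\|_B$.

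The main obstacle is the last comparison in the non-degenerate case: getting the extra factor $\frac{1}{\log n}$ (rather than merely $O(1)$) in the bound $\Exp(\|G\|_B^{2p-2}\|\grad_B(G)\|_2^2)\leq\frac{C'}{\log n}\Exp\|G\|_B^{2p}$. A clean route is to re-run the flat/spiky split from Section~4: the flat contribution is handled verbatim by Lemma~\ref{l: flat}, giving a gain of $n^{-1/16}$ (far better than $1/\log n$); the spiky contribution is controlled because, by the argument inside Lemma~\ref{l: spiky}, $\Prob\{S_i(B,G)\neq 0\}\leq n^{-c''}$, so $\sum_i\Exp(\|G\|_B^{2p-2}S_i^2(B,G))\leq n^{-1/8}(\Exp\|G\|_B)^2\sum_i\Exp(\|G\|_B^{2p-2}|\langle\grad_B(G),e_i\rangle|)$, and bounding $\sum_i|\langle\grad_B(G),e_i\rangle|=\|\grad_B(G)\|_1$ by Lemma~\ref{l: l1 grad} (together with Lemma~\ref{l: aux 32}) again produces a polynomial saving in $n$. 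Either way the $\frac{1}{\log n}$ is comfortably beaten; the only care needed is to keep the powers of $p$ and the $C^p$ factors from Lemma~\ref{l: aux 32} under control, which is fine for $p\leq c'\log n$ with $c'$ chosen small. Thus in both cases $\Var(\|G\|_B^p)\leq\frac{C'p^2}{(\log n)^2}\Exp\|G\|_B^{2p}$, as claimed.
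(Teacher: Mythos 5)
Your overall architecture --- the dichotomy on whether $\Exp\|G\|_B\leq n^{c}\,\Exp\|\grad_B(G)\|_2$, with Poincar\'e in the degenerate case and Theorem~\ref{th: superconcentration} in the other --- is exactly the paper's, but both branches as you have written them contain gaps. In the degenerate case you claim $\Var(\|G\|_B^p)\leq Cp^2 n^{-2c}(\Exp\|G\|_B)^2\,\Exp\|G\|_B^{2p-2}$ citing only the pointwise bound $\|\grad_B(\cdot)\|_2\leq C\Exp\|G\|_B$; that pointwise bound carries no factor $n^{-c}$, and the hypothesis $\Exp\|\grad_B(G)\|_2< n^{-c}\Exp\|G\|_B$ is only an average bound, so it does not control the weighted moment $\Exp\big(\|G\|_B^{2p-2}\|\grad_B(G)\|_2^2\big)$ directly (the gradient could be large precisely where $\|G\|_B^{2p-2}$ is large). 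The paper repairs this by introducing the event $\Event=\{\|\grad_B(G)\|_2\geq n^{-c/2}\Exp\|G\|_B\}$, bounding $\Prob(\Event)\leq n^{-c/2}$ by Markov, and then using Cauchy--Schwarz together with the reverse H\"older inequality of Lemma~\ref{l: aux 32} to absorb $\chi_{\Event}$; some decoupling step of this kind is genuinely needed, not just a moment shuffle.

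In the non-degenerate case the missing factor $\frac{1}{\log n}$ is obtained in the paper by the single observation that $\|\grad_B(G)\|_2\leq \Lip(\|\cdot\|_B)\leq C\Exp\|G\|_B/\sqrt{\log n}$ pointwise, by Proposition~\ref{p: ell lipschitz}; combined with Lyapunov's inequality this immediately gives $\Exp\big(\|G\|_B^{2p-2}\|\grad_B(G)\|_2^2\big)\leq \frac{C}{\log n}\Exp\|G\|_B^{2p}$. Your substitute --- re-running the flat/spiky split --- is both unnecessary and incorrect as stated: the inequality $\sum_i S_i^2(B,G)\leq n^{-1/8}\Exp\|G\|_B\sum_i|S_i(B,G)|$ applies the truncation bound that defines the \emph{flat} part $F_i$ to the \emph{spiky} part $S_i$, for which by definition $|\langle\grad_B(G),e_i\rangle|>n^{-1/8}\Exp\|G\|_B$. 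Moreover the polynomial-in-$n$ saving you claim ("comfortably beaten") cannot hold in general: for the cube in the $\ell$-position the gradient is entirely spiky and $\Exp\|\grad_B(G)\|_2^2$ is of order $\frac{1}{\log n}\Exp\|G\|_B^{2}$, not $n^{-c}\Exp\|G\|_B^2$, which is consistent with the fact that $\Var\|G\|_\infty\asymp (\log n)^{-2}\,\Exp\|G\|_\infty^2$ is sharp. So the $1/\log n$ must come from the Lipschitz bound, not from another polynomial gain.
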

\begin{proof}
We can suppose that $n$ is large. Let $c>0$ be the constant from Theorem~\ref{th: superconcentration}
(we can safely assume that $c\leq 1/2$), and set $c':=c/(8\log C)$,
where $C$ is taken from Lemma~\ref{l: aux 32}.
Further, let $p\in [1,c'\log n]$ and consider two cases:

\begin{itemize}

\item Suppose that $\Exp\|G\|_B> n^c\,\Exp\|\grad_B(G)\|_2$.
Denote by $\Event$ the event $\|\grad_B(G)\|_2\geq n^{-c/2}\Exp\|G\|_B$.
Clearly, $\Prob(\Event)\leq n^{-c/2}$ by Markov's inequality.
Now, the Poincar\'e inequality for $\|\cdot\|_B^p$ implies
\begin{align*}
\Var(\|G\|_B^p)&\leq p^2\,\Exp\big(\|G\|_B^{2p-2}\|\grad_B(G)\|_2^2\big)\\
&\leq p^2\,\Exp\big(\|G\|_B^{2p-2}\|\grad_B(G)\|_2^2\,\chi_{\Event}\big)
+p^2n^{-c}\,\big(\Exp\|G\|_B\big)^2\,\Exp\|G\|_B^{2p-2}\\
&\leq \widetilde C p^2\,\big(\Exp\|G\|_B\big)^2\,\Exp\big(\|G\|_B^{2p-2}\,\chi_{\Event}\big)
+p^2n^{-c}\,\Exp\|G\|_B^{2p},
\end{align*}
where at the last step we applied Lemma~\ref{l: grad trivial upper bound}.
Further, by the Cauchy--Schwarz inequality and Lemma~\ref{l: aux 32} we get
$$\Exp\big(\|G\|_B^{2p-2}\,\chi_{\Event}\big)\leq \sqrt{\Prob(\Event)}\,\big(\Exp\|G\|_B^{4p-4}\big)^{1/2}
\leq n^{-c/4}\,C^{p-1}\Exp\|G\|_B^{2p-2}.$$
Finally,
$$\Var(\|G\|_B^p)\leq 2\widetilde C p^2n^{-c/4}\,C^{p-1}\Exp\|G\|_B^{2p}
\leq 2\widetilde C p^2n^{-c/8}\,\Exp\|G\|_B^{2p}
\leq \bar C n^{-c/16}\Exp\|G\|_B^{2p},$$
where the second inequality follows from the choice of $c'$.

\item If $\Exp\|G\|_B\leq n^c\,\Exp\|\grad_B(G)\|_2$ then, by Theorem~\ref{th: superconcentration}, we have
$$\Var(\|G\|_B^p)\leq\frac{C'p^2}{\log n}\,\Exp\big(\|G\|_B^{2p-2}\|\grad_B(G)\|_2^2\big)$$
for a universal constant $C'>0$.
Note that, in view of Proposition~\ref{p: ell lipschitz},
$$\|\grad_B(G)\|_2\leq \Lip(\|\cdot\|_B)\leq \frac{C''\Exp\|G\|_B}{\sqrt{\log n}}$$
for a universal constant $C''>0$, whence
$$\Var(\|G\|_B^p)\leq\frac{\bar C\,p^2}{(\log n)^2}\,\Exp\|G\|_B^{2p}.$$
\end{itemize}
Thus in any case we obtain the required bound.
\end{proof}

Now, we have
\begin{theor}\label{th: small deviations}
There are universal constants $n_0\in\N$ and $c>0$ with the following property.
Let $n\geq n_0$, let $B$ be an origin-symmetric convex body in $\R^n$ in the $\ell$-position,
and assume that $(\R^n,\|\cdot\|_B)$ 
admits a $1$-unconditional basis.
Then for any $\varepsilon\in(0,1/2]$ we have
$$\Prob\big\{\big|\|G\|_B-\Med\|G\|_B\big|\geq \varepsilon \Med\|G\|_B\big\}\leq 2n^{-c\varepsilon}.$$
\end{theor}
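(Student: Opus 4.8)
The plan is to derive the two-sided tail bound from the superconcentration estimate of Lemma~\ref{l: aux 09} by a comparison of moments. First I would make three harmless reductions. Assume $n$ is large. By Lemma~\ref{l: uncond} and rotation-invariance of the Gaussian measure we may assume the standard basis of $\R^n$ is $1$-unconditional for $\|\cdot\|_B$. By Corollary~\ref{c:smooth} we may replace $B$ by a smooth body $B'$ with $(1-\delta)B\subset B'\subset(1+\delta)B$ for a $\delta$ far smaller than $\varepsilon/\log n$; the conclusion for $B'$ transfers to $B$ since $\|\cdot\|_{B'}$ and $\Med\|\cdot\|_{B'}$ differ from $\|\cdot\|_B$ and $\Med\|\cdot\|_B$ by factors $1+O(\delta)$. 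Finally we may assume $\varepsilon\geq C_0/\log n$ for a suitable $C_0$, since otherwise $2n^{-c\varepsilon}\geq 1$. Put $\mu:=\Exp\|G\|_B$ and $M:=\Med\|G\|_B$. Because $B$ is in the $\ell$-position, $\Exp\|G\|_B^2=1$, so $\Var\|G\|_B=1-\mu^2$, and Lemma~\ref{l: aux 09} with $p=1$ gives $1-\mu^2\leq C'/(\log n)^2$; hence $\mu\in[1-C'/(\log n)^2,1]$, and since the median minimizes $a\mapsto\Exp|\|G\|_B-a|$ we get $|M-\mu|\leq\sqrt{\Var\|G\|_B}\leq\sqrt{C'}/\log n$. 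Thus $\mu$ and $M$ are interchangeable up to a factor $1+O(1/\log n)$ and both lie in $[1/2,1]$.

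The second step is a moment-closeness bound: for every $p\in[1,c_2\log n]$ one has $\mu\leq(\Exp\|G\|_B^p)^{1/p}\leq(1+C_3/\log n)\mu$. The lower bound is Jensen's inequality. For the upper bound, rewrite Lemma~\ref{l: aux 09} as $\Exp\|G\|_B^{2p}\leq(\Exp\|G\|_B^p)^2/(1-C'p^2/(\log n)^2)$, valid once $p\leq c_2\log n$ with $c_2$ small enough that $C'p^2/(\log n)^2\leq 1/2$; dividing logarithms by $2p$, this says that $p\mapsto\frac1p\log\Exp\|G\|_B^p$ grows by only $O(p/(\log n)^2)$ when $p$ is doubled. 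Iterating from $p=1$ along a dyadic chain that stays inside $[1,c_2\log n]$, and using log-convexity of $p\mapsto\log\Exp\|G\|_B^p$ to interpolate to non-dyadic $p$, the total increase is $O(1/\log n)$, which is the claim.

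The upper tail then follows from Markov's inequality applied to $\|\cdot\|_B^p$ with $p=c_2\log n$: for $\varepsilon\geq C_0/\log n$,
$$\Prob\big\{\|G\|_B\geq(1+\varepsilon)M\big\}=\Prob\big\{\|G\|_B^p\geq(1+\varepsilon)^pM^p\big\}\leq\frac{\Exp\|G\|_B^p}{(1+\varepsilon)^pM^p}\leq\Big(\frac{1+C_4/\log n}{1+\varepsilon}\Big)^p.$$
Choosing $C_0\geq 2C_4$ makes $(1+C_4/\log n)/(1+\varepsilon)\leq 1-\varepsilon/4$, so the right-hand side is at most $(1-\varepsilon/4)^{c_2\log n}\leq n^{-c\varepsilon}$ for a suitable universal $c>0$; for $\varepsilon<C_0/\log n$ the bound $2n^{-c\varepsilon}\geq 1$ is trivial. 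This settles one half of the two-sided estimate.

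The lower tail is the delicate half and I expect it to be the main obstacle. Markov applied to a negative power $\|\cdot\|_B^{-p}$ would give it at once, provided one knows $\Exp\|G\|_B^{-p}\leq(1+O(1/\log n))^p\mu^{-p}$; but this is not supplied by Theorem~\ref{th: superconcentration}, and the straightforward consequences of Lemma~\ref{l: aux 09} via Chebyshev's inequality or Paley--Zygmund only yield $\Prob\{\|G\|_B\leq(1-\varepsilon)M\}\lesssim(\varepsilon\log n)^{-1}$, far too weak. My plan is to supply the missing ingredient — superconcentration of $\|\cdot\|_B^{-p}$ for $p\in[1,c\log n]$ — by re-running the flat/spiky decomposition of Section~4 with $\partial_i(\|x\|_B^{-p})=-p\|x\|_B^{-p-1}\langle\grad_B(x),e_i\rangle$ in the role of $p\|x\|_B^{p-1}\langle\grad_B(x),e_i\rangle$: the gradient factor $\langle\grad_B(x),e_i\rangle$ and all of its monotonicity and $1$-unconditionality properties (Lemmas~\ref{l: unconditional grad}--\ref{l: grad is monotone}) are untouched; only the power of $\|x\|_B$ changes sign. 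Granting that variance bound, the previous two steps run with negative exponents and give $\Exp\|G\|_B^{-p}\leq(1+O(1/\log n))^p\mu^{-p}$ for $p$ up to $c\log n$, whence, with $p\asymp\log n$,
$$\Prob\big\{\|G\|_B\leq(1-\varepsilon)M\big\}=\Prob\big\{\|G\|_B^{-p}\geq(1-\varepsilon)^{-p}M^{-p}\big\}\leq(1-\varepsilon)^pM^p\,\Exp\|G\|_B^{-p}\leq\big((1-\varepsilon)(1+C_5/\log n)\big)^p\leq n^{-c\varepsilon}.$$
The hard point is the analogue of Lemma~\ref{l: spiky}: the weight $\|x\|_B^{-p-1}$ is largest precisely where $\|x\|_B$ is small, which is the region the $\ell$-position balancing identity of Lemma~\ref{l: ell equations} controls least transparently, so proving the spiky part negligible there is the crux. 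One remark that narrows the task: when $\Lip(\|\cdot\|_B)$ is small compared with $\mu\sqrt{\varepsilon/\log n}$, the bare Gaussian concentration inequality (Theorem~\ref{th: gauss concentration}) together with $M\asymp\mu$ already yields $\Prob\{\|G\|_B\leq(1-\varepsilon)M\}\leq n^{-c\varepsilon}$, so the new estimate is only needed in the complementary large-gradient regime — which is exactly where the hypothesis of Theorem~\ref{th: superconcentration} is in force.
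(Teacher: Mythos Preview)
Your reductions and your upper-tail argument are essentially the paper's: smooth approximation, the variance bound of Lemma~\ref{l: aux 09} for $p\asymp\log n$, and then a Markov/Chebyshev estimate on $\|G\|_B^p$. The moment-closeness step you extract by dyadic doubling is a correct and slightly different packaging of the same information the paper uses (the paper goes straight to Chebyshev on $\|G\|_B^p-\Exp\|G\|_B^p$ at a single $p\asymp\log n$, after first checking $\Med\|G\|_B^p$ and $\Exp\|G\|_B^p$ are comparable).

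The lower tail is where your proposal diverges and where it is incomplete. You plan to rerun the entire flat/spiky analysis of Section~4 for $\|\cdot\|_B^{-p}$ and you correctly flag the spiky estimate as the unresolved crux; indeed, the balancing identity of Lemma~\ref{l: ell equations} involves the weight $\|G\|_B$, not $\|G\|_B^{-1}$, and there is no obvious way to convert it. The paper bypasses all of this with a one-line black box: the theorem of Cordero-Erausquin, Fradelizi and Maurey \cite{CEFM} (the ``(B) conjecture'') says that for any origin-symmetric convex body $K$ the map $t\mapsto\gamma_n(e^tK)=\Prob\{\|G\|_B\leq e^t\Med\|G\|_B\}$ is log-concave in $t$. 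Once you know from the upper-tail estimate that $\Prob\{\|G\|_B\leq(1+\varepsilon_0)\Med\|G\|_B\}\geq 3/4$ for some $\varepsilon_0\lesssim 1/\log n$, log-concavity between $t=-\varepsilon$, $t=0$ and $t=\varepsilon_0$ gives
\[
\Prob\{\|G\|_B\leq(1-\varepsilon)\Med\|G\|_B\}\leq\exp\bigl(-c\,\varepsilon\log n\bigr)
\]
directly. So the lower tail costs nothing beyond the upper tail; no negative-moment superconcentration is needed, and the gap you anticipated simply does not arise.
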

\begin{proof}
As before, we assume that $n$ is large.
Note that, in view of Corollary~\ref{c:smooth}, our convex body $B$ can be approximated with
arbitrary precision by a smooth convex body in the $\ell$-position. Thus, without loss of generality
we can assume that $B$ itself is smooth.
Let $c'$ and $C'$ be the constants from Lemma~\ref{l: aux 09}, and set
$\widetilde c:=\min(c',\frac{1}{2\sqrt{C'}})$ and 
$p:=\widetilde c\log n$.
Note that Lemma~\ref{l: aux 09} then implies
$$\Var(\|G\|_B^p)=\Exp\|G\|_B^{2p}-\big(\Exp\|G\|_B^p\big)^2\leq\frac{1}{4}\Exp\|G\|_B^{2p},$$
whence $\Exp\|G\|_B^{2p}\leq \frac{4}{3}\big(\Exp\|G\|_B^p\big)^2$.
On the other hand,
$$\Var(\|G\|_B^p)\geq \frac{1}{2}\big(\Med\|G\|_B^p-\Exp\|G\|_B^p\big)^2,$$
which, together with the above inequality, gives
$$\big|\Med\|G\|_B^p-\Exp\|G\|_B^p\big|\leq \sqrt{\frac{2}{3}}\,\Exp\|G\|_B^p.$$
Now, for any $\varepsilon>0$ we get
\begin{align*}
\Prob\big\{\|G\|_B\geq (1+\varepsilon) \Med\|G\|_B\big\}
&= \Prob\big\{\|G\|_B^p\geq(1+\varepsilon)^p\,\Med\|G\|_B^p\big\}\\
&\leq \Prob\big\{\|G\|_B^p\geq 0.18(1+\varepsilon)^p\,\Exp\|G\|_B^p\big\}.
\end{align*}
Assume that $0.09(1+\varepsilon)^p\geq 1$.
Then, by Chebyshev's inequality and the above, we get
\begin{align*}
\Prob\big\{\|G\|_B\geq (1+\varepsilon) \Med\|G\|_B\big\}
&\leq\frac{\Var(\|G\|_B^p)}{0.09^2(1+\varepsilon)^{2p}\big(\Exp\|G\|_B^p\big)^2}\\
&<\frac{200\Var(\|G\|_B^p)}{(1+\varepsilon)^{2p}\Exp\|G\|_B^{2p}}\\
&< \frac{100}{(1+\varepsilon)^{2p}}.
\end{align*}
To get lower deviation estimates, we apply a theorem of D.~Cordero-Erausquin, M.~Frade\-lizi and B.~Maurey \cite{CEFM}.
According to the theorem, the function
$$t\to \Prob\big\{\|G\|_B\leq e^t \Med\|G\|_B\big\}$$
is log-concave on the real line.
Hence, assuming $\varepsilon_0>0$ is the number satisfying
$\Prob\big\{\|G\|_B\leq (1+\varepsilon_0) \Med\|G\|_B\big\}=3/4$, we get
\begin{align*}
\log\Prob\big\{\|G\|_B\leq (1-\varepsilon) \Med\|G\|_B\big\}
&\leq \log\Prob\big\{\|G\|_B\leq e^{-\varepsilon}\,\Med\|G\|_B\big\}\\
&\leq -\frac{\varepsilon}{\varepsilon_0} \Big(\log\Prob\big\{\|G\|_B\leq e^{\varepsilon_0} \Med\|G\|_B\big\}-\log\frac{1}{2}\Big)\\
&\leq -\frac{\varepsilon}{\varepsilon_0} \Big(\log\frac{3}{4}-\log\frac{1}{2}\Big),
\end{align*}
whence
$$\Prob\big\{\|G\|_B\leq (1-\varepsilon) \Med\|G\|_B\big\}\leq \exp\Big(-\frac{\varepsilon}{\varepsilon_0}\log\frac{3}{2}\Big)$$
for all $\varepsilon>0$. Note that the upper deviation estimates obtained in the first part of the proof
imply that $\varepsilon_0\leq \widetilde C(\log n)^{-1}$ for a universal constant $\widetilde C>0$.
Finally,
$$\Prob\big\{\big|\|G\|_B-\Med\|G\|_B\big|\geq \varepsilon \Med\|G\|_B\big\}
\leq \frac{100}{(1+\varepsilon)^{2\widetilde c\log n}}
+\exp\Big(-\widetilde C^{-1}\log\frac{3}{2}\;\varepsilon\log n\Big)$$
for all $\varepsilon\geq \frac{C''}{\log n}$. The result follows.
\end{proof}

The main result of this note --- Theorem~\ref{th: random ell} --- is obtained from Theorem~\ref{th: small deviations}
via a simple covering argument. We prefer to omit this (completely standard by now) part of the proof;
we refer, in particular, to \cite{MS 1986, Schechtman 2013, AGM 2015} for information on this matter.

\section{Proof of Proposition~\ref{p: John poor concentration}}\label{s:john}

In this section, we prove Proposition~\ref{p: John poor concentration} from the introduction,
by providing an example of a convex set in $\R^n$ in John's position
which shows that in general one cannot expect a better than quadratic dependence
on $\varepsilon$ in Theorem~\ref{th: random John}. In this connection
it is natural to recall an example by T.~Figiel
which highlights the limitations of the {\it existential} Dvoretzky theorem
(we refer to \cite[Lecture~3]{Schechtman 2013} for more details).
However, our example operates in a different regime as we bound the dimension of almost Euclidean
sections by $\varepsilon^2 \log n$ whereas Figiel's convex set admits $\varepsilon^2 n$--dimensional sections.

Let us start by stating two facts.
The first of the two lemmas below can be verified by combining a concentration inequality for the $\ell_\infty^r$-norm
of the Gaussian vector with a standard covering argument (see \cite{Schechtman 2007}),
whereas the second one is a simple (and rather crude)
corollary of Theorem~\ref{th: gauss concentration} (again, combined with a covering procedure).
\begin{lemma}\label{l: l infty almost euclid}
There are universal constants $c_{\ref{l: l infty almost euclid}}>0$ and $r_0\in\N$ with the following property:
Let $r\geq r_0$, $\varepsilon\in(0,1/2]$ and $1\leq k\leq c_{\ref{l: l infty almost euclid}}\varepsilon\log r/\log\frac{1}{\varepsilon}$.
Let $X_1,X_2,\dots,X_k$ be i.i.d.\ standard Gaussian vectors in $\R^r$, and set
$M_r:=\Med \|X_1\|_\infty$.
Then
$$\Prob\Big\{(1-\varepsilon)M_r\leq \Big\|\sum\nolimits_{i=1}^k \alpha_i X_i\Big\|_\infty\leq
(1+\varepsilon)M_r\;\;\mbox{ for all }(\alpha_1,\dots,\alpha_k)\in S^{k-1}\Big\}\geq\frac{7}{8}.
$$
\end{lemma}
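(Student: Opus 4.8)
The plan is to fix a direction on the sphere, use rotation invariance of Gaussian vectors to reduce to a one-point tail bound for $\|G\|_\infty$, and then upgrade to a uniform bound over $S^{k-1}$ by a net argument. First I would fix $\alpha=(\alpha_1,\dots,\alpha_k)\in S^{k-1}$ and observe that, since $\sum_i\alpha_i^2=1$, the vector $Y_\alpha:=\sum_{i\le k}\alpha_iX_i$ is a standard Gaussian vector in $\R^r$, so $\|Y_\alpha\|_\infty$ is distributed exactly as $\|G\|_\infty=\max_{j\le r}|g_j|$. I would also record the standard fact that $M_r=\Med\|G\|_\infty=(\sqrt2+o(1))\sqrt{\log r}$; it is the precise asymptotics (not merely $M_r\asymp\sqrt{\log r}$) that will be needed below.

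The crucial ingredient is a pair of sharp one-point tail estimates for $\|G\|_\infty$ exploiting its $\max$-structure. For the upper tail I would use a union bound over the coordinates together with $\Prob\{|g_1|\ge t\}\le e^{-t^2/2}$:
$$\Prob\big\{\|G\|_\infty\ge(1+\varepsilon)M_r\big\}\le r\exp\!\big(-(1+\varepsilon)^2M_r^2/2\big)\le Cr^{-c\varepsilon},$$
using $(1+\varepsilon)^2\ge1+2\varepsilon$ and $M_r^2\ge(2-o(1))\log r$ to absorb the lower-order factors. For the lower tail, independence of the coordinates gives
$$\Prob\big\{\|G\|_\infty\le(1-\varepsilon)M_r\big\}=\prod_{j\le r}\Prob\big\{|g_j|\le(1-\varepsilon)M_r\big\}\le\exp\!\big(-r\,\Prob\{|g_1|>(1-\varepsilon)M_r\}\big)\le\exp(-r^{c\varepsilon}),$$
which is super-polynomially small. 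This is precisely the step where the linear dependence on $\varepsilon$ appears: after the union bound over $r$ coordinates the Gaussian factor $\exp(-(1+\varepsilon)^2\log r)$ turns into $r^{-2\varepsilon+o(\varepsilon)}$, whereas the generic Lipschitz concentration of Theorem~\ref{th: gauss concentration} would only give $r^{-c\varepsilon^2}$ and hence the weaker range $k\lesssim\varepsilon^2\log r$.

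Next I would run a covering argument. Set $\delta:=\varepsilon/8$ and let $\Net\subset S^{k-1}$ be a $\delta$-net with $|\Net|\le(3/\delta)^k$; let $\Event$ be the event that $(1-\varepsilon/4)M_r\le\|Y_\beta\|_\infty\le(1+\varepsilon/4)M_r$ for all $\beta\in\Net$. By the two estimates and a union bound, $\Prob(\Event^c)\le(24/\varepsilon)^k\big(Cr^{-c\varepsilon}+\exp(-r^{c\varepsilon})\big)$; since $k\le c_{\ref{l: l infty almost euclid}}\varepsilon\log r/\log\frac1\varepsilon$ we have $(24/\varepsilon)^k=\exp\!\big(k\log(24/\varepsilon)\big)\le r^{C'c_{\ref{l: l infty almost euclid}}\varepsilon}$, so choosing $c_{\ref{l: l infty almost euclid}}$ small and $r_0$ large makes this at most $1/8$. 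Finally, on $\Event$, writing $M:=\sup_{\alpha\in S^{k-1}}\|Y_\alpha\|_\infty$ and, for given $\alpha$, choosing $\beta\in\Net$ with $\|\alpha-\beta\|_2\le\delta$, the bound $\big|\|Y_\alpha\|_\infty-\|Y_\beta\|_\infty\big|\le\|Y_{\alpha-\beta}\|_\infty\le\delta M$ yields $M\le(1-\delta)^{-1}(1+\varepsilon/4)M_r\le(1+\varepsilon)M_r$ for $\varepsilon\le1/2$, and then $\|Y_\alpha\|_\infty\ge(1-\varepsilon/4)M_r-\delta M\ge(1-\varepsilon)M_r$ for every $\alpha$. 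Hence $\Event$ is contained in the event of the statement, which therefore has probability at least $7/8$.

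The main obstacle is keeping the one-point estimates sharp enough: the entire improvement over the classical $\varepsilon^2$-bound rests on not losing the $r^{-c\varepsilon}$ rate to the logarithmic prefactors in the Gaussian tail, which forces the use of the asymptotics $M_r\sim\sqrt{2\log r}$ and of the coordinatewise union bound rather than a black-box concentration inequality. Mild care is also needed when $\varepsilon$ is bounded away from $0$ (where $\log\frac1\varepsilon\asymp1$ and the hypothesis merely says $k\lesssim\log r$), but there the claim is the classical randomized Dvoretzky bound for the cube and follows from the same argument.
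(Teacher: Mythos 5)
Your argument is correct and is exactly the route the paper indicates (it does not write out a proof but points to a sharp one-point concentration estimate for $\|G\|_\infty$ combined with a standard covering argument, as in \cite{Schechtman 2007}), with the right identification of where the linear-in-$\varepsilon$ rate comes from. The only point to watch is that the bound $r\exp(-(1+\varepsilon)^2M_r^2/2)\leq Cr^{-c\varepsilon}$ carries a residual $(\log r)^{O(1)}$ prefactor from $M_r^2=2\log r-\log\log r+O(1)$, which is genuinely absorbable only because the hypothesis $1\leq k\leq c\varepsilon\log r/\log\frac{1}{\varepsilon}$ forces $\varepsilon\log r\gtrsim\log\frac{1}{\varepsilon}$, hence $r^{c\varepsilon}\gg\log r$ in the non-vacuous regime.
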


\begin{lemma}\label{l: gaussian almost spherical}
There are universal constants $n_0>0$ and $c_{\ref{l: gaussian almost spherical}}>0$
such that for any $n\geq n_0$ and $k\leq n^{c_{\ref{l: gaussian almost spherical}}}$
the following holds:
Let $X_1,X_2,\dots,X_k$ be i.i.d.\ standard Gaussian vectors in $\R^n$. Then
$$\Prob\Big\{(1-n^{-c_{\ref{l: gaussian almost spherical}}})\sqrt{n}
\leq\Big\|\sum\nolimits_{i=1}^k \alpha_i X_i\Big\|_2\leq (1+n^{-c_{\ref{l: gaussian almost spherical}}})\sqrt{n}
\;\;\mbox{ for all }(\alpha_1,\dots,\alpha_k)\in S^{k-1}\Big\}
\geq\frac{7}{8}.$$
\end{lemma}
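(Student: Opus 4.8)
The statement is a very soft, ``lossy'' net estimate, so the plan is to combine Gaussian concentration for the Euclidean norm with a crude union bound over an $\varepsilon$-net, choosing the parameters so generously that the losses are absorbed into $n^{-c}$. First I would fix a small power, say $c_{\ref{l: gaussian almost spherical}}$ to be determined, and observe that for a \emph{fixed} unit vector $(\alpha_1,\dots,\alpha_k)\in S^{k-1}$ the random vector $\sum_i\alpha_i X_i$ is exactly a standard Gaussian vector in $\R^n$ (the $X_i$ are i.i.d.\ standard Gaussian, and $\sum_i\alpha_i X_i$ has independent $N(0,\sum_i\alpha_i^2)=N(0,1)$ coordinates). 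Hence by Theorem~\ref{th: gauss concentration} applied to the $1$-Lipschitz function $x\mapsto\|x\|_2$, together with $\Exp\|G\|_2\in[\sqrt{n-1},\sqrt n]$ (or simply $|\Exp\|G\|_2-\sqrt n|\le 1$), we get
$$\Prob\Big\{\big|\|\sum\nolimits_i\alpha_i X_i\|_2-\sqrt n\big|\geq t\sqrt n\Big\}\leq 2\exp(-t^2 n/4)$$
for all $t\ge n^{-1/4}$, say.

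Next I would choose a $\delta$-net $\Net$ of $S^{k-1}$ with $|\Net|\leq (3/\delta)^k$, for $\delta:=n^{-c_{\ref{l: gaussian almost spherical}}}$, and apply the single-vector bound with $t:=n^{-c_{\ref{l: gaussian almost spherical}}}$ at each point of $\Net$; the union bound gives a failure probability at most $2(3/\delta)^k\exp(-t^2n/4)=2\exp\big(k\log(3n^{c})-n^{1-2c}/4\big)$, which is $\le 1/16$ provided $k\le n^{c_{\ref{l: gaussian almost spherical}}}$ and $c_{\ref{l: gaussian almost spherical}}$ is a sufficiently small universal constant (so that $n^{1-2c}$ dominates $k\log(3n^c)\le n^{2c}\log(3n)$). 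The final step is the standard net-to-sphere transfer: for arbitrary $\alpha\in S^{k-1}$ pick $\alpha'\in\Net$ with $\|\alpha-\alpha'\|_2\le\delta$; since all the $X_i$ have, on the same high-probability event, $\|X_i\|_2\le 2\sqrt n$ (again Theorem~\ref{th: gauss concentration}, union over $k\le n$ vectors), the operator-norm-type bound $\|\sum_i(\alpha_i-\alpha_i')X_i\|_2$ is controlled by $\sup_{\beta\in S^{k-1}}\|\sum_i\beta_i X_i\|_2$ times $\delta$, and a routine approximation/maximality argument bootstraps this into $\sup_{S^{k-1}}\|\sum_i\alpha_i X_i\|_2\le(1+C\delta)\sqrt n$ and similarly $\inf_{S^{k-1}}\ge(1-C\delta)\sqrt n$; rescaling $c_{\ref{l: gaussian almost spherical}}$ to kill the constant $C$ finishes the proof.

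The only mild subtlety — and hence the ``hard part'' — is the net-to-sphere passage, because the supremum of $\|\sum_i\beta_i X_i\|_2$ over $S^{k-1}$ appears on both sides; this is handled in the usual way by writing $\sup_{S^{k-1}}\|\sum\beta_i X_i\|_2\le\max_{\Net}\|\sum\alpha_i' X_i\|_2+\delta\sup_{S^{k-1}}\|\sum\beta_i X_i\|_2$, and absorbing the last term since $\delta<1/2$. Everything else is a direct application of Theorem~\ref{th: gauss concentration} plus a union bound, with all constants chosen at the end so that the $n^{-c_{\ref{l: gaussian almost spherical}}}$ slack swallows every loss; no new ideas beyond those already used repeatedly in the paper are needed.
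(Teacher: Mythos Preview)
Your proposal is correct and matches the paper's own indicated approach: the paper does not give a proof of this lemma but states that it ``is a simple (and rather crude) corollary of Theorem~\ref{th: gauss concentration} (again, combined with a covering procedure),'' which is exactly the Gaussian concentration plus $\varepsilon$-net union bound plus standard net-to-sphere transfer that you outline. The parameter choices and the absorption argument for the supremum are all routine and correctly sketched.
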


\medskip

The following lemma is a trivial planimetric observation; we provide the proof for reader's convenience.

\begin{lemma}\label{l: intersection of ellipses}
Let $0<a<1<b$, and let a figure $F$ in the plane be given by
$$F:=B_2^2\cap \Big\{(x_1,x_2)\in\R^2:\,\frac{x_1^2}{a^2}+\frac{x_2^2}{b^2}\leq 1\Big\},$$
where $B_2^2$ is the unit Euclidean ball in the plane.
Then the Banach--Mazur distance from $F$ to $B_2^2$ can be estimated as
$$\dist(F,B_2^2)\geq \sqrt{\frac{2b^2-a^2b^2-1}{b^2-a^2}}=\sqrt{1+\frac{(b^2-1)(1-a^2)}{b^2-a^2}}.$$
\end{lemma}
\begin{proof}
Clearly, the convex figure is isometric to the intersection of the disk $B_2^2$
and ellipse $\Ell:=\{(x_1,x_2):\,a^2 x_1^2+b^2 x_2^2\leq 1\}$.
Further, the four points of intersection of the boundaries $\partial B_2^2$ and $\partial \Ell$
have coordinates
$$\bigg(\pm\sqrt{\frac{b^2-1}{b^2-a^2}},\pm\sqrt{\frac{1-a^2}{b^2-a^2}}\;\bigg).$$
Now, assume that $T$ is a linear transformation of $\R^2$ such that
$$\tfrac{1}{\dist(B_2^2\cap\Ell,B_2^2)}B_2^2\subset T(B_2^2\cap \Ell)\subset B_2^2.$$
In view of the symmetries in $B_2^2\cap\Ell$,
we can assume that $T$ is diagonal, with $Te_1=:\kappa e_1$ and $Te_2=:\beta e_2$
for some $\kappa<1<\beta$.
Since $T(\partial B_2^2\cap\partial\Ell)\subset B_2^2$, we have the inequality
$$\kappa^2\,\frac{b^2-1}{b^2-a^2}+\beta^2\,\frac{1-a^2}{b^2-a^2}\leq 1.$$
On the other hand, the distance $\dist(B_2^2\cap\Ell,B_2^2)$ is bounded from below by
$\max(\frac{1}{\kappa},\frac{b}{\beta})$, whence, for any $\lambda\in[0,1]$, we have
$$\dist(B_2^2\cap\Ell,B_2^2)^{-2}\leq \lambda \kappa^2+\frac{1-\lambda}{b^2}\beta^2.$$
Choose $\lambda:=s\frac{b^2-1}{b^2-a^2}$, where $s:=\frac{b^2-a^2}{2b^2-a^2b^2-1}$.
It is easy to check that in this case $1-\lambda=sb^2\frac{1-a^2}{b^2-a^2}$, so that
$$\dist(B_2^2\cap\Ell,B_2^2)^{-2}\leq s\bigg(\kappa^2\,\frac{b^2-1}{b^2-a^2}+\beta^2\,\frac{1-a^2}{b^2-a^2}\bigg)\leq s.$$
Thus,
$$\dist(B_2^2\cap\Ell,B_2^2)^2\geq \frac{2b^2-a^2b^2-1}{b^2-a^2}=1+\frac{(b^2-1)(1-a^2)}{b^2-a^2}.$$
\end{proof}

In the following statement, we estimate the extreme singular values of a standard rectangular Gaussian matrix.
The lemma is by no means new; however, we prefer to give an elementary proof based only on the
standard concentration inequalities and not involving any spectral theory.

\begin{lemma}\label{l: gaussian extreme}
There are universal constants $c_{\ref{l: gaussian extreme}},C_{\ref{l: gaussian extreme}}>0$
with the following property:
Let $C_{\ref{l: gaussian extreme}}\leq k\leq m$, and let $A$ be the $m\times k$ standard Gaussian
matrix. Then $\Prob\big\{s_{\max}(A)\leq \sqrt{m}+c_{\ref{l: gaussian extreme}}\sqrt{k}\big\}\leq \frac{1}{16}$
and $\Prob\big\{s_{\min}(A)\geq \sqrt{m}-c_{\ref{l: gaussian extreme}}\sqrt{k}\big\}\leq \frac{1}{16}$.
\end{lemma}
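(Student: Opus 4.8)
The plan is to reduce both estimates to a single greedy construction of a vector in the column space of $A$. Write $A=[A_1\mid\cdots\mid A_k]$, where $A_1,\dots,A_k$ are i.i.d.\ standard Gaussian vectors in $\R^m$, and note that for any signs $\epsilon\in\{-1,1\}^k$ the unit vector $\epsilon/\sqrt k$ gives
$$
s_{\max}(A)^2\ \geq\ \frac1k\Big\|\sum_{i=1}^k\epsilon_iA_i\Big\|_2^2\ \geq\ s_{\min}(A)^2 .
$$
So it suffices to choose signs for which $\big\|\sum_i\epsilon_iA_i\big\|_2^2$ is much larger than $km$ (for the $s_{\max}$ bound), respectively much smaller than $km$ (for the $s_{\min}$ bound).

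For $s_{\max}$ I would build the vector greedily: set $v_0:=0$ and, for $t=0,\dots,k-1$, put $\epsilon_{t+1}:=\sign\langle v_t,A_{t+1}\rangle$ and $v_{t+1}:=v_t+\epsilon_{t+1}A_{t+1}$, so that $\|v_{t+1}\|_2^2=\|v_t\|_2^2+\|A_{t+1}\|_2^2+2|\langle v_t,A_{t+1}\rangle|$ and, telescoping,
$$
\Big\|\sum_{i=1}^k\epsilon_iA_i\Big\|_2^2=\sum_{i=1}^k\|A_i\|_2^2+2\sum_{t=1}^{k-1}|\langle v_t,A_{t+1}\rangle| .
$$
Conditionally on $A_1,\dots,A_t$ the vector $v_t$ is fixed and $A_{t+1}$ is independent, so $\langle v_t,A_{t+1}\rangle$ is a centred Gaussian of variance $\|v_t\|_2^2$ and has expected absolute value $\sqrt{2/\pi}\,\|v_t\|_2$. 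Since $\|v_t\|_2^2\geq\sum_{s\leq t}\|A_s\|_2^2$, a $\chi^2$ lower-tail bound and a union bound give $\|v_t\|_2\geq\tfrac12\sqrt{tm}$ for all $t\leq k$ on an event of probability $\geq1-e^{-cm}$; on that event $\sum_{t=1}^{k-1}\sqrt{2/\pi}\,\|v_t\|_2\gtrsim\sqrt m\sum_{t=1}^{k-1}\sqrt t\gtrsim\sqrt m\,k^{3/2}$. A routine second-moment (martingale) estimate around these conditional means---the martingale part has variance $O(mk^2)$, negligible next to $\big(\sqrt m\,k^{3/2}\big)^2$ precisely because $k\geq C$---then shows $\sum_t|\langle v_t,A_{t+1}\rangle|\gtrsim\sqrt m\,k^{3/2}$ with probability $\geq1-\tfrac1{16}$. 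Combined with $\sum_i\|A_i\|_2^2=km+O(\sqrt{km})$ (with high probability), this yields $s_{\max}(A)^2\geq m+c\sqrt{mk}$, and since $k\leq m$ (so that $\sqrt{1+x}\geq1+\tfrac x4$ applies with $x=c\sqrt{k/m}\in[0,1]$) we obtain $s_{\max}(A)\geq\sqrt m+c'\sqrt k$ with the required probability.

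For $s_{\min}$ I would run the same construction with the reversed rule $\epsilon_{t+1}:=-\sign\langle v_t,A_{t+1}\rangle$, which gives $\big\|\sum_i\epsilon_iA_i\big\|_2^2=\sum_i\|A_i\|_2^2-2\sum_t|\langle v_t,A_{t+1}\rangle|$. Here one first reduces to the range $k\leq cm$ (for a small universal $c$) by passing to the submatrix formed by the first $\lfloor cm\rfloor$ columns: extending unit vectors of the smaller sphere by zeros shows the least singular value of that submatrix is at least $s_{\min}(A)$, so it suffices to bound its $s_{\min}$. In that range a routine induction (or stopping-time) argument shows that $\|v_t\|_2\asymp\sqrt{tm}$ persists for all $t\leq k$, the quantity subtracted at step $t$ being of order $\sqrt{tm}=o(tm)$; hence the same conditional-mean and second-moment computation gives $\sum_t|\langle v_t,A_{t+1}\rangle|\gtrsim\sqrt m\,k^{3/2}$ with high probability, so that $s_{\min}(A)^2\leq m-c\sqrt{mk}$ and, as before, $s_{\min}(A)\leq\sqrt m-c'\sqrt k$ with probability $\geq1-\tfrac1{16}$.

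The step I expect to be the real obstacle is the high-probability lower bound on $\sum_{t=1}^{k-1}|\langle v_t,A_{t+1}\rangle|$: the summands are dependent, since the signs---and hence the vectors $v_t$---are built from the earlier columns, so one must work in the natural filtration, identify the conditional means $\sqrt{2/\pi}\,\|v_t\|_2$, and control the resulting martingale fluctuation; the key point is that the main term has order $\sqrt m\,k^{3/2}$ while the fluctuation is only $O(\sqrt m\,k)$, so the hypothesis $k\geq C$ is exactly what makes it negligible. Everything else---the $\chi^2$ tail estimates for the $\|A_i\|_2^2$, the submatrix reduction in the $s_{\min}$ case, and the passage from squared-singular-value bounds to the stated ones---is routine, and the argument uses no spectral theory beyond the variational characterizations $s_{\max}(A)=\max_{u\in S^{k-1}}\|Au\|_2$ and $s_{\min}(A)=\min_{u\in S^{k-1}}\|Au\|_2$.
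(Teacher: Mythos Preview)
Your approach is essentially the paper's: the same greedy sign construction $\epsilon_{t+1}=\sign\langle v_t,A_{t+1}\rangle$ and the same telescoping expansion of $\|v_k\|_2^2$. The one difference worth noting is that the paper makes a simplification which dissolves what you flag as the ``real obstacle.'' Writing $g_i:=\|v_{i-1}\|_2^{-1}\langle A_i,v_{i-1}\rangle$, one observes that, conditioned on $A_1,\dots,A_{i-1}$, the variable $g_i$ is standard normal; hence the $g_i$'s are jointly \emph{independent} standard Gaussians. This replaces your filtration/martingale second-moment estimate on $\sum_t|\langle v_t,A_{t+1}\rangle|=\sum_t\|v_t\|_2\,|g_{t+1}|$ by a law-of-large-numbers bound such as $\sum_{t>k/2}|g_t|\geq k/8$, combined with the easy estimate $\|v_t\|_2\geq c\sqrt{mk}$ for $t\geq k/2$ (immediate from $\|v_t\|_2^2\geq\sum_{s\leq t}\|A_s\|_2^2$ in the $s_{\max}$ case). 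Your martingale route is correct and more robust, but the independence observation is cleaner and explains why the dependence you worry about is in fact not an issue. For $s_{\min}$ the paper only says the argument is ``very similar'' and gives no details; your reversed-sign rule together with the submatrix reduction to $k\leq cm$ (to keep $\|v_t\|_2\asymp\sqrt{tm}$ from collapsing) is a reasonable way to fill in that gap.
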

\begin{proof}
Let us prove only the first assertion of the lemma (the argument for $s_{\min}(A)$
is very similar). We assume that $k\leq m$, and that $k$ is sufficiently large.
Let $Y_1,Y_2,\dots,Y_k$ be i.i.d.\ standard Gaussian vectors in $\R^m$.
We set $\rho_1:=1$ and define random signs $\rho_2,\rho_3,\dots,\rho_k$ inductively as follows:
$$\rho_i:=\sign\big\langle Y_i,\sum_{j=1}^{i-1} \rho_j Y_j\big\rangle,\;\;\;i=2,3,\dots,k.$$
We shall estimate the norm of the linear combination $\sum_{i=1}^k \rho_i Y_i$.
Clearly, for any $u\leq k$ we have
\begin{align*}
\Big\|\sum_{i=1}^u \rho_i Y_i\Big\|_2^2&=\sum_{i=1}^u \|Y_i\|_2^2
+2\sum_{i=2}^u\big\langle \rho_i Y_i,\sum_{j=1}^{i-1} \rho_j Y_j\big\rangle\\
&=\sum_{i=1}^u \|Y_i\|_2^2
+2\sum_{i=2}^u\Big|\big\langle Y_i,\sum_{j=1}^{i-1} \rho_j Y_j\big\rangle\Big|\\
&=\sum_{i=1}^u \|Y_i\|_2^2
+2\sum_{i=2}^u\Big\| \sum_{j=1}^{i-1} \rho_j Y_j\Big\|_2 \,|g_i|,
\end{align*}
where $g_i:=\big\|\sum_{j=1}^{i-1}\rho_j Y_j\big\|_2^{-1}\big\langle Y_i,\sum_{j=1}^{i-1} \rho_j Y_j\big\rangle$
($i=2,3,\dots,k$) are standard Gaussian variables.
A rough estimate gives (provided that $k$ is sufficiently large):
$$\Prob\Big\{\Big\|\sum_{i=1}^u \rho_i Y_i\Big\|_2\geq \frac{\sqrt{mk}}{4}\;\;\mbox{ for all }\;\;u\geq \frac{k}{2}\Big\}\geq\frac{63}{64}.$$
Next, the principal observation is that, conditioned on any realization of $g_2,\dots,g_{i-1}$, the variable $g_i$
is distributed according to the normal law, whence $g_i$'s are jointly independent.
It follows that, provided that $k$ is sufficiently large,
$$\sum_{i=\lceil k/2\rceil+1}^k |g_i|\geq \frac{k}{8}$$
with probability at least $63/64$.
Together with the above estimates, this gives
$$\Big\|\sum_{i=1}^k \rho_i Y_i\Big\|_2^2\geq 
\sum_{i=1}^k \|Y_i\|_2^2+\frac{k\sqrt{mk}}{16}$$
with probability at least $31/32$.
Combined with Theorem~\ref{th: gauss concentration} applied to $\big(\sum_{i=1}^k \|Y_i\|_2^2\big)^{1/2}$
viewed as $1$-Lipschitz function of $mk$ i.i.d.\ standard Gaussian variables,
this yields
$$\Big\|\sum_{i=1}^k \rho_i Y_i\Big\|_2^2\geq 
mk+\frac{k\sqrt{mk}}{17}> k\bigg(\sqrt{m}+\frac{\sqrt{k}}{64}\bigg)^2$$
with probability at least $15/16$. It remain to note that, for the $m\times k$ Gaussian matrix $A$
with columns $Y_1,Y_2,\dots,Y_k$, we have $\sqrt{k}\,s_{\max}(A)\geq \big\|\sum_{i=1}^k \rho_i Y_i\big\|_2$
deterministically.
\end{proof}
\begin{rem}
Note that in the last lemma we estimate $s_{\max}$ {\it from below} and $s_{\min}$ from above.
A lower bound on $s_{\max}$ and upper bound on $s_{\min}$ of a random matrix with i.i.d.\ centered entries
can be derived as a corollary of the Marchenko--Pastur theorem for the limiting spectral distribution \cite{MP 1967}.
However, the Marchenko--Pastur theorem requires that the ratio $k/m$ converges to a fixed number,
and its applicability in the case $k=o(m)$ is unclear.
On the other hand, the above proof of the lemma is based on an elementary argument
which gives rather crude bounds but remains valid under very mild assumptions on
$k$, $m$.
\end{rem}

\begin{proof}[Proof of Proposition~\ref{p: John poor concentration}]
To simplify working with constants, we adopt the following convention in this proof:
by writing ``$a\ll b$'' we mean that $a\leq cb$ for some universal constant $c>0$ which can be made arbitrarily
small at expense of changing the constants in the final statement.

Let us fix a (large) $n$, and define $m:=\lfloor \Med\max_{i\leq n}g_i^2\rfloor$,
where $g_1,\dots,g_n$ are i.i.d.\ standard Gaussians.
Note that $m=O(\log n)$ (see, for example, \cite{Schechtman 2007} or \cite[p.~302]{DN 2003}).
Further, define two cylinders
\begin{align*}
B'&:=\big\{(x_1,x_2,\dots,x_n)\in\R^n:\,\max_{i\leq n-m}|x_i|\leq 1\big\}\\
B''&:=\Big\{(x_1,x_2,\dots,x_n)\in\R^n:\,\sum\nolimits_{i=n-m+1}^n x_i^2\leq 1\Big\},
\end{align*}
and set $B:=B'\cap B''$. It is easy to see that
$B_2^2\subset B$, and that $\pm e_i$ ($i=1,2,\dots,n$) are contact points of $\partial B$ and $\partial B_2^2$,
whence, by John's theorem \cite{John,Ball}, $B_2^2$ is the maximal volume ellipsoid inside $B$.
Let $\varepsilon\in\big((\log n)^{-1/2},c\big]$, $k:=\lceil c^{-1}\varepsilon^2\log n\rceil$ (where $c>0$ is a sufficiently
small universal constant whose value can be recovered from the proof), and set
$E:=\spn\{X_1,X_2,\dots,X_k\}$ where $X_1,X_2,\dots,X_k$ are i.i.d.\ standard Gaussian vectors in $\R^n$.
We will show that with probability at least $1/2$, the random section $B\cap E$ is not $(1+\varepsilon)$--Euclidean.

Set $r:=n-m$. Note that, by the definition of $k$, we have
$k\ll \varepsilon\log r/\log \frac{1}{\varepsilon}$. Then, applying Lemma~\ref{l: l infty almost euclid}
with $\varepsilon/2$ in place of $\varepsilon$,
we obtain
$$\Prob\Big\{(1-\varepsilon/2)M_r\leq \Big\|\sum\nolimits_{i=1}^k \alpha_i X_i\Big\|_{B'}\leq
(1+\varepsilon/2)M_r\;\;\mbox{ for all }\;\;(\alpha_1,\dots,\alpha_k)\in S^{k-1}\Big\}\geq\frac{7}{8},$$
where $M_r=\Med\max_{i\leq r}|\langle X_1,e_i\rangle|=\Med \|X_1\|_{B'}$.
Next, observe that, in view of the choice of $m$ and asymptotic estimates of the median of the $\|\cdot\|_{\infty}$--norm
of Gaussian vectors (see, in particular, \cite[p.~302]{DN 2003}), we have
$\big|\sqrt{m}/M_r-1\big|\ll (\log n)^{-1/2}$. Together with Lemma~\ref{l: gaussian almost spherical},
the above relation and the assumption that $\varepsilon$ is small, gives
\begin{align*}
\Prob\Big\{&(1-\varepsilon)\sqrt{n/m}\leq \Big\|\sum\nolimits_{i=1}^k \alpha_i X_i\Big\|_{B'}^{-1}
\,\cdot \Big\|\sum\nolimits_{i=1}^k \alpha_i X_i\Big\|_{2}\leq
(1+\varepsilon)\sqrt{n/m}\\
&\mbox{for all }\;\;(\alpha_1,\dots,\alpha_k)\in S^{k-1}\Big\}\geq\frac{3}{4},
\end{align*}
or, geometrically,
\begin{equation}\label{eq: aux5987}
\Prob\big\{(1-\varepsilon)\sqrt{n/m}\,B_2^n\subset B'\cap E
\subset (1+\varepsilon)\sqrt{n/m}\,B_2^n\big\}\geq\frac{3}{4}.
\end{equation}

Further, the intersection $B''\cap E$ is clearly an ellipsoid.
Let $Y_1,Y_2,\dots,Y_k$ be the orthogonal projections of $X_i$'s onto the linear span of $\{e_{n-m+1},\dots,e_n\}$,
and let $A$ be the $m\times k$ random Gaussian matrix with columns $Y_1,\dots,Y_k$.
Note that
$$\sup\Big\{\Big\|\sum\nolimits_{i=1}^k \alpha_i Y_i\Big\|_2^{-1}
\,\cdot \Big\|\sum\nolimits_{i=1}^k \alpha_i X_i\Big\|_{2}:\,(\alpha_1,\dots,\alpha_k)\in S^{k-1}\Big\}$$
is the length of the largest semi-axis of $B''\cap E$ (let us denote the corresponding random vector by $W$,
i.e.\ $W\in \partial (B''\cap E)$ is the largest vector in $\R^n$ with the end-point on the boundary of $B''\cap E$).
Similarly, we let $Z$ be the smallest semi-axis of $B''\cap E$,
i.e.\ the shortest vector in $\R^n$ with the end-point on the boundary of $B''\cap E$.
In view of Lemma~\ref{l: gaussian almost spherical}, we have
$$\Prob\big\{\|W\|_2\geq (1-n^{-c'})s_{\min}(A)^{-1}\sqrt{n}
\;\;\mbox{and}\;\;\|Z\|_2\leq (1+n^{-c'})s_{\max}(A)^{-1}\sqrt{n}\big\}\geq \frac{7}{8}$$
for some universal constant $c'>0$.
Hence, by Lemma~\ref{l: gaussian extreme} we get
\begin{equation}\label{eq: aux404}
\Prob\Big\{\|W\|_2\geq \frac{\sqrt{n}}{\sqrt{m}
-c''\sqrt{k}}\;\;\mbox{ and }\;\;
\|Z\|_2\leq \frac{\sqrt{n}}{\sqrt{m}+c''\sqrt{k}}
\Big\}\geq \frac{3}{4},
\end{equation}
where $c''>0$ is a universal constant.
Denote by $\widetilde E$ the (random) $2$-dimensional span of $W$ and $Z$.
In view of \eqref{eq: aux5987}, with probability at least $3/4$ the random figure
$B\cap \widetilde E=(B'\cap \widetilde E)\cap (B''\cap \widetilde E)$ is at the distance at most $\frac{1+\varepsilon}{1-\varepsilon}$ from
$(\sqrt{n/m}B_2^n)\cap B''\cap \widetilde E$. On the other hand, applying Lemma~\ref{l: intersection of ellipses},
we get
$$\dist\big((\sqrt{n/m}B_2^n)\cap B''\cap \widetilde E,B_2^2\big)^2\geq
1+\frac{(\|W\|_2^2-n/m)(n/m-\|Z\|_2^2)}{(n/m)(\|W\|_2^2-\|Z\|_2^2)}.$$
Note that, conditioned on the event $\|W\|_2\geq \frac{\sqrt{n}}{\sqrt{m}
-c''\sqrt{k}}$ and $\|Z\|_2\leq \frac{\sqrt{n}}{\sqrt{m}+c''\sqrt{k}}$, we have
\begin{align*}
\frac{(\|W\|_2^2-n/m)(n/m-\|Z\|_2^2)}{(n/m)(\|W\|_2^2-\|Z\|_2^2)}
&\geq \frac{(\|W\|_2-\sqrt{n/m})(\sqrt{n/m}-\|Z\|_2)}{2\sqrt{n/m}(\|W\|_2-\|Z\|_2)}\\
&\geq\bar c\,\sqrt{\frac{k}{m}}\gg \varepsilon,
\end{align*}
as long as $c$ is chosen to be sufficiently small.
Hence, in view of \eqref{eq: aux404},
$$\Prob\big\{\dist\big((\sqrt{n/m}B_2^n)\cap B''\cap \widetilde E,B_2^2\big)\geq
1+4\varepsilon\big\}\geq 3/4.$$
Finally,
$$\Prob\big\{\dist\big(B\cap E,B_2^k\big)\geq
(1+4\varepsilon)(1-\varepsilon)/(1+\varepsilon)\big\}\geq 1/2,$$
and the result follows.
\end{proof}

\section{Remarks and open questions}

\begin{itemize}

\item A question of importance is whether the assertion of Theorem~\ref{th: random ell} holds without assuming
existence of a $1$-unconditional basis in $(\R^n,\|\cdot\|_B)$. This seems quite plausible,
although absence of a preferred orthogonal basis in this case suggests that Talagrand's $L_1\text{--}L_2$ bound
will likely be inapplicable. In our paper the phenomenon of superconcentration
is presented as a black box: we do not attempt to investigate
the matters that lie beneath the $L_1\text{--}L_2$ bound. Proving the assertion
of the theorem in full generality should require new tools.

\item The assumption of Theorem~\ref{th: random ell} that the convex body $B$ is in the $\ell$-position
is not something absolutely necessary. Rather, what we need is a sort of a ``balancing''
condition for the norm $\|\cdot\|_B$. In particular, it is natural to expect that the assertion of the main theorem
remains valid if the $\ell$-position is replaced with the one given by
$$\Exp\|G\|_B^q=1\quad \mbox{and}\quad
1=|\det \Id_n|=\sup\big\{|\det U|:\,U\in\R^{n\times n},\,\Exp\|G\|_{U^{-1}(B)}^q\leq 1\big\}$$
for some fixed $q\geq 1$.
One may further ask what are other {\it natural} positions in which the superconcentration
phenomenon guarantees better than quadratic dependence on $\varepsilon$ in the randomized Dvoretzky theorem.

\end{itemize}

\bigskip

{\bf Acknowledgments.} The research is partially supported by the Simons Foundation.
I would like to thank Ramon van Handel for an interesting discussion.
I am also grateful to Nicole Tomczak-Jaegermann, Alexander Litvak and Assaf Naor for valuable remarks.

\end{document}